\newfont{\cyr}{wncyr10}
\newcommand{\LL}{\mathbf{L}}
\newcommand{\HH}{\mathrm{H}}
\newcommand{\BB}{\mathrm{B}}
\newcommand{\ZZ}{\mathrm{Z}}
\theoremstyle{plain}
\newtheorem{thm}{Theorem}[section]
\newtheorem{prop}[thm]{Proposition}
\newtheorem{lemma}[thm]{Lemma}
\theoremstyle{definition}
\newtheorem{dfn}[thm]{Definition}
\newtheorem{hypo}{Hypothesis}
\theoremstyle{remark}
\newtheorem{rem}[thm]{Remark}
\begin{document}

\title[Obstructions]{Obstructions  for Deformations of Complexes}
\author[F. Bleher]{Frauke M. Bleher*}
\thanks{*Supported by 
NSA Grant H98230-06-1-0021 and NSF Grant  DMS0651332.}
\address{F.B.: Department of Mathematics\\University of Iowa\\
Iowa City, IA 52242-1419, U.S.A.}
\email{fbleher@math.uiowa.edu}

\author[T. Chinburg]{Ted Chinburg**}
\thanks{**Supported  by NSF Grants  DMS0500106
and DMS0801030. }
\address{T.C.: Department of Mathematics\\University of
Pennsylvania\\Philadelphia, PA
19104-6395, U.S.A.}
\email{ted@math.upenn.edu}
\subjclass[2000]{Primary 11F80; Secondary 20E18, 18E30, 18G40}
\keywords{Versal and universal deformations, derived categories, obstructions, spectral sequences}

\date{September 10, 2010}

\begin{abstract}
We develop two approaches to obstruction theory for 
deformations of 
derived isomorphism classes of complexes $Z^\bullet$ of modules for a profinite group $G$ over a complete local Noetherian ring $A$ of positive residue characteristic $\ell$.  
\end{abstract}

\maketitle

\setcounter{tocdepth}{1}
\tableofcontents


\section{Introduction}
\label{s:intro}
\setcounter{equation}{0}

Two basic tools of deformation theory are obstructions and parameterizations of infinitesimal deformations.  Obstructions determine when an object has an infinitesimal deformation.  When such an obstruction vanishes, one would like to parameterize all such infinitesimal deformations. In this paper we develop these tools in the context of
deforming derived isomorphism classes of complexes $Z^\bullet$ of modules for a profinite group $G$ over a complete local Noetherian ring $A$ 
having a fixed residue field $k$ of positive characteristic $\ell$.

The infinitesimal
deformation problem we consider has to do with lifting the isomorphism class
of $Z^\bullet$ in the derived category $D^-(A[[G]])$
of bounded above complexes of pseudocompact $A[[G]]$-modules to a class in $D^-(A'[[G]])$
when $A' \to A$ is a surjection of complete local Noetherian rings
whose kernel $J$ has square $0$.  The precise definition of the
deformation functor we consider is given in \S  \ref{s:lifts}.  

We give two different
approaches to obstruction theory.
The first, more naive, method proceeds by first replacing $Z^\bullet$ by a bounded
above complex of topologically free pseudocompact $A[[G]]$-modules. One can then separately lift each term 
of  $Z^\bullet$  to an $A'[[G]]$-module.  By considering the obstruction to 
lifting the boundaries of $Z^\bullet$ so as to obtain a complex of $A'[[G]]$-modules,
one arrives at a lifting obstruction $\omega(Z^\bullet,A')$ in $\mathrm{Ext}^2_{D^-(A[[G]])}(Z^\bullet,J\hat{\otimes}^{\LL}_{A} Z^\bullet)$.  Here $\hat{\otimes}^{\LL}$
is the left derived tensor product discussed in Remark \ref{rem:leftderivedtensor}. 

The second method uses a construction of Gabber and a suggestion
of Illusie.  This interprets the obstruction to
lifting $Z^\bullet$ as the image of a certain canonical
element under a boundary map in a spectral sequence which
computes $\mathrm{Ext}$ groups over $D^{-}(A'[[G]])$ via  $\mathrm{Ext}$ groups over $D^{-}(A[[G]])$ and $\mathrm{Tor}^{A'}$ complexes. We will describe
this in more detail below.

When the lifting obstruction vanishes, each of the two above methods describes all local
isomorphism classes of lifts of $Z^\bullet $ over $A'$  as a principal homogeneous space for $\mathrm{Ext}^1_{D^-(A[[G]])}(Z^\bullet,J\hat{\otimes}^{\LL}_{A} Z^\bullet)$.
 The spectral
sequence method has the advantage of identifying this principal homogeneous
space as a particular coset of 
$\mathrm{Ext}^1_{D^-(A[[G]])}(Z^\bullet,J\hat{\otimes}^{\LL}_{A} Z^\bullet)$ inside
$\mathrm{Ext}^1_{D^-(A'[[G]])}(Z^\bullet,J\hat{\otimes}^{\LL}_{A} Z^\bullet)$.  This 
identifies the local deformation functor as a subfunctor of a functor defined
by  $\mathrm{Ext}^1$ groups
over $D^{-}(A'[[G]])$.  
 The spectral sequence also gives a natural
filtration of $\mathrm{Ext}^1_{D^-(A'[[G]])}(Z^\bullet,J\hat{\otimes}^{\LL}_{A} Z^\bullet)$.
We obtain an interpretation of the last two terms in this filtration via exact sequences
of complexes which satisfy additional conditions.  

The spectral sequence we study is  
\begin{equation}
\label{eq:nightmare}
E_2^{p,q}=\mathrm{Ext}^p_{D^-(A[[G]])}(\mathcal{H}^{-q}(A \hat{\otimes}_{A'}^{\LL} Z^\bullet),
J\hat{\otimes}_A^{\LL}Z^\bullet)\;\Longrightarrow\;
\mathrm{Ext}^{p+q}_{D^-(A'[[G]])}(Z^\bullet, J\hat{\otimes}^{\LL}_AZ^{\bullet}).
\end{equation}
Here $\mathcal{H}^{-q}(A \hat{\otimes}_{A'}^{\LL} Z^\bullet)$ is a Tor complex 
whose $j^{\mathrm{th}}$ term is $\mathrm{Tor}_q^{A'}(Z^j,A)$ (see
Definition \ref{def:spectralseqA1}).
We will show in Theorem \ref{thm:obstructions} that the lifting obstruction  
$\omega(Z^\bullet,A')$ is the image under 
$d_2^{0,1}:E_2^{0,1} \to E_2^{2,0}$
of a canonical element $\iota$ in $E_2^{0,1}$.  In Theorem \ref{thm:bigobstructionthm}
(see also Lemmas \ref{lem:gabberconstruct} and \ref{lem:gabberlemma1}),
Gabber's construction will be shown to arise from the exact sequence of low degree terms
\begin{equation}
\label{eq:bignightmare}
0 \to E_\infty^{1,0} \to F_{II}^0 \to
E_2^{0,1}/W_2^{0,1} \xrightarrow{\overline{d_2^{0,1}}} E_2^{2,0}
\end{equation}
where $F_{II}^0  = F_{II}^0\, \mathrm{Ext}^1_{D^-(A'[[G]])}(Z^\bullet,J\hat{\otimes}^{\LL}_{A} Z^\bullet)$
 is the second to last term in the second filtration of the total cohomology of a bicomplex whose 
first total cohomology group is 
$\mathrm{Ext}^1_{D^-(A'[[G]])}(Z^\bullet,J\hat{\otimes}^{\LL}_{A} Z^\bullet)$ 
and $E_\infty^{0,1}=\mathrm{Ker}(d_2^{0,1})/W_2^{0,1}$ (see Definition \ref{def:spectralseqB1}).
We will  interpret $F_{II}^0$ as the set of extension classes arising from short
exact sequences of bounded above complexes of pseudocompact $A'[[G]]$-modules
\begin{equation}
\label{eq:bigsleep}
0 \to X^\bullet \to Y^\bullet \to Z^\bullet \to 0
\end{equation}
in which $X^\bullet$ is annihilated by $J$ and isomorphic to $J\hat{\otimes}^{\LL}_{A} Z^\bullet$
in $D^-(A[[G]])$. 
We will show in Lemma \ref{lem:extra} that if $(Z^\bullet,\zeta)$ has a lift over $A'$, then the 
local isomorphism class of every lift of $(Z^\bullet,\zeta)$ over $A'$ contains
a lift $(Y^\bullet,\upsilon)$ such that $Y^\bullet$ occurs as the middle term of a short exact sequence  
of the form $(\ref{eq:bigsleep})$.
We will show  in Theorem \ref{thm:obstructions} 
that if a lift of $(Z^\bullet,\zeta)$ over $A'$ exists, then the set of all local isomorphism classes 
of such lifts is in bijection with the full preimage of $\iota+W_2^{0,1}$ 
under the map $F_{II}^0\to E_2^{0,1}/W_2^{0,1}$ in $(\ref{eq:bignightmare})$.
This proves that the set of all local isomorphism classes of such lifts is a principal homogeneous 
space for $E_{\infty}^{1,0}$ and it gives a description of the  local isomorphism classes of
lifts of $(Z^\bullet,\zeta)$ over $A'$ in terms of classes in $F_{II}^0\subset
\mathrm{Ext}^1_{D^-(A'[[G]])}(Z^\bullet, J\hat{\otimes}^{\LL}_AZ^{\bullet})$.
Moreover, if a lift of $(Z^\bullet,\zeta)$ over $A'$ exists,  we will show that
$E_2^{p,0} = E_\infty^{p,0}$ for all $p$.  This partial degeneration is stronger than what is implied by
the naive method, which deals only with the case $p = 1$.

We now describe the sections of this paper.

In \S \ref{s:lifts} we recall the definitions and notations needed to state the main
result of \cite{bcderived} 
concerning the existence of versal and universal deformations 
of derived isomorphism classes 
of bounded complexes $V^\bullet$ in $D^-(k[[G]])$.
When $V^\bullet$ has only one non-zero term, this is the deformation
theory of continuous $G$-modules developed by Mazur in \cite{maz1}
using work of Schlessinger in  \cite{Sch}.
We also define local isomorphism classes of lifts over $A'$ of 
complexes $Z^\bullet$ in $D^-(A[[G]])$ 
relative to a surjection of complete local Noetherian rings $A'\to A$ with residue field $k$
having a square zero kernel.

The naive approach to obstruction theory is given in \S \ref{ss:naive}.
An outline of the spectral sequence approach, beginning with the
case of modules rather than complexes, is given in \S \ref{ss:spectralseqroad}.
The details of this approach for complexes are developed in \S  \ref{ss:spectralseq} - \S \ref{ss:aut}.  
The two methods are compared in \S \ref{ss:compare}.

The results of this paper are used in \cite{finiteness} to study a new finiteness
problem concerning deformations of arithmetically defined Galois
modules.  The particular result needed in \cite{finiteness} is Proposition \ref{prop:prop},
which shows that to determine versal deformations,
one can take the quotient of $G$ by any closed normal pro-prime-to-$\ell$ group which
acts trivially on $V^\bullet$ where $\ell$ is the characteristic of $k$.

\medbreak
\noindent {\bf Acknowledgments:}
The authors would like to thank
Ofer Gabber for explaining  his approach to obstruction theory and
for many helpful comments. 
The authors would also like to thank Luc Illusie for many valuable discussions without
which this paper would not have been possible. It was Illusie's idea to ask  Gabber
about obstructions, and he also suggested 
the idea of formulating Gabber's obstruction theory
in terms of spectral sequences.
The authors would also like to thank the Banff International Research Station for support 
during the preparation of part of this paper. 
\medbreak


\section{Quasi-lifts and deformation functors}
\label{s:lifts}
\setcounter{equation}{0}

Let $G$ be a profinite group, let $k$ be a field of positive characteristic $\ell$,
and let $W$ be a complete local commutative Noetherian ring with residue field $k$.
Define
$\hat{\mathcal{C}}$ to be the category of complete local commutative Noetherian 
$W$-algebras
with residue field $k$. The morphisms in $\hat{\mathcal{C}}$ are 
continuous $W$-algebra
homomorphisms that induce the identity on $k$.
Let $\mathcal{C}$ be the subcategory of  Artinian objects in
$\hat{\mathcal{C}}$.
If $R \in \mathrm{Ob}(\hat{\mathcal{C}})$, let $R[[G]]$ be the completed group algebra of the usual
abstract group algebra $R[G]$ of $G$ over  $R$, i.e. $R[[G]]$ is the projective limit of the ordinary group algebras $R[G/U]$ as $U$ ranges over the open normal subgroups of $G$. 
 
\begin{dfn}
\label{def:pseudocompact}
A topological ring $\Lambda$ is called a \emph{pseudocompact ring} if $\Lambda$ is complete 
and Hausdorff and admits a basis of open neighborhoods of $0$ consisting of two-sided ideals $J$
for which $\Lambda/J$ is an Artinian ring.

Suppose $\Lambda$ is a pseudocompact ring. A complete Hausdorff topological 
$\Lambda$-module $M$ is said to be a \emph{pseudocompact $\Lambda$-module}
if $M$ has a basis of open neighborhoods  of $0$ consisting of submodules $N$ for which
$M/N$ has finite length as $\Lambda$-module. We denote by $\mathrm{PCMod}(\Lambda)$ 
the category of pseudocompact $\Lambda$-modules. (If not stated otherwise, our modules are
left modules.)

A pseudocompact $\Lambda$-module $M$ is said to be \emph{topologically free} on a set
$X=\{x_i\}_{i\in I}$ if $M$ is isomorphic to the product of a family $(\Lambda_i)_{i\in I}$ where
$\Lambda_i=\Lambda$ for all $i$.

Suppose $R$ is a commutative pseudocompact ring. A complete Hausdorff topological ring
$\Lambda$ is called a \emph{pseudocompact $R$-algebra} if $\Lambda$ is an $R$-algebra
in the usual sense, and if $\Lambda$ admits a basis of open neighborhoods of $0$ consisting of 
two-sided ideals $J$ for which $\Lambda/J$ has finite length as $R$-module. 

Suppose $\Lambda$ is a pseudocompact $R$-algebra, and let $\hat{\otimes}_{\Lambda}$ denote 
the completed tensor product in the category $\mathrm{PCMod}(\Lambda)$ (see \cite[\S 2]{brumer}).
If $M$ is a right (resp. left) pseudocompact $\Lambda$-module, then $M\hat{\otimes}_\Lambda- $ 
(resp. $-\hat{\otimes}_\Lambda M$) is a right exact functor.
Moreover,  $M$ is said to be  
\emph{topologically flat}, if the functor $M\hat{\otimes}_{\Lambda}-$ (resp. $-\hat{\otimes}_\Lambda M$)
is exact.
\end{dfn}

\begin{rem}
\label{rem:pseudocompact}
Pseudocompact rings, algebras and modules have been studied, for example, in 
\cite{ga1,ga2,brumer}. The following statements can be found in these references.
Suppose $\Lambda$ is a pseudocompact ring. 
\begin{enumerate}
\item[(i)] The ring $\Lambda$ is the projective limit 
of Artinian quotient rings having the discrete topology.
A $\Lambda$-module is pseudocompact if and only if it is the projective limit of 
$\Lambda$-modules of finite length having the discrete topology. 
The category $\mathrm{PCMod}(\Lambda)$ is an abelian
category with exact projective limits. 
\item[(ii)]  Every topologically free pseudocompact $\Lambda$-module is a projective object in 
$\mathrm{PCMod}(\Lambda)$, and every pseudocompact
$\Lambda$-module is the quotient of a topologically free $\Lambda$-module. Hence
$\mathrm{PCMod}(\Lambda)$ has enough projective objects. 
\item[(iii)] Every pseudocompact $R$-algebra is a pseudocompact ring, and a module over 
a pseudocompact $R$-algebra has finite length if and only if it has finite length as $R$-module.
\item[(iv)] Suppose $\Lambda$ is a pseudocompact $R$-algebra, and $M$ and $N$ are 
pseudocompact $\Lambda$-modules. Then we define the right derived functors
$\mathrm{Ext}^n_{\Lambda}(M,N)$ by using a projective resolution of $M$. 
\item[(v)]  Suppose $R\in\mathrm{Ob}(\hat{\mathcal{C}})$. Then $R$ is a pseudocompact 
ring, and $R[[G]]$ is a pseudocompact $R$-algebra. 
\end{enumerate}
\end{rem}

\begin{rem}
\label{rem:extrafree}
Let $R$ be an object in $\hat{\mathcal{C}}$ and let $m_R$ be its maximal ideal. Suppose 
$[(R/m_R^i)X_i]$ is an abstractly free $(R/m_R^i)$-module on the finite topological space $X_i$
for all $i$,
and that $\{X_i\}_i$ forms an inverse system. Define $X=\displaystyle \lim_{\stackrel{
\longleftarrow}{i}} X_i$ and $R[[X]] = \displaystyle \lim_{\stackrel{\longleftarrow}{i}} \,
[(R/m_R^i) X_i]$. Then
$R[[X]]$ is a topologically free pseudocompact $R$-module on $X$. In particular,
every topologically free pseudocompact $R[[G]]$-module is a topologically free 
pseudocompact $R$-module. 
\end{rem}

\begin{rem}
\label{rem:dumber}
Suppose $R$ is an object in $\hat{\mathcal{C}}$ and $\Lambda=R$ or $R[[G]]$. Let $M$ be a pseudocompact right (resp. left) $\Lambda$-module. 
\begin{enumerate}
\item[(i)]
If $M$ is finitely generated as a pseudocompact $\Lambda$-module, it follows from
\cite[Lemma 2.1(ii)]{brumer} that the functors
$M \otimes_\Lambda -$ and $M\hat{\otimes}_\Lambda -$ (resp. $-\otimes_\Lambda M$ and 
$-\hat{\otimes}_\Lambda M$) are naturally isomorphic.
\item[(ii)]
By \cite[Lemma 2.1(iii)]{brumer} and \cite[Prop. 3.1]{brumer}, $M$ is topologically flat if and only 
if $M$ is projective.
\item[(iii)]
If $\Lambda=R$ and $M$ is a pseudocompact $R$-module,
it follows from \cite[Proof of Prop. 0.3.7]{ga2} and \cite[Cor. 0.3.8]{ga2} that $M$ is 
topologically flat if and only if $M$ is topologically free if and only if $M$ is abstractly flat.
In particular, if $R$ is Artinian, a pseudocompact $R$-module is topologically flat  if and only
if it is abstractly free.
\end{enumerate}
\end{rem}

If $\Lambda$ is a pseudocompact ring, let $C^-(\Lambda)$
be the abelian category of complexes of pseudocompact $\Lambda$-modules that are bounded above, let $K^-(\Lambda)$ be the homotopy category of $C^-(\Lambda)$, and
let $D^-(\Lambda)$ be the derived category of $K^-(\Lambda)$. 
Let $[1]$ denote the translation functor on $C^-(\Lambda)$ (resp. $K^-(\Lambda)$, 
resp. $D^-(\Lambda)$), 
i.e. $[1]$ shifts complexes
one place to the left and changes the sign of the differential.
Note that a homomorphism in $C^-(\Lambda)$
is a quasi-isomorphism if and only if the induced homomorphisms on 
all the cohomology groups are bijective.

\begin{rem}
\label{rem:leftderivedtensor}
Let  $X^\bullet, Y^\bullet\in \mathrm{Ob}(K^-(R[[G]]))$ and consider the double complex $K^{\bullet,\bullet}$ of pseudocompact $R[[G]]$-modules with
$K^{p,q}=(X^p\hat{\otimes}_RY^q)$ and diagonal $G$-action. We define the total tensor product $X^\bullet\hat{\otimes}_R Y^\bullet$ to be the simple complex associated to $K^{\bullet,\bullet}$, i.e.
$$(X^\bullet\hat{\otimes}_RY^\bullet)^n=\bigoplus_{p+q=n}X^p\hat{\otimes}_RY^q$$
whose differential is $d(x\,\hat{\otimes}\, y)=d_X(x)\,\hat{\otimes} \,y + (-1)^x \,x\,\hat{\otimes} \,d_Y(y)$
for $x\,\hat{\otimes}\,y\in K^{p,q}$.
Since homotopies carry over the completed tensor product, we have a functor
$$\hat{\otimes}_R : K^-(R[[G]])\times K^-(R[[G]])\to K^-(R[[G]]).$$
Using \cite[Thm. 2.2 of Chap. 2 \S2]{verdier}, we see that there is a well-defined
left derived completed tensor product $\hat{\otimes}^{\LL}_R$. Moreover, if
$X^\bullet$ and $Y^\bullet$ are as above, then $X^\bullet\hat{\otimes}^{\LL}_R Y^\bullet$
may be computed in  $D^-(R[[G]])$ in the following way.  
Take a bounded above complex ${Y'}^\bullet$ of topologically flat pseudocompact $R[[G]]$-modules
with a quasi-isomorphism ${Y'}^\bullet\to Y^\bullet$ in $K^-(R[[G]])$. Then this quasi-isomorphism 
induces an isomorphism between
$X^\bullet\hat{\otimes}_R{Y'}^\bullet$ and $X^\bullet\hat{\otimes}^{\LL}_RY^\bullet$ in $D^-(R[[G]])$.
\end{rem}

\begin{dfn}
\label{def:fintor}
We will say that a complex $M^\bullet$ in $K^-(R[[G]])$
has \emph{finite pseudocompact $R$-tor dimension}, 
if there exists an integer $N$ such that for all pseudocompact
$R$-modules $S$, and for all integers $i<N$, ${\HH}^i(S\hat{\otimes}^{\LL}_R M^\bullet)=0$.
If we want to emphasize the integer $N$ in this definition, we say $M^\bullet$ has 
\emph{finite pseudocompact $R$-tor dimension at $N$}.
\end{dfn}

\begin{rem}
\label{rem:dumbdumb}
Suppose $M^\bullet$ is a complex in $K^-([[RG]])$ of topologically flat, hence topologically free, 
pseudocompact $R$-modules that has finite pseudocompact $R$-tor dimension
at $N$. Then the bounded complex ${M'}^\bullet$, which is
obtained from $M^\bullet$ by replacing $M^N$ by
${M'}^N=M^N/\delta^{N-1}(M^{N-1})$ and by setting ${M'}^i = 0$ if $i < N$,
is quasi-isomorphic to $M^\bullet$ and
has topologically free pseudocompact terms over $R$.
\end{rem}

\begin{hypo}
\label{hypo:fincoh}
Throughout this paper, we assume that $V^\bullet $ is a
complex in $D^-(k[[G]])$ that has  only finitely many non-zero cohomology
groups, all of which have finite $k$-dimension.
\end{hypo}

\begin{dfn}
\label{def:lifts}
A \emph{quasi-lift}
of $V^\bullet$ over an object $R$ of
$\hat{\mathcal{C}}$ is a pair $(M^\bullet,\phi)$ consisting of a complex
$M^\bullet$ in
$D^-(R[[G]])$ that has finite pseudocompact $R$-tor  dimension
together with an isomorphism
\hbox{$\phi: k \hat{\otimes}^{\LL}_R M^\bullet \to V^\bullet$} in $D^-(k[[G]])$.
Two
quasi-lifts $(M^\bullet, \phi)$ and $({M'}^\bullet,\phi')$ are
\emph{isomorphic} if there is an isomorphism
$f:M^\bullet \to {M'}^\bullet$ in $D^-(R[[G]])$ with
$\phi'\circ(k\hat{\otimes}^{\LL}f)=\phi$.
\end{dfn}

\begin{thm} 
\label{thm:derivedresult}  
Suppose that $\HH^i(V^\bullet) = 0$ unless $n_1 \le i \le n_2$.  Every quasi-lift of $V^\bullet$ over 
an object $R$ of $\hat{\mathcal{C}}$ is isomorphic to a quasi-lift $(P^\bullet, \phi)$ for
a complex $P^\bullet$ with the following properties:
\begin{enumerate}
\item[(i)] The terms of $P^\bullet$ are topologically free $R[[G]]$-modules.
\item[(ii)] The cohomology group $\HH^i(P^\bullet)$ is finitely generated 
as an abstract $R$-module for all $i$, and $\HH^i(P^\bullet) = 0$ unless $n_1 \le i \le n_2$. 
\item[(iii)]   One has $\HH^i(S\hat{\otimes}^{\LL}_RP^\bullet)=0$  for all pseudocompact $R$-modules 
$S$ unless $n_1 \le i \le n_2$.
\end{enumerate}
\end{thm}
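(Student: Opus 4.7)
My plan is to build $P^\bullet$ from $M^\bullet$ via a single topologically free $R[[G]]$-resolution, deducing properties (ii) and (iii) from an auxiliary bounded complex produced by Remark~\ref{rem:dumbdumb}, after first upgrading the tor-dimension bound from the integer given by Definition~\ref{def:fintor} to the sharp value $n_1$ coming from $V^\bullet$.

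First, since $\mathrm{PCMod}(R[[G]])$ has enough projective objects and these are precisely the topologically free pseudocompact $R[[G]]$-modules (Remarks~\ref{rem:pseudocompact}(ii) and~\ref{rem:dumber}(ii)), I would choose a quasi-isomorphism $P^\bullet\to M^\bullet$ in $K^-(R[[G]])$ with $P^\bullet$ a bounded above complex of topologically free $R[[G]]$-modules. By Remark~\ref{rem:extrafree} each $P^i$ is then also topologically flat as a pseudocompact $R$-module, so condition~(i) holds, $P^\bullet$ inherits the finite pseudocompact $R$-tor dimension of $M^\bullet$ at some integer $N_0$, and $S\hat{\otimes}^{\LL}_R P^\bullet \simeq S\hat{\otimes}_R P^\bullet$ in $D^-$ for every pseudocompact $S$. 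Composing $\phi$ with the $k\hat{\otimes}^{\LL}$ of this quasi-isomorphism gives an isomorphism $k\hat{\otimes}_R P^\bullet \simeq V^\bullet$ in $D^-(k[[G]])$, so $(P^\bullet,\phi)$ represents the same quasi-lift as $(M^\bullet,\phi)$.

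Next, I would sharpen the tor-dimension bound to $n_1$. Applying Remark~\ref{rem:dumbdumb} to $P^\bullet$ at $N_0$ produces a bounded complex $M_1^\bullet$ of topologically flat $R$-modules quasi-isomorphic to $P^\bullet$, and from $\phi$ we have $k\hat{\otimes}_R M_1^\bullet \simeq V^\bullet$. In the top nonvanishing total degree $b$ of $\HH^\bullet(M^\bullet)$, the base-change spectral sequence
$$E_2^{p,q}=\mathrm{Tor}_{-p}^R(k,\HH^q(M^\bullet))\Longrightarrow \HH^{p+q}(V^\bullet)$$
is concentrated at $(p,q)=(0,b)$, so $k\otimes_R\HH^b(M^\bullet)\cong\HH^b(V^\bullet)$; topological Nakayama in $\mathrm{PCMod}(R)$ and iteration give $\HH^i(M^\bullet)=0$ for all $i>n_2$. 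For the vanishing of $\HH^i(S\hat{\otimes}_R M_1^\bullet)$ in degrees $i<n_1$ for arbitrary pseudocompact $S$, I would induct on length to handle Artinian $S$ using short exact sequences with composition factors isomorphic to $k$ and the long exact cohomology sequence, then pass to the general case via $S=\varprojlim S/J_\alpha$, exploiting exactness of inverse limits in $\mathrm{PCMod}(R)$ (Remark~\ref{rem:pseudocompact}(i)) and compatibility of $\hat{\otimes}_R$ with these limits. Remark~\ref{rem:dumbdumb} applied once more at $n_1$ then yields $M_2^\bullet$ supported in degrees $[n_1,m]$ with topologically flat $R$-terms and quasi-isomorphic to $P^\bullet$.

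Finally, conditions~(ii) and~(iii) follow from $P^\bullet\simeq M_2^\bullet$ in $D^-(R[[G]])$. For~(ii), $\HH^i(P^\bullet)=0$ for $i<n_1$ since $M_2^\bullet$ has no terms there, and for $i>n_2$ by the Nakayama argument of the previous paragraph; finite generation of $\HH^i(P^\bullet)$ over $R$ follows from a second application of topological Nakayama, using that $k\otimes_R\HH^i(P^\bullet)$ is a subquotient of the finite-dimensional $\HH^i(V^\bullet)$. For~(iii), $S\hat{\otimes}^{\LL}_R P^\bullet \simeq S\hat{\otimes}_R M_2^\bullet$ vanishes in degrees $<n_1$ trivially, and in degrees $>n_2$ because $M_2^\bullet\simeq \tau^{\le n_2}M_2^\bullet$ in $D^-(R[[G]])$ and the derived tensor product of a complex supported in degrees $\le n_2$ is itself supported in degrees $\le n_2$. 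The main obstacle is the length-induction/inverse-limit step of the previous paragraph, which is what upgrades the $k$-coefficient vanishing to a vanishing for all pseudocompact $S$ and makes the sharpened tor-dimension bound available; this is where the exactness and continuity properties of the pseudocompact category developed in~\cite{brumer} do the essential work.
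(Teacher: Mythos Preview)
Your overall architecture is sound and is in fact more self-contained than the paper's proof, which obtains (i) and the Artinian case of (ii)--(iii) by citing \cite[Lemmas 2.9, 3.1, 3.8]{bcderived} and then passes to the limit over $R/m_R^n$ using exactness of projective limits in $\mathrm{PCMod}(R)$. Your route---hyper-Tor spectral sequence plus topological Nakayama for the top of the range, then length induction and inverse limits for the bottom, packaged through the bounded flat complex $M_2^\bullet$---is a legitimate alternative and makes the role of the finite $R$-tor dimension more transparent.

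There is, however, a genuine gap in your finite-generation argument for $\HH^i(P^\bullet)$. You assert that $k\otimes_R\HH^i(P^\bullet)$ is a subquotient of $\HH^i(V^\bullet)$, but the spectral sequence
\[
E_2^{p,q}=\mathrm{Tor}_{-p}^R(k,\HH^q(P^\bullet))\Longrightarrow \HH^{p+q}(V^\bullet)
\]
gives the opposite relation on the edge: outgoing differentials from $E_r^{0,i}$ land in $E_r^{r,i-r+1}$ with $r>0$, hence vanish, so $E_\infty^{0,i}$ is a \emph{quotient} of $E_2^{0,i}=k\otimes_R\HH^i(P^\bullet)$, while $E_\infty^{0,i}$ sits as a subobject of $\HH^i(V^\bullet)$. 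Thus $k\otimes_R\HH^i(P^\bullet)$ surjects onto something finite-dimensional, but need not itself be a subquotient of $\HH^i(V^\bullet)$. The fix is a descending induction on $i$: for $i=n_2$ there are no incoming differentials (their sources involve $\HH^q$ with $q>n_2$), so $E_2^{0,n_2}=E_\infty^{0,n_2}$ is finite-dimensional and Nakayama applies; for $i<n_2$, the incoming differentials have sources that are subquotients of $\mathrm{Tor}_r^R(k,\HH^{i+r-1}(P^\bullet))$ with $i+r-1>i$, which are finite-dimensional by the inductive hypothesis and Noetherianity of $R$, and only finitely many $r$ contribute, so the kernel of $E_2^{0,i}\twoheadrightarrow E_\infty^{0,i}$ is finite-dimensional.

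Two minor slips: projective pseudocompact $R[[G]]$-modules are not in general topologically free (Remark~\ref{rem:dumber}(iii) is stated only for $\Lambda=R$), though this does not affect your argument since Remark~\ref{rem:pseudocompact}(ii) already gives enough topologically free modules; and in the last paragraph you should say $\tau^{\le n_2}M_2^\bullet$ again has topologically flat $R$-terms (the new term $\ker(d^{n_2})$ is flat because the tail $M_2^{n_2}\to\cdots\to M_2^m\to 0$ is an acyclic complex of flats above degree $n_2$), so that $S\hat\otimes_R \tau^{\le n_2}M_2^\bullet$ really computes the derived tensor product.
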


\begin{proof}
Part (i)  follows from \cite[Lemma 2.9]{bcderived}. Assume now
that the terms of $P^\bullet$ are topologically free $R[[G]]$-modules, which means in particular
that the functors $-\hat{\otimes}^{\LL}_RP^\bullet$ and $-\hat{\otimes}_RP^\bullet$ are
naturally isomorphic. Let $m_R$ denote the maximal ideal of $R$, and let $n$ be an
arbitrary positive integer.
By \cite[Lemmas 3.1 and 3.8]{bcderived}, $\HH^i((R/m_R^n)\hat{\otimes}_RP^\bullet)=0$ for $i>n_2$ 
and $i<n_1$. Moreover, for $n_1\le i\le n_2$,
$\HH^i((R/m_R^n)\hat{\otimes}_RP^\bullet)$ is a subquotient of  an abstractly free $(R/m_R^n)$-module
of rank $d_i=\mathrm{dim}_k\,\HH^i(V^\bullet)$,
and $(R/m_R^n)\hat{\otimes}_RP^\bullet$ has finite pseudocompact $(R/m_R^n)$-tor dimension at
$N=n_1$. Since 
$P^\bullet\cong \displaystyle \lim_{\stackrel{\longleftarrow}{n}}\, (R/m_R^n)\hat{\otimes}_RP^\bullet$ 
and since by Remark \ref{rem:pseudocompact}(i), the category $\mathrm{PCMod}(R)$ has 
exact projective limits, it  follows that for all pseudocompact $R$-modules $S$
$$\HH^i(S\hat{\otimes}_RP^\bullet)= \lim_{\stackrel{\longleftarrow}{n}}\,\HH^i\left(
(S/m_R^nS)\hat{\otimes}_{R/m_R^n}\left((R/m_R^n)\hat{\otimes}_R P^\bullet\right)\right)$$ 
for all $i$. Hence Theorem \ref{thm:derivedresult} follows.
\end{proof}

\begin{dfn}
\label{def:functordef}
Let $\hat{F} = \hat{F}_{V^\bullet}:\hat{\mathcal{C}} \to \mathrm{Sets}$
be the functor which sends an object $R$ of $\hat{\mathcal{C}}$ to the set
$\hat{F}(R)$ of all isomorphism classes of quasi-lifts 
of $V^\bullet$ over $R$, and which sends
a morphism $\alpha:R\to R'$ in $\hat{\mathcal{C}}$ to the set map
$\hat{F}(R)\to \hat{F}(R')$ induced by $M^\bullet \mapsto R'\hat{\otimes}_{R,\alpha}^{\LL}
M^\bullet$. Let $F = F_{V^\bullet}$ be the restriction of $\hat{F}$ 
to the subcategory $\mathcal{C}$ of Artinian objects in $\hat{\mathcal{C}}$.

Let $k[\varepsilon]$, where $\varepsilon^2=0$, denote the ring of dual numbers over
$k$. The set $F(k[\varepsilon])$ is called the \emph{tangent space} to 
$F$, denoted by $t_{F}$. 
\end{dfn}

\begin{dfn}
\label{dfn:bndcoh}
A profinite group $G$ has \emph{finite pseudocompact cohomology},
if for each discrete $k[[G]]$-module $M$ of finite $k$-dimension,
and all integers $j$, the cohomology group ${\HH}^j(G,M)=\mathrm{Ext}^j_{k[[G]]}(k,M)$ 
has finite $k$-dimension.
\end{dfn}

\begin{thm}
\label{thm:bigthm} {\rm (\cite[Thm.  2.14]{bcderived})}
Suppose that $G$ has finite pseudocompact cohomology.
\begin{enumerate}
\item[(i)] 
The functor
$F$ has a pro-representable hull 
$R(G,V^\bullet)\in \mathrm{Ob}(\hat{\mathcal{C}})$ 
$($c.f. \cite[Def. 2.7]{Sch} and \cite[\S 1.2]{Maz}$)$, and 
the functor
$\hat{F}$ is continuous
$($c.f. \cite{Maz}$)$. 

\item[(ii)]
There is a $k$-vector space isomorphism $h: t_F \to
\mathrm{Ext}^1_{D^-(k[[G]])}(V^\bullet,V^\bullet)$.

\item[(iii)]
If $\mathrm{Hom}_{D^-(k[[G]])}(V^\bullet,V^\bullet)= k$, then $\hat{F}$ is represented
by $R(G,V^\bullet)$. 
\end{enumerate}
\end{thm}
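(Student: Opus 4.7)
The plan is to deduce all three parts from Schlessinger's criteria \cite{Sch}, suitably adapted to the derived setting, combined with the fact established in Theorem \ref{thm:derivedresult} that every quasi-lift has a representative $(P^\bullet,\phi)$ with topologically free terms and bounded, finitely generated cohomology. This last fact is what makes the passage from the derived category to concrete cocycles manageable, since for such $P^\bullet$ the derived tensor product $R\hat{\otimes}^{\LL}_R P^\bullet$ is computed by the ordinary $\hat{\otimes}_R$.

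First I would verify the Schlessinger conditions (H1), (H2), (H3) for the restricted functor $F$ on Artinian objects. For (H1), given a small surjection $A''\twoheadrightarrow A$ and any morphism $A'\to A$ in $\mathcal{C}$, I must show that $F(A'\times_A A'')\to F(A')\times_{F(A)}F(A'')$ is surjective. Using topologically free representatives, a compatible pair of quasi-lifts $(P'^\bullet,\phi')$ and $(P''^\bullet,\phi'')$ together with an isomorphism of their reductions over $A$ can be upgraded (after replacing one by an isomorphic complex in $D^-$) to an actual chain-level identification; the term-by-term fiber product over $A$ then produces a complex of topologically free $(A'\times_A A'')[[G]]$-modules whose reductions recover the given data. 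Condition (H2) — bijectivity in the case $A=k$, $A''=k[\varepsilon]$ — follows once one observes that a quasi-lift over $k[\varepsilon]$ is determined up to isomorphism by a class in $\mathrm{Ext}^1_{D^-(k[[G]])}(V^\bullet,V^\bullet)$, via the short exact sequence
\begin{equation*}
0\to \varepsilon\cdot P^\bullet\to P^\bullet\to V^\bullet\to 0
\end{equation*}
of complexes of topologically free modules, which yields the connecting class in $\mathrm{Ext}^1$. Condition (H3), the finite-dimensionality of $t_F$, is exactly the statement that $\mathrm{Ext}^1_{D^-(k[[G]])}(V^\bullet,V^\bullet)$ is finite-dimensional over $k$; this reduces, via a hypercohomology spectral sequence and Hypothesis \ref{hypo:fincoh} (boundedness and finite-dimensionality of the cohomology of $V^\bullet$), to finiteness of $\mathrm{H}^j(G,M)$ for finite-dimensional discrete $M$, which is precisely the assumption that $G$ has finite pseudocompact cohomology. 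These three conditions give part (i) (existence of a hull) and simultaneously prove part (ii), since the tangent space map $h$ is defined exactly by the bijection produced in (H2).

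Continuity of $\hat{F}$ (in the sense of \cite{Maz}) should then follow from standard limit arguments: for $R = \varprojlim R/m_R^n$ in $\hat{\mathcal{C}}$, an inverse system of quasi-lifts $(P_n^\bullet,\phi_n)$ over $R/m_R^n$ with topologically free terms assembles to a quasi-lift over $R$, using exactness of projective limits in $\mathrm{PCMod}(R)$ (Remark \ref{rem:pseudocompact}(i)) and the fact that the cohomology of the limit computes the limit of cohomologies in our bounded setting with finite-length pieces.

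Finally, for part (iii), I would verify Schlessinger's strengthened condition (H4): when $A''\twoheadrightarrow A$ is a small surjection with $A'=A''$, the map $F(A''\times_A A'')\to F(A'')\times_{F(A)}F(A'')$ is bijective. The main obstacle, and the step on which everything hinges, is showing that the automorphism group of a quasi-lift $(P^\bullet,\phi)$ over $A\in\mathcal{C}$ acts trivially on the set of lifts to $A''$, or equivalently that the natural map of automorphism groups is surjective. The hypothesis $\mathrm{Hom}_{D^-(k[[G]])}(V^\bullet,V^\bullet)=k$ implies by a Nakayama/lifting argument that every automorphism of the reduction $V^\bullet$ lifts to an automorphism of $P^\bullet$, which yields (H4) and thus promotes the hull to a universal deformation ring representing $\hat{F}$.
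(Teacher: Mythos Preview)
The paper does not contain a proof of this theorem; it is quoted verbatim from \cite[Thm.~2.14]{bcderived} and no argument is given here. Your outline via Schlessinger's conditions (H1)--(H4) is the standard route and is indeed what is carried out in the cited reference, so in that sense your approach matches the intended one.

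That said, two steps in your sketch deserve more care than you give them. In (H1), the claim that a derived-category isomorphism of reductions over $A$ can be ``upgraded to an actual chain-level identification'' is not automatic: one must use that both representatives consist of projective pseudocompact modules, so that a $D^-$-isomorphism is realized by a genuine homotopy equivalence, and then check that the term-by-term fiber product over $A$ really produces topologically free modules over $A'\times_A A''$ with the correct finite tor-dimension. In (H4), your final sentence is too quick: the hypothesis $\mathrm{Hom}_{D^-(k[[G]])}(V^\bullet,V^\bullet)=k$ is used to show that for any quasi-lift $(P^\bullet,\phi)$ over an Artinian $A$, the ring $\mathrm{End}_{D^-(A[[G]])}(P^\bullet)$ is local with residue field $k$, from which one deduces that every automorphism of a reduction lifts; simply saying ``Nakayama/lifting argument'' hides the point where the derived-category endomorphism ring must be controlled.
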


\begin{rem}
\label{rem:newrem}
By Theorem \ref{thm:bigthm}(i), there exists 
a quasi-lift $(U(G,V^\bullet),\phi_U)$ of $V^\bullet$ over $R(G,V^\bullet)$ 
with the following property. For each $R\in \mathrm{Ob}(\hat{\mathcal{C}})$, the map
$\mathrm{Hom}_{\hat{\mathcal{C}}}(R(G,V^\bullet),R) \to \hat{F}(R)$ 
induced by $\alpha \mapsto R\hat{\otimes}^{\LL}_{R(G,V^\bullet),\alpha} U(G,V^\bullet)$ is surjective,
and this map is bijective if $R$ is the ring of dual numbers $k[\varepsilon]$ over $k$
where $\varepsilon^2=0$.

In general,
the isomorphism type of the pro-representable hull $R(G,V^\bullet)$ is
unique up to non-canonical isomorphism.
If $R(G,V^\bullet)$ represents $\hat{F}$, then $R(G,V^\bullet)$
is uniquely determined up to canonical isomorphism.
\end{rem}

\begin{dfn}
\label{def:newdef}
Using the notation of Theorem \ref{thm:bigthm} and Remark \ref{rem:newrem}, 
we call 
$R(G,V^\bullet)$
the \emph{versal deformation ring}
of $V^\bullet$ and  $(U(G,V^\bullet),\phi_U)$ 
a \emph{versal deformation} of $V^\bullet$.

If $R(G,V^\bullet)$ represents $\hat{F}$, then
$R(G,V^\bullet)$ will be
called the \emph{universal deformation ring} 
of $V^\bullet$ and $(U(G,V^\bullet),\phi_U)$  will be called a
\emph{universal deformation} of $V^\bullet$.
\end{dfn}

\begin{rem}
\label{rem:bigthm}
If $V^\bullet$ consists of a single module $V_0$ in dimension $0$,
the versal deformation ring $R(G,V^\bullet)$ coincides with the versal deformation
ring studied by Mazur in \cite{maz1,Maz}.
In this case,
Mazur assumed only that $G$ satisfies a certain finiteness condition
($\Phi_p$), which is equivalent
to the requirement that ${\HH}^1(G,M)$ have finite $k$-dimension for all discrete 
$k[[G]]$-modules $M$ of finite $k$-dimension.
Since the higher $G$-cohomology enters into determining lifts of
complexes $V^\bullet$
having more than one non-zero cohomology group, the condition that $G$
have finite pseudocompact
cohomology is the natural generalization of Mazur's finiteness condition
in this context.
\end{rem}

We also need to set up some notation concerning local deformation functors.

\begin{dfn}
\label{def:localdef}
Let $V^\bullet$ be as in Hypothesis \ref{hypo:fincoh}, let $A$ be in $\hat{\mathcal{C}}$,
and let $(Z^\bullet,\zeta)$ be a quasi-lift of $V^\bullet$ over $A$.
Let $A'\to A$ in $\hat{\mathcal{C}}$ be a surjective morphism in $\hat{\mathcal{C}}$ whose kernel 
is an ideal $J$ with $J^2=0$. 

A (\emph{local}) \emph{quasi-lift} of $(Z^\bullet,\zeta)$ over $A'$ is a pair $(Y^\bullet,\upsilon)$ 
consisting of
a complex $Y^\bullet$ in $D^-(A'[[G]])$ that has finite pseudocompact $A'$-tor dimension 
together with an isomorphism $\upsilon:A\hat{\otimes}^{\LL}_{A'}Y^\bullet\to Z^\bullet$ in $D^-(A[[G]])$.
Note that if $(Y^\bullet,\upsilon)$ is a quasi-lift of $(Z^\bullet,\zeta)$ over $A'$, then
$(Y^\bullet,\zeta\circ (k\hat{\otimes}^{\LL}\upsilon))$ is a quasi-lift of $V^\bullet$ over $A'$.

Two quasi-lifts $(Y^\bullet,\upsilon)$ and $({Y'}^\bullet,\upsilon')$ of $(Z^\bullet,\zeta)$ over $A'$
are said to be \emph{locally isomorphic} if there exists an isomorphism $f:Y^\bullet\to {Y'}^\bullet$
in $D^-(A'[[G]])$ with $\upsilon'\circ (A\hat{\otimes}^{\LL}f) = \upsilon$.
\end{dfn}


\section{Obstructions}
\label{s:obstruct}
\setcounter{equation}{0}

Let $V^\bullet$ be as in Hypothesis \ref{hypo:fincoh}, let $A$ be in $\hat{\mathcal{C}}$,
and let $(Z^\bullet,\zeta)$ be a quasi-lift of $V^\bullet$ over $A$.
Let $A'\to A$ in $\hat{\mathcal{C}}$ be a surjective morphism in $\hat{\mathcal{C}}$ whose kernel 
is an ideal $J$ with $J^2=0$. In this section, we 
develop the two different approaches described in the introduction to finding 
a lifting obstruction
$$\omega(Z^\bullet,A')\in 
\mathrm{Ext}^2_{D^-(A[[G]])}(Z^\bullet,J\hat{\otimes}^{\LL}_AZ^\bullet)$$ 
which vanishes if and only if $(Z^\bullet,\zeta)$ can be lifted to $A'$. 
The naive approach  is given in \S \ref{ss:naive} while the
spectral sequence approach is developed in \S \ref{ss:spectralseqroad} - \S \ref{ss:aut}.  
The two methods are compared in \S \ref{ss:compare}. More precisely,
we show that the lifting obstruction from either method can be obtained from the other
by composing with suitable automorphisms of $Z^\bullet$ and $J\hat{\otimes}^{\LL}_AZ^\bullet[2]$,
respectively, in $D^-(A[[G]])$.

Using the results from \S\ref{s:lifts}, we can make the following assumption concerning
$V^\bullet$ and $Z^\bullet$.

\begin{hypo}
\label{hypo:obstruct}
Assume $V^\bullet$ is as in Hypothesis \ref{hypo:fincoh} with $\HH^i(V^\bullet)=0$
unless $-p_0\le i\le -1$.
Suppose $0\to J\to A'\to A\to 0$ 
is an extension of objects $A',A$ in $\hat{\mathcal{C}}$ with $J^2=0$.
Let $B'=A'[[G]]$ and $B=A[[G]]$.

Let $(Z^\bullet,\zeta)$ be a quasi-lift of $V^\bullet$ over $A$. 
By Theorem \ref{thm:derivedresult} and Remark \ref{rem:dumbdumb}, we can make
the following assumptions:
The complex $Z^\bullet$ is a bounded complex of pseudocompact $B$-modules whose terms $Z^i$ are
zero unless $-p_0\le i\le -1$. The terms $Z^i$ are topologically flat, hence projective, 
pseudocompact $B$-modules for $i\neq -p_0$, and $Z^{-p_0}$ is topologically flat, hence topologically free, over $A$.
\end{hypo}

\begin{rem}
\label{rem:AG}
The functors $A\hat{\otimes}_{A'}-$ and 
$B\hat{\otimes}_{B'}-$ are naturally isomorphic functors $\mathrm{PCMod}(B')\to
\mathrm{PCMod}(B)$. 
Similarly to Remark \ref{rem:leftderivedtensor}, one obtains a 
well-defined left derived completed tensor product $B\hat{\otimes}_{B'}^{\LL}-$.
The functors $A\hat{\otimes}_{A'}^{\LL}-$ and 
$B\hat{\otimes}_{B'}^{\LL}-$ are naturally isomorphic functors  $D^-(B')\to D^-(B)$. 
\end{rem}


\subsection{A naive approach}
\label{ss:naive}

In this subsection we describe a naive approach to obstruction theory. 
We assume Hypothesis \ref{hypo:obstruct}. 
Let $(\tilde{Z}^\bullet,\tilde{\zeta})$ be a quasi-lift of $V^\bullet$ over $A$ that is isomorphic
to the quasi-lift $(Z^\bullet,\zeta)$ such that $\tilde{Z}^\bullet$ is concentrated in degrees 
$\le -1$ and all terms of $\tilde{Z}^\bullet$ are topologically free pseudocompact $B$-modules.
For each $j\in\mathbb{Z}$, let $Y^j$ be a topologically free pseudocompact $B'$-module 
which is a lift of $\tilde{Z}^j$ over $A'$ and let $a_Y^j:Y^j\to \tilde{Z}^j$ be 
the composition of the natural surjection $Y^j\to A\hat{\otimes}_{A'}Y^j$
followed by  $A\hat{\otimes}_{A'}Y^j\xrightarrow{\cong} \tilde{Z}^j$. Moreover,
let $c_Y^j:Y^j\to Y^{j+1}$ be a homomorphism of 
pseudocompact $B'$-modules such that $a_Y^{j+1}\circ c_Y^j = d_{\tilde{Z}}^j \circ a_Y^j$ for all $j$. 
In particular, $Y^j=0$ for $j\ge 0$, and  $c_Y^j=0$ for $j\ge -1$.
Note that $c_Y^{j+1}\circ c_Y^j$ may be non-zero so that $(Y^j,c_Y^j)_j$ is not necessarily a complex.
However, $(JY^j,c_{Y}^j\big|_{JY^j})_j$ defines a complex $JY^\bullet$ in $C^-(B)$
which is isomorphic to $J\hat{\otimes}_A\tilde{Z}^\bullet$ in $C^-(B)$.
For all $j\in\mathbb{Z}$, define $\tilde{\omega}^j:\tilde{Z}^j\to JY^{j+2}$ by
\begin{equation}
\label{eq:yuckyuck}
\tilde{\omega}^j(a_Y^j(y)) = c_Y^{j+1}(c_Y^j(y))
\end{equation} 
for all $y\in Y^j$.
Then $\tilde{\omega}\in\mathrm{Hom}_{C^-(B)}(\tilde{Z}^\bullet,JY^\bullet[2])$. 
Let $\omega_0(Z^\bullet,A')$ be the corresponding morphism in
$\mathrm{Ext}^2_{D^-(B)}(Z^\bullet,J\hat{\otimes}_AZ^\bullet)
\cong \mathrm{Hom}_{K^-(B)}(\tilde{Z}^\bullet,JY^\bullet[2])$.

We will show in \S\ref{ss:compare} that $\omega_0(Z^\bullet,A')$ is independent of choices by 
showing that $\omega_0(Z^\bullet,A')$ can be obtained from the lifting obstruction defined by a 
spectral sequence by composing with suitable automorphisms of $Z^\bullet$ and 
$J\hat{\otimes}_AZ^\bullet[2]$, respectively, in $D^-(B)$ (see Proposition \ref{prop:compare}). 

In particular, by using a fixed versal deformation of $V^\bullet$ over $R=R(G,V^\bullet)$
whose terms are topologically free pseudocompact $R[[G]]$-modules, we can assume that if 
there exists a quasi-lift of $(Z^\bullet,\zeta)$ over $A'$, then it is locally isomorphic to a 
quasi-lift $(\tilde{Y}^\bullet,\tilde{\upsilon})$ of $(Z^\bullet,\zeta)$ over $A'$
satisfying $\tilde{Y}^j=Y^j$ for all $j$. 

Since $\omega_0(Z^\bullet,A')=0$ in $D^-(B)$ if and only if $\tilde{\omega}$ is homotopic to zero 
in $C^-(B)$, we see the following. If there exists a quasi-lift $(\tilde{Y}^\bullet,\tilde{\upsilon})$ of 
$(Z^\bullet,\zeta)$ over $A'$ such that $\tilde{Y}^j=Y^j$ for all $j$, then the homotopy
$h^j:\tilde{Z}^j\to JY^{j+1}=J\tilde{Y}^{j+1}$ defined by $h^j\circ a_Y^j = c_Y^j - d_{\tilde{Y}}^j$
for all $j$ can be used to show that $\tilde{\omega}=0$ in $K^-(B)$. On the other hand,
if $\tilde{\omega}$ is homotopic to zero in $C^-(B)$, then the corresponding homotopy 
can be used to correct the maps $c_Y^j$ to obtain a complex $(Y^\bullet,d_Y)$ in $C^-(B')$ which 
defines a quasi-lift of $(Z^\bullet,\zeta)$  over $A'$.

Suppose now that $\omega_0(Z^\bullet,A')=0$, and let $(Y_0^\bullet,\upsilon_0)$ and
$({Y'}^\bullet,\upsilon')$ be two quasi-lifts of $(Z^\bullet,\zeta)$ over $A'$.
As seen above, we can assume without loss of generality that $Y_0^j=Y^j={Y'}^j$  for all $j$.
For all $j\in\mathbb{Z}$, define $\tilde{\beta}_{Y'}^j:\tilde{Z}^j\to JY^{j+1}$ by
\begin{equation}
\label{eq:principali}
\tilde{\beta}_{Y'}^j(a_Y^j(y))= d_{Y'}^j(y) - d_{Y_0}^j(y)
\end{equation}
for $y\in Y_0^j=Y^j={Y'}^j$.
Then $\tilde{\beta}_{Y'}\in\mathrm{Hom}_{C^-(B)}(\tilde{Z}^\bullet,
JY^\bullet[1])$. Let $\beta_{Y'}$ be the corresponding morphism in
$\mathrm{Ext}^1_{D^-(B)}(Z^\bullet,J\hat{\otimes}_AZ^\bullet)
\cong \mathrm{Hom}_{K^-(B)}(\tilde{Z}^\bullet,JY^\bullet[1])$.

We will show later that this can be used to prove that the set of all local isomorphism classes of 
quasi-lifts of $(Z^\bullet,\zeta)$ over $A'$ is a principal homogeneous space for 
$\mathrm{Ext}^1_{D^-(B)}(Z^\bullet,J\hat{\otimes}_AZ^\bullet)$, by relating this to the 
corresponding result obtained from the spectral sequence method.
More precisely, we will show that if the local isomorphism classes of the quasi-lifts 
$(Y_0^\bullet,\upsilon_0)$ and $({Y'}^\bullet,\upsilon')$ of $(Z^\bullet,\zeta)$ over $A'$ 
correspond to the classes $\eta_0$ and $\eta'$, respectively,  in 
$\mathrm{Ext}^1_{D^-(B')}(Z^\bullet,J\hat{\otimes}_AZ^\bullet)$ by the spectral sequence method,
then the difference $\eta'-\eta_0$ in  $\mathrm{Ext}^1_{D^-(B')}(Z^\bullet,J\hat{\otimes}_AZ^\bullet)$
is uniquely determined by $\beta_{Y'}$ (see Proposition \ref{prop:compare}).


\subsection{Outline of the spectral sequence approach}
\label{ss:spectralseqroad}

In this subsection we introduce the spectral sequence approach to obstruction
theory by discussing the case of modules and by then indicating what adjustments
must be made for complexes.  This method goes back to Illusie in \cite[\S 3.1]{Illusie}.
It requires more effort than the naive approach, but as indicated in the introduction,
it places the local lifting problem in the context of studying $\mathrm{Ext}^1$ groups.

Let $Z$ be a
pseudocompact $B$-module which is (abstractly) free and finitely generated over $A$. 
We have a convergent spectral sequence
\begin{equation}
\label{eq:spectralmodule}
E_2^{p,q}=\mathrm{Ext}_B^p(\mathrm{Tor}^{A'}_q(Z,A),J\hat{\otimes}_AZ)
\Longrightarrow \mathrm{Ext}_{B'}^{p+q}(Z, J\hat{\otimes}_AZ).
\end{equation}
This arises in the following way.  To find the groups $\mathrm{Tor}^{A'}_q(Z,A)$,
one chooses a resolution $P^\bullet$ of $Z$ by projective pseudocompact $B'$-modules.
Then $\mathrm{Tor}^{A'}_q(Z,A) = \HH^{-q}(A \hat{\otimes}_{A'} P^\bullet)$, and the group
$\mathrm{Ext}_{B'}^{p+q}(Z, J\hat{\otimes}_AZ)$ is the group $\HH^{p+q}(\mathrm{Hom}_{B'}(P^\bullet,J\hat{\otimes}_A Z))$.
The key observation is that since $J \hat{\otimes}_A Z$ is a $B$-module, the complex
$\mathrm{Hom}_{B'}(P^\bullet,J\hat{\otimes}_A Z)$ is canonically isomorphic to the complex 
$\mathrm{Hom}_{B}(A \hat{\otimes}_{A'} P^\bullet,J\hat{\otimes}_A Z)$.  A Cartan-Eilenberg
resolution $M^{\bullet,\bullet}$
of $A\hat{\otimes}_{A'}P^\bullet$ is a double complex of projective pseudocompact $B$-modules
which gives a resolution of each term of $A\hat{\otimes}_{A'}P^\bullet$ which is compatible
with boundary maps and has some additional splitting properties (see \cite[\S (11.7) of Chap. 0]{ega3}).  One arrives at a double complex $L^{\bullet,\bullet}$ of 
$B$-modules given by $L^{q,p}=\mathrm{Hom}_B(M^{-q,-p},J\hat{\otimes}_AZ)$ such
that $$\HH^{p+q}(\mathrm{Tot}(L^{\bullet,\bullet})) = \HH^{p+q}(\mathrm{Hom}_{B}(A \hat{\otimes}_{A'} P^\bullet,J\hat{\otimes}_A Z)) = \mathrm{Ext}_{B'}^{p+q}(Z, J\hat{\otimes}_AZ).$$
The spectral sequence (\ref{eq:spectralmodule})
is then the spectral sequence of  $L^{\bullet,\bullet}$ relative to the second filtration 
of the total complex $\mathrm{Tot}(L^{\bullet,\bullet})$. We obtain the following exact sequence of
low degree terms associated to the spectral sequence $(\ref{eq:spectralmodule})$:
\begin{equation}
\label{eq:lowmodule}
0\to E_2^{1,0} \to \mathrm{Ext}^1_{B'}(Z,J\hat{\otimes}_AZ) \to E_2^{0,1} \xrightarrow{d_2^{0,1}}
E_2^{2,0}
\end{equation}

We now
sketch Gabber's approach to realizing the obstruction to lifting $Z$ from $A$
to $A'$ via the spectral sequence (\ref{eq:spectralmodule}).  We can find an exact sequence
\begin{equation}
\label{eq:original}
0 \to T \xrightarrow{\delta} P^0 \xrightarrow{\epsilon} Z \to 0
\end{equation}
in which $P^0$ is a finitely generated projective pseudocompact $B'$-module.
Applying the functor $A \hat{\otimes}_{A'}-$ to (\ref{eq:original}),  we obtain a Tor sequence
\begin{equation}
\label{eq:torry}
0 \to \mathrm{Tor}^{A'}_1(A,Z) \xrightarrow{\sigma} A \hat{\otimes}_{A'} T 
\xrightarrow{A\hat{\otimes}_{A'}\delta} A \hat{\otimes}_{A'} P^0 
\xrightarrow{A\hat{\otimes}_{A'}\epsilon} Z \to 0.
\end{equation}
Applying the functor $-\hat{\otimes}_{A'} Z$ to the exact sequence
$$0 \to J \to A' \to A\to 0,$$
we obtain a canonical isomorphism
\begin{equation}
\label{eq:iotadefmodule}
\iota:\mathrm{Tor}^{A'}_1(A,Z) \to J \hat{\otimes}_{A'} Z = J \hat{\otimes}_{A} Z
\end{equation}
since $A \hat{\otimes}_{A'} Z = Z$.  Combining (\ref{eq:torry}) and (\ref{eq:iotadefmodule})
gives an exact sequence
\begin{equation}
\label{eq:torry2}
0 \to J \hat{\otimes}_{A} Z \xrightarrow{\sigma\circ\iota^{-1}} A \hat{\otimes}_{A'} T 
\xrightarrow{A\hat{\otimes}_{A'}\delta} A \hat{\otimes}_{A'} P^0 
\xrightarrow{A\hat{\otimes}_{A'}\epsilon} Z \to 0.
\end{equation}
Let $\omega(Z,A')$ be the class of (\ref{eq:torry2}) in $\mathrm{Ext}^2_{B}(Z,J \hat{\otimes}_{A} Z)$.
Using the fact that $E_2^{p,q} = H_{II}^p(H_I^q(L^{\bullet,\bullet}))$ one can show
that $\omega(Z,A')$ is the image of 
$$\iota \in \mathrm{Hom}_B(\mathrm{Tor}^{A'}_1(A,Z), J \hat{\otimes}_{A} Z) = E_2^{0,1}$$
under the boundary map 
$$d_2^{0,1}:E_2^{0,1} \to E_2^{2,0}$$
associated to the spectral sequence (\ref{eq:spectralmodule}).  

We now sketch why $\omega(Z,A')$ is the obstruction to lifting $Z$
to a pseudocompact $B'$-module $Y$ which is (abstractly) free and finitely  generated
over $A'$ such that $A \hat{\otimes}_{A'} Y \cong Z$.  
If such  a lift $Y$ exists, one has an exact sequence of $B'$-modules 
\begin{equation}
\label{eq:xrated}
 0 \to X \to Y \to Z \to 0
 \end{equation}
in which $X$ is isomorphic to $JY = J \hat{\otimes}_{A} Z$.  The associated Tor sequence
\begin{equation}
\label{eq:torry3}
0 \to \mathrm{Tor}^{A'}_1(A,Z) \xrightarrow{f} A \hat{\otimes}_{A'} X \to A \hat{\otimes}_{A'} Y \xrightarrow{\upsilon} Z \to 0
\end{equation}
has the property that $\upsilon$ is an isomorphism, so $f$ is an isomorphism. Thus
(\ref{eq:torry3}) has trivial extension class.  By constructing a map from 
(\ref{eq:torry}) to (\ref{eq:torry3}) which is an identity on the leftmost and rightmost terms
we see $\omega(Z,A') = 0$.
Conversely, suppose that $\omega(Z,A') = 0$. Define $D$ to be the kernel
of the homomorphism $A \hat{\otimes}_{A'}\epsilon$ in (\ref{eq:torry}). By dimension shifting,
$\omega(Z,A') = 0$ implies that the exact sequence 
\begin{equation}
\label{eq:biteme}
0 \to J \hat{\otimes}_A Z \xrightarrow{\sigma\circ\iota^{-1}} A \hat{\otimes}_{A'} T 
\xrightarrow{A\hat{\otimes}_{A'}\delta} D=
\mathrm{Image}(A\hat{\otimes}_{A'}\delta) \to 0
\end{equation}
is split by a homomorphism
$\kappa: A \hat{\otimes}_{A'} T \to J \hat{\otimes}_A Z$ of pseudocompact $B$-modules. 
We now define $Y$ to be the pushout of $T\xrightarrow{\delta} P^0$ in
(\ref{eq:original}) and the composition $T\to A \hat{\otimes}_{A'} T \xrightarrow{\kappa} 
J \hat{\otimes}_A Z$.  One then has an exact sequence of the form (\ref{eq:xrated})
with $X = J \hat{\otimes}_A Z$.  On identifying $f$ in the resulting sequence (\ref{eq:torry3})
with $\kappa\circ \sigma = \iota$, one sees that $f$  is an isomorphisms.  Therefore
$\upsilon$ in (\ref{eq:torry3}) is an isomorphism, which shows $Y$ is a lift of $Z$. 

It follows from the sequence (\ref{eq:lowmodule}) of low degree terms that if 
there exists a lift of $Z$ over $A'$, i.e. if $\omega(Z,A')=0$, then the set of all local isomorphism classes of
lifts of $Z$ over $A'$ is in bijection with the full preimage of $\iota$ in 
$\mathrm{Ext}^1_{B'}(Z,J\hat{\otimes}_AZ)$ and is therefore a principal homogeneous space for 
$E_2^{1,0}=\mathrm{Ext}^1_B(Z,J\hat{\otimes}_AZ)$.

We now describe  the counterpart of the spectral sequence (\ref{eq:spectralmodule}) for a complex  
$Z^\bullet$ in place of $Z$.  
Assume Hypothesis \ref{hypo:obstruct}. The main point of assuming that $\HH^i(V^\bullet)=0$
unless $-p_0\le i\le -1$ is that this allows us to work in the abelian categories $C_0(B)$
and $C_0(B')$ of bounded above complexes that are concentrated in degrees $\le 0$.
Moreover, by insisting that $\HH^0(V^\bullet)$ is zero, we can make sure there exists an acyclic
complex of projective pseudocompact $B'$-modules $P^{0,\bullet}$ in $C_0(B')$ together with a 
morphism $\epsilon:P^{0,\bullet}\to Z^\bullet$ in $C_0(B')$ that is surjective on terms.
One can now generalize the spectral sequence 
(\ref{eq:spectralmodule}) by choosing 
a projective resolution $P^{\bullet,\bullet}$ of $Z^\bullet$ of projective
objects in $C_0(B')$ such that $P^{0,\bullet}$ has the nice properties above.
We then work with a  triple complex $M^{\bullet,\bullet,\bullet}$ which is a Cartan-Eilenberg resolution
of $A\hat{\otimes}_{A'}P^{\bullet,\bullet}$.  The double complex $L^{\bullet,\bullet}$ of $B$-modules
which leads to the spectral sequence we require is a partial total complex of the quadruple
complex $\mathrm{Hom}_B(M^{\bullet,\bullet,\bullet},J \hat{\otimes}_A Z^\bullet)$.  The
spectral sequence which results has the form
\begin{equation}
\label{eq:redundant}
E_2^{p,q}=\mathrm{Ext}^p_{D^-(B)}(\HH_{I}^{-q}(A\hat{\otimes}_{A'}P^{\bullet,\bullet}),
J\hat{\otimes}_AZ^\bullet)\;\Longrightarrow\;
\mathrm{Ext}^{p+q}_{D^-(B')}(Z^\bullet, J\hat{\otimes}_AZ^{\bullet})
\end{equation}
(see also (\ref{eq:spectral})).
As in the module case, we obtain an exact sequence of low degree terms, which looks slightly more
complicated than the sequence (\ref{eq:lowmodule}):
\begin{equation}
\label{eq:redundant2}
0\to E_2^{1,0}/W_2^{1,0} \to F_{II}^0\,\HH^1(\mathrm{Tot}(L^{\bullet,\bullet})) \to E_2^{0,1}/W_2^{0,1} 
\xrightarrow{\overline{d_2^{0,1}}} E_2^{2,0}
\end{equation}
(see also (\ref{eq:lowdegree})). Here $E_\infty^{1,0}=E_2^{1,0}/W_2^{1,0}$,
$E_\infty^{0,1}=\mathrm{Ker}(d_2^{0,1})/W_2^{0,1}$ and 
$F_{II}^0\,\HH^1(\mathrm{Tot}(L^{\bullet,\bullet}))$ is the second to last term in the second filtration
of 
$\HH^1(\mathrm{Tot}(L^{\bullet,\bullet}))=\mathrm{Ext}^1_{D^-(B')}(Z^\bullet,J\hat{\otimes}_AZ^\bullet)$.
The details of the set-up of the spectral sequence (\ref{eq:redundant}) and the sequence of
low degree terms (\ref{eq:redundant2}) for complexes $Z^\bullet$ are explained in 
\S \ref{ss:spectralseq}.

To define lifting obstructions, we follow the outlined construction in the module case given by 
equations (\ref{eq:original}) - (\ref{eq:torry2}). We assume as before that $P^{0,\bullet}$ is an acyclic complex of projective pseudocompact
$B'$-modules in $C_0(B')$. In particular, $\iota$ is an isomorphism in $C^-(B)$ and
our candidate for the lifting obstruction $\omega(Z^\bullet,A')$ is an element of
$\mathrm{Ext}^2_{D^-(B)}(Z^\bullet, J\hat{\otimes}_AZ^\bullet)$. Using the definition of
$L^{\bullet,\bullet}$ and the projective Cartan-Eilenberg resolution $M^{\bullet,\bullet,\bullet}$ of 
$A\hat{\otimes}_{A'}P^{\bullet,\bullet}$, we see, similarly to the module case, that 
$\omega(Z^\bullet,A')$ is the image of $\iota$ under the boundary map $d_2^{0,1}$
associated to the spectral sequence (\ref{eq:redundant}) (see Lemma \ref{lem:oyoyoy!}).

A  complication in the case of complexes compared to the module case is that
in the sequence of low degree terms (\ref{eq:redundant2}) the term 
$F_{II}^0=F_{II}^0 \,\HH^1(\mathrm{Tot}(L^{\bullet,\bullet}))$ is usually a proper subspace of
$\mathrm{Ext}^1_{B'}(Z^\bullet,J\hat{\otimes}_AZ)$. Therefore, we analyze in
\S \ref{ss:gabber} this subspace $F_{II}^0$. We use Gabber's ideas to see that $F_{II}^0$
consists precisely of those elements in $\mathrm{Ext}^1_{B'}(Z^\bullet,J\hat{\otimes}_AZ)$
which can be realized by short exact sequences in $D^-(B')$ of the form
$$\xi:\quad 0\to X^\bullet\to Y^\bullet\to Z^\bullet$$
where the terms of $X^\bullet$ are annihilated by $J$ and there exists an isomorphism
$h_\xi:X^\bullet\to J\hat{\otimes}AZ^\bullet$ in $D^-(B)$. A crucial step in showing this is 
to rewrite the elements of $F_{II}^0$ in terms of morphisms 
$\kappa\in\mathrm{Hom}_{D^-(B)}(A\hat{\otimes}_{A'}T^\bullet,J\hat{\otimes}_A Z^\bullet)$
(see Definition \ref{def:pushout} and Lemma \ref{lem:gabberconstruct}).
We then use the definition of $L^{\bullet,\bullet}$ and in particular the triple complex 
$M^{\bullet,\bullet,\bullet}$ to represent the class in 
$\mathrm{Ext}^1_{B'}(Z^\bullet,J\hat{\otimes}_AZ)$ given by $(\xi,h_\xi)$ explicitly as an
element in $L^{1,0}$, and hence as an element in $F_{II}^0$.
Finally we analyze the image of $E_\infty^{1,0}=E_2^{1,0}/W_2^{1,0}$ in $F_{II}^0$
in (\ref{eq:redundant2}) and describe the map $F_{II}^0\to E_2^{0,1}/W_2^{0,1}$ in 
(\ref{eq:redundant2}) to show that every element in $F_{II}^0$ can be represented by
a short exact sequence $\xi$ and an isomorphism $h_\xi$ as above. 
These steps are carried out in the proof of Lemma  \ref{lem:gabberfilter}.

The proof that $\omega(Z^\bullet,A')=0$ if and only if $Z^\bullet$ has a quasi-lift over $A'$
is then done in a very similar way to the module case (see Lemmas
\ref{lem:gabberlemma1} and  \ref{lem:gabberlift}).

Another  complication in the complex case is that the left most term in the
sequence (\ref{eq:redundant2}) is $E_\infty^{1,0}=E_2^{1,0}/W_2^{1,0}$ rather than
$E_2^{1,0}=\mathrm{Ext}^1_{D^-(B)}(Z^\bullet,J\hat{\otimes}_AZ^\bullet)$. 
As in the module case, we can directly use (\ref{eq:redundant2}) together with our
analysis of $F_{II}^0$ to show that if $\omega(Z^\bullet,A')=0$ then
the set of all local isomorphism classes of quasi-lifts of $Z^\bullet$ over $A'$ is a principal
homogeneous space for $E_\infty^{1,0}$. We then show that the
existence of a quasi-lift of $Z^\bullet$ over $A'$  implies that the spectral sequence
(\ref{eq:redundant}) partially degenerates. More precisely, we show that the inflation map 
$$\mathrm{Inf}_B^{B'}: \mathrm{Ext}^p_{D^-(B)}(Z^\bullet,J\hat{\otimes}_AZ^\bullet)\to
\mathrm{Ext}^p_{D^-(B')}(Z^\bullet,J\hat{\otimes}_AZ^\bullet)$$
is injective for all $p$ if $\omega(Z^\bullet,A')=0$. 
This is carried out in the proof of Lemma \ref{lem:gabberlift}.


\subsection{A spectral sequence}
\label{ss:spectralseq}

In this subsection we describe the spectral sequence we will use for the obstructions.
The definition of this spectral sequence follows (the dual of) Grothendieck's
construction in \cite[\S (11.7) of Chap. 0]{ega3}. 
The following remark describes certain subcategories of $C^-(B')$ and $C^-(B)$ which
play an important role in this construction.

\begin{rem}
\label{rem:projectivegrothendieck}
Suppose $\Lambda=B'$ or $B$. Let $C_0^-(\Lambda)$ be the full subcategory of
$C^-(\Lambda)$ whose objects are bounded above complexes $M^\bullet$ with $M^i=0$ for $i>0$.
Then $C_0^-(\Lambda)$ is an abelian category with enough projective objects.
More precisely, we have the following result which provides a slight correction
of \cite[Lemma 11.5.2.1]{ega3}, but which is proved in a similar fashion.

Let $\mathcal{P}$ be the set of all complexes $P^\bullet=(P^{-n})_{n\ge 0}$
 in $C_0^-(\Lambda)$ having the following 
properties: Every $P^{-n}$ is projective, $\BB^{-n}(P^\bullet)$ is a direct summand of $P^{-n}$ for
$n\ge 0$, and $\BB^{-n}(P^\bullet)=\ZZ^{-n}(P^\bullet)$ for $n\ge 1$. Then
\begin{itemize}
\item[(i)] $\mathcal{P}$ is the set of projective objects in $C_0^-(\Lambda)$,
and
\item[(ii)] every $M^\bullet$ in $C_0^-(\Lambda)$ is a homomorphic image of a complex in 
$P^\bullet\in\mathcal{P}$.
\end{itemize}

Note that $P^\bullet\in\mathcal{P}$ is not acyclic in general,
but that $\HH^{-n}(P^\bullet)=0$ for
$n\ge 1$ and $\HH^0(P^\bullet)$ is a projective pseudocompact $\Lambda$-module. 
\end{rem}

We will use a projective
resolution $P^{\bullet,\bullet}$ of $Z^\bullet$ in the category $C_0(B')$
of the following kind.

\begin{dfn}
\label{def:spectralseqA}
Choose a resolution of $Z^\bullet$ by projective objects in $C_0^-(B')$
\begin{equation}
\label{eq:doublep}
\cdots \to P^{-2,\bullet}\to P^{-1,\bullet}\to P^{0,\bullet}\xrightarrow{\epsilon} Z^\bullet\to 0
\end{equation}
such that $P^{-x,-y}=0$ unless $x\ge 0$ and $0\le y\le p_0$. 

Note that $P^{\bullet,\bullet}$ has commuting
differentials $d'_P$ and $d''_P$. 
We use the same convention as in \cite[\S (11.3) of Chap. 0]{ega3}
with respect to the differential of the total complex $\mathrm{Tot}(P^{\bullet,\bullet})$. Namely,  
$\mathrm{Tot}(P^{\bullet,\bullet})^{-n}=\bigoplus_{-x-y=-n}P^{-x,-y}$ and the differential
is given by $d\,a=d'_P\,a+(-1)^x\,d''_P\,a$ for $a\in P^{-x,-y}$. 

Define the map $\pi_P:
\mathrm{Tot}(P^{\bullet,\bullet})\to Z^\bullet$ by letting $\pi_P^{-n}:\mathrm{Tot}(P^{\bullet,\bullet})^{-n}
\to Z^{-n}$ be the composition of the natural projection $\mathrm{Tot}(P^{\bullet,\bullet})^{-n}\to P^{0,-n}$ 
with $\epsilon^{-n}: P^{0,-n}\to Z^{-n}$. Then $\pi_P$ defines a quasi-isomorphism in $C_0^-(B')$ 
that is surjective on terms.
\end{dfn}

Using the projective resolution $P^{\bullet,\bullet}$ of $Z^\bullet$ in $C_0(B')$, 
we can describe the spectral sequence as follows.

\begin{dfn}
\label{def:spectralseqA1}
Assume the notation of Definition $\ref{def:spectralseqA}$.
Taking the contravariant functor
$$\mathrm{Hom}_B(-,J\hat{\otimes}_AZ^\bullet) : \mathrm{PCMod}(B)\to C^-(B),$$
one shows similarly to \cite[\S (11.7) of Chap. 0]{ega3} that there is a convergent spectral sequence
\begin{equation}
\label{eq:spectral0}
\HH^{p}(\mathbf{R}\mathrm{Hom}^\bullet_B(\HH_{I}^{-q}(A\hat{\otimes}_{A'}P^{\bullet,\bullet}),
J\hat{\otimes}_AZ^\bullet))\;\Longrightarrow\; 
\HH^{p+q}(\mathbf{R}\mathrm{Hom}^\bullet_B(A\hat{\otimes}_{A'}P^{\bullet,\bullet},
J\hat{\otimes}_AZ^\bullet)).
\end{equation}
Here $\HH_{I}^{-q}(A\hat{\otimes}_{A'}P^{\bullet,\bullet})$ is the complex resulting from
taking the $-q^{\mathrm{th}}$ cohomology in the first direction of $A\hat{\otimes}_{A'}P^{\bullet,\bullet}$.
Using that
$\mathbf{R}\mathrm{Hom}^\bullet_B(A\hat{\otimes}_{A'}P^{\bullet,\bullet},
J\hat{\otimes}_AZ^\bullet)\cong \mathbf{R}\mathrm{Hom}^\bullet_{B'}(P^{\bullet,\bullet},
J\hat{\otimes}_AZ^\bullet)$,
the spectral sequence $(\ref{eq:spectral0})$ becomes
\begin{equation}
\label{eq:spectral}
E_2^{p,q}=\mathrm{Ext}^p_{D^-(B)}(\HH_{I}^{-q}(A\hat{\otimes}_{A'}P^{\bullet,\bullet}),
J\hat{\otimes}_AZ^\bullet)\;\Longrightarrow\;
\mathrm{Ext}^{p+q}_{D^-(B')}(Z^\bullet, J\hat{\otimes}_AZ^{\bullet}).
\end{equation}
Note that $\HH_{I}^{-q}(A\hat{\otimes}_{A'}P^{\bullet,\bullet})$ is the Tor complex
$\mathcal{H}^{-q}(A \hat{\otimes}_{A'}^{\LL} Z^\bullet)$ from $(\ref{eq:nightmare})$.
\end{dfn}

The proof of the convergence of the spectral sequence $(\ref{eq:spectral0})$ relies on the
existence of a projective Cartan-Eilenberg resolution $M^{\bullet,\bullet,\bullet}$
of $A\hat{\otimes}_{A'}P^{\bullet,\bullet}$. Moreover, the triple complex
$M^{\bullet,\bullet,\bullet}$ allows us to
realize the spectral sequence $(\ref{eq:spectral})$ as a spectral sequence of a double complex
$L^{\bullet,\bullet}$ relative to the second filtration of $\mathrm{Tot}(L^{\bullet,\bullet})$. 
We now give the definition of $M^{\bullet,\bullet,\bullet}$  and $L^{\bullet,\bullet}$.

\begin{dfn}
\label{def:spectralseqB}
Let $P^{\bullet,\bullet}$ be as in Definition $\ref{def:spectralseqA}$.
As described in \cite[\S (11.7) of Chap. 0]{ega3},
$A\hat{\otimes}_{A'}P^{\bullet,\bullet}$ admits a 
projective Cartan-Eilenberg resolution $M^{\bullet,\bullet,\bullet}=(M^{-x,-y,-z})$ where 
$x,z\ge 0$ and $0\le y\le p_0$.
This means that the terms  $M^{-x,-y,-z}$ are projective pseudocompact $B$-modules,
and for all $x$, $M^{-x,\bullet,\bullet}$ (resp. $\BB^{-x}_{I}(M^{\bullet,\bullet,\bullet})$,
resp. $\ZZ^{-x}_{I}(M^{\bullet,\bullet,\bullet})$, resp. $\HH^{-x}_{I}(M^{\bullet,\bullet,\bullet})$)
forms a projective resolution of $A\hat{\otimes}_{A'}(P^{-x,\bullet})$ (resp. 
$\BB^{-x}_{I}(A\hat{\otimes}_{A'}P^{\bullet,\bullet})$, resp. 
$\ZZ^{-x}_{I}(A\hat{\otimes}_{A'}P^{\bullet,\bullet})$, resp. 
$\HH^{-x}_{I}(A\hat{\otimes}_{A'}P^{\bullet,\bullet})$) in the abelian category $C_0^-(B)$.
In particular, $M^{-x,-y,\bullet}\to
A\hat{\otimes}_{A'}P^{-x,-y}\to 0$ is a projective resolution in the category $\mathrm{PCMod}(B)$ 
for all $x,y$.
The Cartan-Eilenberg property implies that we have for all $x,z$ split exact sequences of
complexes in $C_0^-(B)$
\begin{eqnarray}
\label{eq:carteil1}
0\to \BB^{-x}_{I}(M^{\bullet,\bullet,-z})\to \ZZ^{-x}_{I}(M^{\bullet,\bullet,-z}) \to 
\HH^{-x}_{I}(M^{\bullet,\bullet,-z})\to 0,\\
\label{eq:carteil2}
0\to \ZZ^{-x}_{I}(M^{\bullet,\bullet,-z}) \to M^{-x,\bullet,-z} \xrightarrow{d_{M,x}}
 \BB^{-x+1}_{I}(M^{\bullet,\bullet,-z})\to 0.
\end{eqnarray}

Since $M^{\bullet,\bullet,\bullet}$ has commuting
differentials $d_{M,x}$, $d_{M,y}$ and $d_{M,z}$, we use again the convention in 
\cite[\S (11.3) of Chap. 0]{ega3} with respect to the differential of the total complex 
$\mathrm{Tot}(M^{\bullet,\bullet,\bullet})$. 
Define the map $\pi_M:\mathrm{Tot}(M^{\bullet,\bullet,\bullet})\to
\mathrm{Tot}(A\hat{\otimes}_{A'}P^{\bullet,\bullet})$ by letting
$\pi_M^{-n}$ be the composition of the natural projection 
$\mathrm{Tot}(M^{\bullet,\bullet,\bullet})^{-n}\to \bigoplus_{-x-y=-n}M^{-x,-y,0}$ 
with the direct sum of the surjections $M^{-x,-y,0}\to  A\hat{\otimes}_{A'}P^{-x,-y}$.
Then $\pi_M$ defines a quasi-isomorphism in 
$C_0^-(B')$ that is surjective on terms.

Define a double complex $L^{\bullet,\bullet}$ of $B$-modules by
\begin{equation}
\label{eq:double}
L^{q,p}=\bigoplus_{-i+y+z=p} \mathrm{Hom}_B(M^{-q,-y,-z},J\hat{\otimes}_AZ^{-i}).
\end{equation}
Since $1\le i\le p_0$, $0\le y\le p_0$ 
and $z\ge 0$, it follows that for each integer $p$, there are only finitely many 
triples $(y,z,i)$ with $-i+y+z=p$. So we could also have used $\prod$ instead of
$\bigoplus$ in defining $L^{q,p}$.
Note that $L^{q,p}=0$ unless $q\ge 0$ and $p\ge -p_0$. In particular, for each integer $n$
there are only finitely many pairs $(q,p)$ with $q+p=n$ and $L^{q,p}\neq 0$. 
The differentials
$$d_I^{q,p} :L^{q,p}\to L^{q+1,p} \qquad\mbox{and}\qquad
d_{II}^{q,p}:L^{q,p}\to L^{q,p+1}$$
are described as follows:
\begin{eqnarray}
\label{eq:Ldifferential1}
d_I^{q,p}(g) &=& g\circ d_{M,x}^{-q-1,-y,-z},\\
\label{eq:Ldifferential2}
d_{II}^{q,p}(g) &=& g\circ d_{M,y}^{-q,-y-1,-z} + (-1)^y\,g\circ d_{M,z}^{-q,-y,-z-1} + (-1)^{p+1}\,
d_{J\hat{\otimes}_AZ}^{-i}\circ g
\end{eqnarray}
for $g\in \mathrm{Hom}_B(M^{-q,-y,-z},J\hat{\otimes}_AZ^{-i})$.
Since $d_I$ and $d_{II}$ commute, the total complex of $L^{\bullet,\bullet}$ whose 
$n^{\mathrm{th}}$ term is $\mathrm{Tot}(L^{\bullet,\bullet})^n=\bigoplus_{q+(-i+y+z)=n} 
\mathrm{Hom}_B(M^{-q,-y,-z},J\hat{\otimes}_AZ^{-i})$ has differential
$d$ with $d\,g = d_I^{q,p}(g)+(-1)^q\,d_{II}^{q,p}(g)$ for 
$g\in \mathrm{Hom}_B(M^{-q,-y,-z},J\hat{\otimes}_AZ^{-i})$.
Note that $\mathrm{Tot}(L^{\bullet,\bullet})$ is the total
Hom complex corresponding to the quadruple complex 
$\left(\mathrm{Hom}_B(M^{-q,-y,-z},J\hat{\otimes}_AZ^{-i})\right)_{q,y,z,i}$.
\end{dfn}

The following definition pertains to realizing the spectral sequence $(\ref{eq:spectral})$ as the
spectral sequence of $L^{\bullet,\bullet}$ relative to the second filtration of 
$\mathrm{Tot}(L^{\bullet,\bullet})$. This then leads to the sequences of low degree
terms corresponding to $(\ref{eq:spectral})$.

\begin{dfn}
\label{def:spectralseqB1}
Assume the notation of Definitions $\ref{def:spectralseqA}$ - $\ref{def:spectralseqB}$.
Let $\left( F_{II}^r(\mathrm{Tot}(L^{\bullet,\bullet}))\right)_{r\in\mathbb{Z}}$ 
be the filtration of the total complex $\mathrm{Tot}(L^{\bullet,\bullet})$ defined by
\begin{equation}
\label{eq:filtration}
F_{II}^r(\mathrm{Tot}(L^{\bullet,\bullet}))^n=\bigoplus_{q+p=n, p\ge r} L^{q,p} .
\end{equation}
Define $F_{II}^r\,\HH^n(\mathrm{Tot}(L^{\bullet,\bullet}))$ to be  the image in $\HH^n(
\mathrm{Tot}(L^{\bullet,\bullet}))$ of the $n$-cocycles in $F_{II}^r(\mathrm{Tot}(L^{\bullet,\bullet}))$,
i.e. of the elements in $F_{II}^r(\mathrm{Tot}(L^{\bullet,\bullet}))^n$ that are
in the kernel of the $n^{\mathrm{th}}$ differential of $\mathrm{Tot}(L^{\bullet,\bullet})$.

The spectral sequence $(\ref{eq:spectral})$ coincides with the spectral sequence of 
the double complex $L^{\bullet,\bullet}$
relative to the filtration $\left( F_{II}^r(\mathrm{Tot}(L^{\bullet,\bullet}))\right)_{r\in\mathbb{Z}}$ 
of $\mathrm{Tot}(L^{\bullet,\bullet})$ in $(\ref{eq:filtration})$.
In particular, $E_2^{p,q}=\HH_{II}^p(\HH_I^q(L^{\bullet,\bullet}))$ and
$\HH^{p+q}(\mathrm{Tot}(L^{\bullet,\bullet})) = 
\mathrm{Ext}^{p+q}_{D^-(B')}(Z^\bullet, J\hat{\otimes}_AZ^{\bullet})$.

We have a short exact sequence of low degree terms
\begin{equation}
\label{eq:lowdegree0}
0\to E_\infty^{1,0}\xrightarrow{\psi_{II}^0} F_{II}^0\,\HH^1(\mathrm{Tot}(L^{\bullet,\bullet})) 
\xrightarrow{\varphi_{II}^0} E_\infty^{0,1}\to 0.
\end{equation}
Here $E_\infty^{1,0}$ is the quotient of $E_2^{1,0}$ by the subgroup $W_2^{1,0}$ 
which is defined as the sum of the preimages in $E_2^{1,0}$ of the successive
images of $d_2^{-1,1}, d_3^{-2,2},\ldots$. Similarly $E_\infty^{0,1}$ is the quotient of 
$\mathrm{Ker}(d_2^{0,1})$ by the subgroup $W_2^{0,1}$ which is defined as 
the sum of the preimages in $\mathrm{Ker}(d_2^{0,1})$ of the successive images
of $d_2^{-2,2},d_3^{-3,3},\ldots$. Since $d_2^{0,1}:E_2^{0,1}\to E_2^{2,0}$ sends
$W_2^{0,1}$ identically to zero, the short exact sequence $(\ref{eq:lowdegree0})$ results in an exact 
sequence of low degree terms
\begin{equation}
\label{eq:lowdegree}
0\to E_\infty^{1,0} \xrightarrow{\psi_{II}^0} F_{II}^0\,\HH^1(\mathrm{Tot}(L^{\bullet,\bullet})) 
\xrightarrow{\tilde{\varphi}^0_{II}}  E_2^{0,1}/W_2^{0,1}
\xrightarrow{\overline{d_2^{0,1}}} E_2^{2,0}.
\end{equation}
\end{dfn}


\subsection{Obstruction results}
\label{ss:obs}

In this subsection we list the main results concerning the obstruction
to lifting $(Z^\bullet,\zeta)$ to $A'$.
A key ingredient is a careful analysis of the exact
sequence of low degree terms in $(\ref{eq:lowdegree0})$.
The following definition is used to relate the term $F_{II}^0\,\HH^1(\mathrm{Tot}(L^{\bullet,\bullet}))$
in $(\ref{eq:lowdegree0})$ to extension classes
arising from short exact sequences of bounded above complexes of pseudocompact
$B'$-modules.

\begin{dfn}
\label{def:gabberclasses}
In $\mathrm{Ext}^1_{D^-(B')}(Z^\bullet, J\hat{\otimes}_{A}Z^\bullet)$ let $\tilde{F}_{II}^0$
be the subset of classes represented by short exact sequences in $C^-(B')$
\begin{equation}
\label{eq:gabberclass}
\xi:\qquad 0\to X^\bullet\xrightarrow{u_\xi} Y^\bullet \xrightarrow{v_\xi} Z^\bullet \to 0
\end{equation}
such that the terms of $X^\bullet$ are annihilated by $J$, and there is an isomorphism 
$h_\xi:X^\bullet \to J\hat{\otimes}_{A}Z^\bullet$ in $D^-(B)$.
Note that $h_\xi$ defines an isomorphism in $D^-(B')$. 
The triangle associated to the sequence
$\xi$ in $(\ref{eq:gabberclass})$ has the form
\begin{equation}
\label{eq:gabbertriangle}
X^\bullet\xrightarrow{u_\xi} Y^\bullet \xrightarrow{v_\xi} Z^\bullet\xrightarrow{w_\xi}X^\bullet[1]
\end{equation}
where $\eta_\xi=h_\xi[1]\circ w_\xi\in \mathrm{Hom}_{D^-(B')}(Z^\bullet, J\hat{\otimes}_{A}Z^\bullet[1])=
\mathrm{Ext}^1_{D^-(B')}(Z^\bullet,J\hat{\otimes}_{A}Z^\bullet)$ is the class
represented by $(\xi,h_\xi)$.
Applying the functor $A\hat{\otimes}_{A'}-$ to  $(\ref{eq:gabberclass})$ gives the long exact
Tor sequence in $C^-(B)$
\begin{equation}
\label{eq:torseq}
\cdots \to \mathrm{Tor}^{A'}_1(Y^\bullet,A)\to \mathrm{Tor}^{A'}_1(Z^\bullet, A)
\xrightarrow{f_\xi} X^\bullet \to A\hat{\otimes}_{A'}Y^\bullet \to Z^\bullet \to 0
\end{equation}
where $\mathrm{Tor}^{A'}_1(Z^\bullet, A)=\HH_I^{-1}(A\hat{\otimes}_{A'}P^{\bullet,\bullet})$
since $P^{\bullet,\bullet}$ in Definition \ref{def:spectralseqA} is a projective resolution
of $Z^\bullet$.
\end{dfn}

\begin{thm}
\label{thm:bigobstructionthm}
Assume Hypothesis $\ref{hypo:obstruct}$ and the notation introduced in Definitions 
$\ref{def:spectralseqA}$ - $\ref{def:gabberclasses}$.
The short exact sequence $(\ref{eq:lowdegree0})$ has the following properties.
\begin{enumerate}
\item[(i)] The group $F_{II}^0\,\HH^1(\mathrm{Tot}(L^{\bullet,\bullet}))$ equals the subset 
$\tilde{F}_{II}^0$ from Definition $\ref{def:gabberclasses}$.
\item[(ii)] The image of $E_\infty^{1,0}$ under $\psi_{II}^0$
in $F_{II}^0\,\HH^1(\mathrm{Tot}(L^{\bullet,\bullet}))=
\tilde{F}_{II}^0$ is equal to the subset of $\tilde{F}_{II}^0$ consisting of classes represented
by short exact sequences as in $(\ref{eq:gabberclass})$ where $Y^\bullet$ is in $C^-(B)$.
\item[(iii)] The map $\varphi_{II}^0:F_{II}^0\,\HH^1(\mathrm{Tot}(L^{\bullet,\bullet}))\to  E_\infty^{0,1}$
is defined in the following way.
Represent a class in $F_{II}^0\,\HH^1(\mathrm{Tot}(L^{\bullet,\bullet}))=\tilde{F}_{II}^0$ 
by $(\xi,h_\xi)$ as in Definition $\ref{def:gabberclasses}$. Let $f_\xi: \mathrm{Tor}^{A'}_1
(Z^\bullet, A)=\HH_I^{-1}(A\hat{\otimes}_{A'}P^{\bullet,\bullet})\to X^\bullet$ be as in
$(\ref{eq:torseq})$. Then $(\xi,h_\xi)$ is sent to the class of $h_\xi\circ f_\xi$ in $E_\infty^{0,1}$.
\end{enumerate}
\end{thm}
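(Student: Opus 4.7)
The plan is to reduce the three statements to a concrete bijection between cocycles in the double complex $L^{\bullet,\bullet}$ and extension data of the form $(\xi,h_\xi)$, and then to read off (ii) and (iii) from this bijection. The heart of the argument is part (i); parts (ii) and (iii) follow by tracking which component of the cocycle encodes which piece of the extension.

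For (i), I would construct two maps and show they are mutually inverse at the level of cohomology classes. Going from extensions to cocycles: given $(\xi,h_\xi)$, use the projectivity in $C_0^-(B')$ of the complexes $P^{-x,\bullet}\in\mathcal{P}$ to lift $\epsilon:P^{0,\bullet}\to Z^\bullet$ through $v_\xi$, extending to a compatible system of lifts of all $P^{-x,\bullet}\to Z^\bullet$ through $v_\xi$. The failure of these lifts to be strict chain maps factors through $u_\xi:X^\bullet\hookrightarrow Y^\bullet$; composing with $h_\xi$ and then with the Cartan-Eilenberg augmentation $M^{-x,-y,0}\twoheadrightarrow A\hat{\otimes}_{A'}P^{-x,-y}$ produces an element of the $L^{1,0}$-summand of $F_{II}^0(\mathrm{Tot}(L^{\bullet,\bullet}))^1$, with a correction term in $L^{0,1}$ coming from the $z$-differential of $M^{\bullet,\bullet,\bullet}$; the cocycle condition (\ref{eq:Ldifferential1})--(\ref{eq:Ldifferential2}) is forced by the axioms of a Cartan-Eilenberg resolution. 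Going from cocycles to extensions: given a cocycle in $F_{II}^0$, project onto $L^{1,0}$ and reinterpret this datum, using the splitting sequences (\ref{eq:carteil1})--(\ref{eq:carteil2}), as a morphism $\kappa\in\mathrm{Hom}_{D^-(B)}(A\hat{\otimes}_{A'}T^\bullet,J\hat{\otimes}_A Z^\bullet)$ for an appropriate truncation $T^\bullet$ of $P^{\bullet,\bullet}$; then apply the Gabber-style pushout (as in the sketch at the end of \S\ref{ss:spectralseqroad} and to be formalized in \S\ref{ss:gabber}) to produce $(\xi,h_\xi)$. The well-definedness up to coboundaries matches exactly the two ambiguities in the construction: the choice of lift of $\epsilon$ through $v_\xi$ (modifying by $d_I$-coboundaries in $L^{0,0}$) and the choice of lift into the Cartan-Eilenberg resolution (modifying by $d_{II}$-coboundaries).

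For (ii), after identifying $F_{II}^0\,\HH^1(\mathrm{Tot}(L^{\bullet,\bullet}))$ with $\tilde F_{II}^0$, the subquotient $E_\infty^{1,0}=E_2^{1,0}/W_2^{1,0}$ is exactly represented by cocycles whose $L^{0,1}$-component vanishes in cohomology; tracing the construction above, this vanishing translates into $v_\xi$ admitting a section as a map of $B$-module complexes, hence $Y^\bullet$ being (isomorphic to) a complex in $C^-(B)$. For (iii), the map $\varphi_{II}^0$ is induced by projection $L^{1,0}\oplus L^{0,1}\twoheadrightarrow L^{0,1}$ followed by the identification $\HH_I^0(\mathrm{Hom}_B(M^{\bullet,-1,-z},J\hat{\otimes}_A Z^\bullet))\cong\mathrm{Hom}_B(\HH_I^{-1}(A\hat{\otimes}_{A'}P^{\bullet,\bullet}),J\hat{\otimes}_A Z^\bullet)$; the $L^{0,1}$-component of the cocycle built from $(\xi,h_\xi)$ is, by construction, the composite of $h_\xi$ with the connecting map of the Tor sequence (\ref{eq:torseq}), which is precisely $h_\xi\circ f_\xi$. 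The main obstacle I anticipate is verifying in (i) that the two constructions are inverse on cohomology classes (rather than merely on chain-level representatives): this requires a careful calculation showing that different choices of lifts and of pushouts differ by genuine coboundaries in $\mathrm{Tot}(L^{\bullet,\bullet})$ and by genuine equivalences of extensions, which is exactly the content of the preparatory Lemma \ref{lem:gabberfilter}.
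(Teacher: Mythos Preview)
Your overall strategy---passing between extensions $(\xi,h_\xi)$ and cocycles in $F_{II}^0$ via the Gabber pushout---is the right one and matches the paper's approach. However, you have the indexing of the double complex reversed throughout (ii) and (iii), and this propagates into genuine errors.

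By Remark~\ref{rem:extraextra}, elements of $E_2^{p,q}$ are represented in $L^{q,p}$, not $L^{p,q}$. Thus $E_\infty^{0,1}$ (a subquotient of $E_2^{0,1}$) lives in $L^{1,0}$, while $E_\infty^{1,0}$ lives in $L^{0,1}$; the filtration step $F_{II}^1\,\HH^1$ is the image of cocycles supported in $L^{0,1}$. Consequently your description of $\varphi_{II}^0$ in (iii) as projection onto $L^{0,1}$ is backwards: it is projection onto the $L^{1,0}$-component. In the paper's construction (Lemma~\ref{lem:gabberfilter}(i)) the cocycle $\beta_\xi$ attached to $(\xi,h_\xi)$ lies \emph{entirely} in $L^{1,0}$ with $d_I^{1,0}(\beta_\xi)=0=d_{II}^{1,0}(\beta_\xi)$---there is no $L^{0,1}$ correction term---and this same $\beta_\xi$ represents both the class in $F_{II}^0$ and its image $h_\xi\circ f_\xi$ in $E_\infty^{0,1}$, obtained by restricting the representing map $\kappa_{\xi,\sharp}$ of (\ref{eq:kappaduo}) to $\mathrm{Tot}(\HH_I^{-1}(M^{\bullet,\bullet,\bullet}))$.

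Two further points. First, your argument for (ii) (``$v_\xi$ admits a $B$-linear section'') is not the right characterization: the paper shows (Lemma~\ref{lem:gabberconstruct}(iii) and Lemma~\ref{lem:gabberfilter}(iii)) that $\psi_{II}^0(E_\infty^{1,0})=\tilde{F}_{II}^1$ by identifying these classes with those $\eta_\kappa$ for which $\kappa$ factors as $\alpha\circ\tau$ through the $B$-complex $D^\bullet$, so that the pushout (\ref{eq:pushoutalpha}) can be taken entirely in $C^-(B)$. Second, for the reverse inclusion $F_{II}^0\,\HH^1\subseteq\tilde{F}_{II}^0$ in (i), the paper does not ``project a general cocycle onto $L^{1,0}$ and reinterpret it as a $\kappa$'' (which would not be well-defined up to coboundary); rather it argues indirectly: $\tilde{F}_{II}^0$ surjects onto $E_\infty^{0,1}$ (part (ii) of Lemma~\ref{lem:gabberfilter}), its fibres are cosets of $\psi_{II}^0(E_\infty^{1,0})$ (part (iv)), and $\psi_{II}^0(E_\infty^{1,0})\subseteq\tilde{F}_{II}^0$ (part (iii)), whence equality with $F_{II}^0\,\HH^1$.
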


We obtain the following connection between the local isomorphism classes of quasi-lifts
of $(Z^\bullet,\zeta)$ over $A'$ and the classes in 
$F_{II}^0\,\HH^1(\mathrm{Tot}(L^{\bullet,\bullet}))=\tilde{F}_{II}^0$
defined by short exact sequences $\xi$ as in $(\ref{eq:gabberclass})$.

\begin{lemma}
\label{lem:extra}
Assume the hypotheses of Theorem $\ref{thm:bigobstructionthm}$.
If $(Z^\bullet,\zeta)$ has a quasi-lift over $A'$, then the local isomorphism class of every quasi-lift of 
$(Z^\bullet,\zeta)$ over $A'$ contains a quasi-lift $(Y^\bullet,\upsilon)$ such that $Y^\bullet$ occurs 
as the middle term of a short exact sequence $\xi$ as in $(\ref{eq:gabberclass})$.
\end{lemma}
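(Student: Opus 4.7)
The plan is to produce, within the local isomorphism class of the given quasi-lift $(Y_0^\bullet,\upsilon_0)$, a representative $(Y^\bullet,\upsilon)$ such that $A\hat{\otimes}_{A'}Y^\bullet=Z^\bullet$ as complexes in $C^-(B)$ on the nose. The tautological short exact sequence $0\to JY^\bullet\to Y^\bullet\to Z^\bullet\to 0$ in $C^-(B')$ will then serve as the desired $\xi$: since each $Y^i$ will be topologically flat over $A'$, the kernel $JY^\bullet$ is annihilated by $J$ and coincides termwise with $J\hat{\otimes}_AZ^\bullet$, with the inherited differentials matching those of $J\hat{\otimes}_AZ^\bullet$, so that the isomorphism $h_\xi$ can be taken to be the tautological identification already in $C^-(B)$.

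To construct such a $Y^\bullet$, I would adapt the naive method of \S\ref{ss:naive}. By Theorem \ref{thm:derivedresult} applied over $A'$, we may assume $Y_0^\bullet$ has topologically free pseudocompact $B'$-module terms. For each $i\ne-p_0$ the term $Z^i$ is projective over $B$, so it lifts to a projective pseudocompact $B'$-module $Y^i$ with $A\hat{\otimes}_{A'}Y^i=Z^i$; for $i=-p_0$ we likewise choose a topologically free pseudocompact $A'$-module $Y^{-p_0}$ equipped with a compatible $B'$-action lifting the $B$-action on $Z^{-p_0}$ (the existence of such a lift is guaranteed by the existence of the quasi-lift $Y_0^\bullet$, which provides the required $G$-action on a suitable topologically free $A'$-module after identifying its reduction modulo $J$ with $Z^{-p_0}$ via a chain-level realization of $\upsilon_0$, available since $A\hat{\otimes}_{A'}Y_0^\bullet$ has topologically free, hence projective, $B$-terms). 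Next we choose maps $c_Y^j:Y^j\to Y^{j+1}$ lifting $d_Z^j$; the obstruction to assembling $(Y^\bullet,c_Y)$ into a genuine complex is exactly $\omega(Z^\bullet,A')$, which vanishes by hypothesis since a quasi-lift exists, so we may correct the $c_Y^j$ accordingly to produce a complex $Y^\bullet\in C^-(B')$ with $A\hat{\otimes}_{A'}Y^\bullet=Z^\bullet$.

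The main obstacle will be verifying that the resulting $(Y^\bullet,\upsilon)$, with $\upsilon$ induced by the identification $A\hat{\otimes}_{A'}Y^\bullet=Z^\bullet$, lies in the given local isomorphism class of $(Y_0^\bullet,\upsilon_0)$ rather than in some other local isomorphism class of quasi-lifts over $A'$. This amounts to exploiting the flexibility in the choice of the lifts $Y^i$ and in the correction of the differentials $c_Y^j$: as announced at the end of \S\ref{ss:naive}, this flexibility is parametrized by the action of $\mathrm{Ext}^1_{D^-(B)}(Z^\bullet,J\hat{\otimes}_AZ^\bullet)$ on the torsor of local isomorphism classes, with the relevant $1$-cocycle being read off from the difference $c_Y^j-d_{Y_0}^j$ after matching up the underlying graded $B'$-modules of $Y^\bullet$ and $Y_0^\bullet$ term by term. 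Careful tracking of this action then allows us to arrange $(Y^\bullet,\upsilon)$ to lie in the prescribed local isomorphism class, completing the construction of $\xi$.
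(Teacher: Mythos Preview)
Your approach has a circularity problem. The final step---adjusting the correction in the naive construction so that the resulting $(Y^\bullet,\upsilon)$ lands in the \emph{given} local isomorphism class of $(Y_0^\bullet,\upsilon_0)$---requires knowing that the naive $\mathrm{Ext}^1_{D^-(B)}(Z^\bullet,J\hat{\otimes}_AZ^\bullet)$-action on such lifts is transitive on local isomorphism classes. But in the paper's logical structure this transitivity is part of Theorem~\ref{thm:obstructions}(ii), whose proof (via Lemma~\ref{lem:gabberlift}(ii)(b)) uses Lemma~\ref{lem:extra} itself to show surjectivity of the map $\Xi\to\Upsilon$. The announcement you cite at the end of \S\ref{ss:naive} is explicitly a forward reference to Proposition~\ref{prop:compare}, which again rests on Lemma~\ref{lem:gabberlift}. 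So you are invoking exactly the result you are trying to establish.

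The paper's proof (Lemma~\ref{lem:takethisout}) avoids this entirely and is much more direct: starting from the given $(Y^\bullet,\upsilon)$ with good terms, it realizes $\upsilon^{-1}$ by an actual chain map $\chi:Z^\bullet\to A\hat{\otimes}_{A'}Y^\bullet$ in $C^-(B)$ (possible since $Z^i$ is projective over $B$ for $i>-p_0$), and then takes the \emph{pullback} of the tautological extension $0\to J\hat{\otimes}_{A'}Y^\bullet\to Y^\bullet\to A\hat{\otimes}_{A'}Y^\bullet\to 0$ along $\chi$. The resulting ${Y'}^\bullet$ automatically sits in a short exact sequence of the required shape with right-hand term $Z^\bullet$, and the pullback map $\chi_Y:{Y'}^\bullet\to Y^\bullet$ is a quasi-isomorphism because $\chi$ is; a short check shows this gives a local isomorphism of quasi-lifts. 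No obstruction theory or torsor structure is needed.
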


The obstruction $\omega(Z^\bullet,A')$ to lifting $(Z^\bullet,\zeta)$ to $A'$ is defined in terms
of the following natural homomorphism in $C^-(B)$.

\begin{dfn}
\label{def:iota}
Let $\iota:\HH^{-1}_I(A\hat{\otimes}_{A'}P^{\bullet,\bullet})=
\mathrm{Tor}^{A'}_1(Z^\bullet,A)\to J\hat{\otimes}_{A}Z^\bullet$ 
be the natural homomorphism in $C^-(B)$ resulting from
tensoring the short exact sequence $0\to  J \to A'\to A\to 0$ with $Z^\bullet$ over $A'$.
Because the terms of $Z^\bullet$ are topologically flat $A$-modules by Hypothesis \ref{hypo:obstruct} ,
we get an exact sequence in $C^-(B)$
\begin{equation}
\label{eq:needthislater}
0\to \HH_I^{-1}(A\hat{\otimes}_{A'}P^{\bullet,\bullet})\xrightarrow{\iota}
J\hat{\otimes}_{A'}Z^\bullet\to A'\hat{\otimes}_{A'}Z^\bullet \xrightarrow{\cong}
A\hat{\otimes}_{A'}Z^\bullet \to 0.
\end{equation}
Hence $\iota$ is an isomorphism in $C^-(B)$.
\end{dfn}

\begin{thm}
\label{thm:obstructions}
Assuming the hypotheses of Theorem $\ref{thm:bigobstructionthm}$, 
let $\iota:\HH^{-1}_I(A\hat{\otimes}_{A'}P^{\bullet,\bullet})\to J\hat{\otimes}_{A}Z^\bullet$ 
be the isomorphism in $C^-(B)$ from Definition $\ref{def:iota}$.
If $[\iota]$ is the class of $\iota$ in $E_2^{0,1}/W_2^{0,1}$, let $\omega=\omega(Z^\bullet,A')$
be the class $\omega=\overline{d_2^{0,1}}([\iota])
=d_2^{0,1}(\iota)
\in E_2^{2,0}=\mathrm{Ext}^2_{D^-(B)}(Z^\bullet, J\hat{\otimes}_AZ^\bullet)$. 
\begin{enumerate}
\item[(i)] The class $\omega$ is zero if and only if there is a quasi-lift $(Y^\bullet,\upsilon)$ of 
$(Z^\bullet,\zeta)$ over $A'$.
\item[(ii)] If $\omega=0$, then $[\iota]\in E_\infty^{0,1}$ and
the set of all local isomorphism classes of quasi-lifts of $(Z^\bullet,\zeta)$ over 
$A'$ is in bijection with the full preimage of $[\iota]$ 
in $F_{II}^0\,\HH^1(\mathrm{Tot}(L^{\bullet,\bullet}))=\tilde{F}_{II}^0$ under $\varphi_{II}^0$.
In other words, the set of all local isomorphism classes of quasi-lifts of $(Z^\bullet,\zeta)$ over 
$A'$ is a principal homogeneous space for $E_\infty^{1,0}$.
\item[(iii)] If $\omega=0$, then $E_2^{p,0}=E_\infty^{p,0}$ for all $p$, i.e. the spectral sequence 
$(\ref{eq:spectral})$ partially degenerates. 
\end{enumerate}
\end{thm}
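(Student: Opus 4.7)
The plan is to deduce all three conclusions from the exact sequence of low degree terms (\ref{eq:lowdegree}), together with the concrete identifications for $F_{II}^0\,\HH^1(\mathrm{Tot}(L^{\bullet,\bullet}))$ and the map $\varphi_{II}^0$ supplied by Theorem \ref{thm:bigobstructionthm}, and the replacement statement of Lemma \ref{lem:extra}. Throughout, the strategy parallels the module case sketched in \S\ref{ss:spectralseqroad}, with the lifting problem translated into a question about which classes in $F_{II}^0\,\HH^1(\mathrm{Tot}(L^{\bullet,\bullet}))=\tilde{F}_{II}^0$ hit $[\iota]$ under $\tilde{\varphi}_{II}^0$.

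For part (i), the converse direction is the substantive one. If $\omega=d_2^{0,1}(\iota)=0$, then $[\iota]$ lies in $\mathrm{Ker}(\overline{d_2^{0,1}})=E_\infty^{0,1}$, and by the exactness of (\ref{eq:lowdegree}) combined with Theorem \ref{thm:bigobstructionthm}(i), there is a pair $(\xi,h_\xi)\in\tilde{F}_{II}^0$ whose image under $\tilde{\varphi}_{II}^0$ equals $[\iota]$. By Theorem \ref{thm:bigobstructionthm}(iii), that image equals the class of $h_\xi\circ f_\xi$, so after modifying the choice of $(\xi,h_\xi)$ within its fiber I may assume $h_\xi\circ f_\xi=\iota$ on the nose. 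I would then substitute this equality into the Tor sequence (\ref{eq:torseq}), and exploit the isomorphism (\ref{eq:needthislater}) to conclude that the map $A\hat{\otimes}_{A'}Y^\bullet\to Z^\bullet$ is a quasi-isomorphism while $\mathrm{Tor}^{A'}_q(Y^\bullet,A)$ vanishes for $q\geq 1$, exhibiting $Y^\bullet$ as a quasi-lift of $(Z^\bullet,\zeta)$ over $A'$. The direct direction is then immediate: given a quasi-lift, Lemma \ref{lem:extra} provides a representative whose middle complex fits into some $\xi$; the Tor sequence (\ref{eq:torseq}) forces $h_\xi\circ f_\xi=\iota$, and Theorem \ref{thm:bigobstructionthm}(iii) together with (\ref{eq:lowdegree}) gives $\omega=0$.

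Part (ii) reduces to bookkeeping once (i) is in place. Assuming $\omega=0$, the preimage of $[\iota]$ under $\tilde{\varphi}_{II}^0$ is, by exactness of (\ref{eq:lowdegree}), a principal homogeneous space for $E_\infty^{1,0}$. Using Theorem \ref{thm:bigobstructionthm} and Lemma \ref{lem:extra}, I would verify that assigning to a local isomorphism class of quasi-lifts the class of $(\xi,h_\xi)$ produced by Lemma \ref{lem:extra} defines a bijection onto this preimage. Surjectivity follows from the argument of part (i) applied to each class in the preimage, while well-definedness amounts to checking that two short exact sequence presentations of locally isomorphic quasi-lifts differ by a trivial extension; this last point is again modeled directly on the module case.

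For part (iii), I would construct an explicit retraction of the inflation map
$$\mathrm{Inf}_B^{B'}:E_2^{p,0}=\mathrm{Ext}^p_{D^-(B)}(Z^\bullet,J\hat{\otimes}_AZ^\bullet)\to\mathrm{Ext}^p_{D^-(B')}(Z^\bullet,J\hat{\otimes}_AZ^\bullet).$$
By part (i), a quasi-lift $(Y^\bullet,\upsilon)$ exists, and the canonical composite $\alpha:Y^\bullet\to A\hat{\otimes}_{A'}Y^\bullet\xrightarrow{\upsilon}Z^\bullet$ induces pullback $\alpha^*$. Since $J\hat{\otimes}_AZ^\bullet$ is annihilated by $J$, the adjunction between $A\hat{\otimes}_{A'}^{\LL}-$ and restriction of scalars, together with $A\hat{\otimes}_{A'}^{\LL}Y^\bullet\simeq Z^\bullet$, gives a natural identification of $\mathrm{Ext}^p_{D^-(B')}(Y^\bullet,J\hat{\otimes}_AZ^\bullet)$ with $\mathrm{Ext}^p_{D^-(B)}(Z^\bullet,J\hat{\otimes}_AZ^\bullet)=E_2^{p,0}$. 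A direct chase shows that this chain composed with $\mathrm{Inf}_B^{B'}$ is the identity on $E_2^{p,0}$. Injectivity of $\mathrm{Inf}_B^{B'}$ then forces $W_2^{p,0}=0$, whence $E_2^{p,0}=E_\infty^{p,0}$. The main obstacle I anticipate lies in the converse of (i): one must carefully trace how $[\iota]\in E_\infty^{0,1}$ is realized at the level of $L^{\bullet,\bullet}$ and $M^{\bullet,\bullet,\bullet}$, arrange the representative $(\xi,h_\xi)$ so that $h_\xi\circ f_\xi$ agrees with $\iota$ exactly rather than only modulo $W_2^{0,1}$, and then deduce the quasi-lift property for $Y^\bullet$ from the exactness of (\ref{eq:torseq}) and the vanishing implicit in (\ref{eq:needthislater}).
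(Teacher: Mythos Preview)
Your overall architecture matches the paper's: deduce (i) and (ii) from the low-degree exact sequence (\ref{eq:lowdegree}) together with the identification of $F_{II}^0$ in Theorem~\ref{thm:bigobstructionthm}, and prove (iii) by showing inflation is split injective via a quasi-lift. Part (iii) in particular is essentially the paper's argument, phrased more cleanly via adjunction; the paper carries out the same idea by an explicit homotopy argument using the surjection $a_Y:Y^\bullet\to A\hat\otimes_{A'}Y^\bullet$.

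There is, however, a genuine gap in the converse of (i), and it is not the one you flag. The obstacle you anticipate---lifting the equality $[h_\xi\circ f_\xi]=[\iota]$ in $E_\infty^{0,1}$ to an equality in $E_2^{0,1}$---is handled in the paper by working at the $E_2$ level from the start: Lemma~\ref{lem:gabberconstruct}(ii) produces, for any $f$ with $d_2^{0,1}(f)=0$, a $\kappa\in\mathrm{Hom}_{D^-(B)}(A\hat\otimes_{A'}T^\bullet,J\hat\otimes_AZ^\bullet)$ with $\kappa\circ\sigma=f$, and the pushout class $\eta_\kappa$ then satisfies $h_\xi\circ f_\xi=f$ exactly. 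So one simply takes $f=\iota$.

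The gap you do not flag is the step ``substitute $h_\xi\circ f_\xi=\iota$ into (\ref{eq:torseq}) and conclude $A\hat\otimes_{A'}Y^\bullet\to Z^\bullet$ is a quasi-isomorphism while $\mathrm{Tor}_q^{A'}(Y^\bullet,A)=0$ for $q\ge 1$.'' This does not follow from the Tor sequence. The sequence (\ref{eq:torseq}) is exact in $C^-(B)$, but $f_\xi$ is only known to be an isomorphism in $D^-(B)$ (a quasi-isomorphism), not in $C^-(B)$; and the terms of the pushout $Y^\bullet$ are not flat over $A'$, so $\mathrm{Tor}_1^{A'}(Y^\bullet,A)$ need not vanish. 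What one needs is that the \emph{derived} map $\upsilon:A\hat\otimes_{A'}^{\LL}Y^\bullet\to Z^\bullet$ is an isomorphism in $D^-(B)$. The paper isolates this as Lemma~\ref{lem:gabberlemma1} (Gabber's lemma): $f_\xi$ is an isomorphism in $D^-(B)$ if and only if $\upsilon$ is. The proof is not a formal consequence of exactness; it passes to a projective model $Q^\bullet\to Y^\bullet$, compares the two rows via (\ref{eq:olala}), and then argues inductively (using boundedness above and topological freeness of the terms of $Z^\bullet$ over $A$) that the cone $C(\upsilon_Q)^\bullet$ is acyclic. The same lemma is also what makes your direct direction work: given a quasi-lift produced by Lemma~\ref{lem:extra}, you need $f_\xi$ to be an isomorphism in $D^-(B)$ in order to \emph{define} $h_\xi=\iota\circ f_\xi^{-1}$, and this is exactly the content of Lemma~\ref{lem:gabberlemma1} in the reverse direction. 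Finally, once $\upsilon$ is an isomorphism, the paper checks finite pseudocompact $A'$-tor dimension separately via a d\'evissage on $0\to JS'\to S'\to S'/JS'\to 0$; this also needs to be added to your argument.
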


We will see in Remark $\ref{rem:notalwaysdegenerate}$ that if the lifting obstruction
$\omega(Z^\bullet,A')\neq 0$, then $E_\infty^{1,0}$ is a proper quotient of $E_2^{1,0}$ in general.

With respect to automorphisms of quasi-lifts, we get the following result.

\begin{lemma}
\label{lem:aut}
Assume the notation of Theorem $\ref{thm:obstructions}$, and suppose that
$\omega(Z^\bullet,A')=0$. Let $(Y^\bullet,\upsilon)$ be a quasi-lift of 
$(Z^\bullet,\zeta)$ over $A'$. 
Define $\mathrm{Aut}^0_{D^-(B')}(Y^\bullet)$ to be the group of automorphisms 
$\theta$ of $Y^\bullet$ in $D^-(B')$ for which
$\upsilon\circ (A\hat{\otimes}^{\LL}_{A'}\theta) = \upsilon$ in $D^-(B)$, i.e.
$A\hat{\otimes}^{\LL}_{A'}\theta$ is equal to the identity on $A\hat{\otimes}^{\LL}_{A'}Y^\bullet$
in $D^-(B)$. Then 
$$\mathrm{Aut}^0_{D^-(B')}(Y^\bullet)\cong \mathrm{Hom}_{D^-(B)}(
Z^\bullet,J\hat{\otimes}_A Z^\bullet)/\mathrm{Image}(
\mathrm{Ext}^{-1}_{D^-(B)}(Z^\bullet,Z^\bullet)).$$
Here $\mathrm{Image}(\mathrm{Ext}^{-1}_{D^-(B)}(Z^\bullet,Z^\bullet))$ is the image
of $\mathrm{Ext}^{-1}_{D^-(B)}(Z^\bullet,Z^\bullet)$ in $\mathrm{Hom}_{D^-(B)}(
Z^\bullet,J\hat{\otimes}_A Z^\bullet)$ under the map which is induced by the homomorphism
$A\hat{\otimes}^{\LL}_{A'}Y^\bullet[-1]\to J\hat{\otimes}^{\LL}_{A'}Y^\bullet$ in the triangle 
$A\hat{\otimes}^{\LL}_{A'}Y^\bullet[-1]\to  J\hat{\otimes}^{\LL}_{A'}Y^\bullet \to
 A'\hat{\otimes}^{\LL}_{A'}Y^\bullet\to A\hat{\otimes}^{\LL}_{A'}Y^\bullet$ in $D^-(B')$.
\end{lemma}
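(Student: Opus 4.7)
The plan is to produce a natural isomorphism from the additive group
$\mathrm{Hom}_{D^-(B)}(Z^\bullet,J\hat{\otimes}_A Z^\bullet)/\mathrm{Image}(\mathrm{Ext}^{-1}_{D^-(B)}(Z^\bullet,Z^\bullet))$
to the multiplicative group $\mathrm{Aut}^0_{D^-(B')}(Y^\bullet)$ by sending $\eta$ to $\mathrm{id}_{Y^\bullet}+\alpha\circ\eta\circ\rho$, where $\alpha$ and $\rho$ arise from the distinguished triangle produced by derived-tensoring $0\to J\to A'\to A\to 0$ over $A'$ with $Y^\bullet$. First I would write this triangle as
$$A\hat{\otimes}^{\LL}_{A'}Y^\bullet[-1]\xrightarrow{\gamma} J\hat{\otimes}^{\LL}_{A'}Y^\bullet\xrightarrow{\alpha} Y^\bullet \xrightarrow{\rho} A\hat{\otimes}^{\LL}_{A'}Y^\bullet$$
in $D^-(B')$, use $\upsilon$ to identify $A\hat{\otimes}^{\LL}_{A'}Y^\bullet$ with $Z^\bullet$, and apply change of rings along $A'\to A$ (valid because $J$ is annihilated by itself and is therefore an $A$-module) together with the termwise topological $A$-flatness of $Z^\bullet$ from Hypothesis \ref{hypo:obstruct} to identify $J\hat{\otimes}^{\LL}_{A'}Y^\bullet$ with $J\hat{\otimes}_A Z^\bullet$. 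The resulting $\gamma$ is precisely the map appearing in the statement of the lemma.

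Next I would apply $\mathrm{Hom}_{D^-(B')}(Y^\bullet,-)$ to the triangle. The adjunction along $B'\to B$ (see Remark \ref{rem:AG}) identifies $\mathrm{Hom}_{D^-(B')}(Y^\bullet,N)$ with $\mathrm{Hom}_{D^-(B)}(Z^\bullet,N)$ via precomposition with $\rho$ for every complex $N$ of $B$-modules, so the long exact sequence contains
$$\mathrm{Ext}^{-1}_{D^-(B)}(Z^\bullet,Z^\bullet)\xrightarrow{\gamma_*}\mathrm{Hom}_{D^-(B)}(Z^\bullet,J\hat{\otimes}_A Z^\bullet)\xrightarrow{\alpha_*}\mathrm{End}_{D^-(B')}(Y^\bullet)\xrightarrow{\rho_*}\mathrm{End}_{D^-(B)}(Z^\bullet).$$
Under the adjunction, $\rho_*(\theta)=\rho\circ\theta$ corresponds to $A\hat{\otimes}^{\LL}_{A'}\theta$, so the condition $A\hat{\otimes}^{\LL}_{A'}\theta=\mathrm{id}_{Z^\bullet}$ is equivalent to $\rho_*(\theta)=\mathrm{id}_{Z^\bullet}$, and $\mathrm{Aut}^0_{D^-(B')}(Y^\bullet)$ is the fibre $\rho_*^{-1}(\mathrm{id}_{Z^\bullet})$.

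Finally I define $\Psi(\eta)=\mathrm{id}_{Y^\bullet}+\alpha\circ\eta\circ\rho$. The triangle identity $\rho\circ\alpha=0$ (consecutive morphisms in a distinguished triangle compose to zero) gives $(\alpha\eta\rho)^2=\alpha\eta(\rho\alpha)\eta\rho=0$, so $\Psi(\eta)$ has two-sided inverse $\mathrm{id}-\alpha\eta\rho$; the same identity forces the cross term to drop out of $\Psi(\eta_1)\circ\Psi(\eta_2)$, yielding $\Psi(\eta_1)\circ\Psi(\eta_2)=\Psi(\eta_1+\eta_2)$. Moreover $\rho\circ\Psi(\eta)=\rho$ since $\rho\alpha=0$, so $\Psi(\eta)\in\mathrm{Aut}^0_{D^-(B')}(Y^\bullet)$. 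Surjectivity is forced by the long exact sequence, since any $\theta\in\mathrm{Aut}^0$ satisfies $\theta-\mathrm{id}\in\ker(\rho_*)=\mathrm{Image}(\alpha_*)$, and under the adjoint identification every element of $\mathrm{Image}(\alpha_*)$ has the form $\alpha\circ\eta\circ\rho$ for some $\eta\in\mathrm{Hom}_{D^-(B)}(Z^\bullet,J\hat{\otimes}_A Z^\bullet)$; for the kernel, $\Psi(\eta)=\mathrm{id}$ iff $\alpha_*(\eta)=0$, which by exactness is iff $\eta\in\mathrm{Image}(\gamma_*)$, and $\gamma_*$ is the map described in the statement of the lemma. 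The main obstacle will be carefully verifying the compatibility of the pseudocompact adjunction of Remark \ref{rem:AG} with the derived change-of-rings isomorphism $J\hat{\otimes}^{\LL}_{A'}Y^\bullet\cong J\hat{\otimes}_A Z^\bullet$, and checking that $\rho_*$ in the long exact sequence really coincides with $\theta\mapsto A\hat{\otimes}^{\LL}_{A'}\theta$; once these formal identifications are in place, the rest is group-theoretic bookkeeping driven by the triangle identity $\rho\circ\alpha=0$.
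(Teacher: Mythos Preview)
Your proposal is correct and follows essentially the same approach as the paper: both arguments apply $\mathrm{Hom}_{D^-(B')}(Y^\bullet,-)$ to the triangle coming from $0\to J\to A'\to A\to 0$, use the derived change-of-rings adjunction $(c)^*$ (your precomposition with $\rho$) to pass from $B'$ to $B$, and identify $\mathrm{Aut}^0$ with the image of $(b)_*$ (your $\alpha_*$) via the map $\theta\mapsto\theta-\mathrm{id}$; your explicit formula $\Psi(\eta)=\mathrm{id}+\alpha\eta\rho$ and the verification via $\rho\alpha=0$ simply unpack what the paper compresses into the phrase ``$\mathrm{Image}((b)_*)$ is a two-sided ideal with square $0$.'' The only point the paper makes explicit that you leave implicit is the preliminary replacement of $Y^\bullet$ by a complex with projective pseudocompact $B'$-terms (via Theorem~\ref{thm:derivedresult}), which is what makes the adjunction and the identification $J\hat{\otimes}^{\LL}_{A'}Y^\bullet\cong J\hat{\otimes}_A Z^\bullet$ go through cleanly.
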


We obtain the following connection between the lifting obstruction $\omega(Z^\bullet,A')$ of
Theorem \ref{thm:obstructions} and the lifting obstruction $\omega_0(Z^\bullet,A')$
resulting from the naive approach described in \S \ref{ss:naive}.

\begin{prop}
\label{prop:compare}
Assume the notation of \S $\ref{ss:naive}$ and 
Theorem $\ref{thm:obstructions}$.
There exists an automorphism $u$ $($resp. $v$$)$ of $Z^\bullet$ 
$($resp. $J\hat{\otimes}_A Z^\bullet$$)$ in $D^-(B)$ such that
$\omega_0(Z^\bullet,A')=v[2]\circ \omega(Z^\bullet,A')\circ u$ in $D^-(B)$. 

Suppose $\omega(Z^\bullet,A')=0$. There exists an automorphism $u'$ $($resp. $v'$$)$ of $Z^\bullet$ 
$($resp. $J\hat{\otimes}_A Z^\bullet$$)$ in $D^-(B')$ with the following property:
Let $(Y_0^\bullet,\upsilon_0)$ and $({Y'}^\bullet,\upsilon')$ be two quasi-lifts of $(Z^\bullet,\zeta)$ 
over $A'$ whose local isomorphism classes correspond to $\eta_{\xi_0}$ and
$\eta_{\xi'}$, respectively, in $F_{II}^0\,\HH^1(\mathrm{Tot}(L^{\bullet,\bullet}))=\tilde{F}_{II}^0$
according to Lemma $\ref{lem:extra}$ and Theorem $\ref{thm:obstructions}(ii)$.
Then $\eta_{\xi'}-\eta_{\xi_0}=v'[1]\circ \varphi_{II}^0(\beta_{Y'})\circ u'$ in $D^-(B')$.
\end{prop}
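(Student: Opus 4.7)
The plan is to compute $\omega(Z^\bullet,A')$ using a specific choice of the projective resolution $P^{\bullet,\bullet}$ of Definition \ref{def:spectralseqA}, tailored to the naive data $(\tilde Z^\bullet, Y^\bullet, c_Y)$ used in $\S\ref{ss:naive}$, and to show that the resulting cocycle representing $d_2^{0,1}(\iota)$ in $\mathrm{Hom}_{K^-(B)}(\tilde Z^\bullet, JY^\bullet[2])$ is literally the naive cocycle $\tilde\omega$ from (\ref{eq:yuckyuck}). This gives the first claim directly with $u=\mathrm{id}_{Z^\bullet}$ and $v=\mathrm{id}_{J\hat\otimes_A Z^\bullet}$ for the tailored choice; for data not adapted in this way, $u$ and $v$ are the change-of-choices isomorphisms relating the two identifications of $\mathrm{Ext}^2_{D^-(B)}(Z^\bullet,J\hat\otimes_AZ^\bullet)$ with $\mathrm{Hom}_{K^-(B)}(\tilde Z^\bullet,JY^\bullet[2])$.

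For the first claim, I would build $P^{\bullet,\bullet}$ so that $P^{0,\bullet}$ is a projective resolution of $\tilde Z^\bullet$ whose low degrees recover the topologically free lifts $Y^j$ (with their pseudo-differentials $c_Y^j$), and the higher homological columns $P^{-x,\bullet}$ ($x\ge 1$) absorb the failure of $(Y^\bullet,c_Y)$ to be a complex. With this choice, $\HH_I^{-1}(A\hat\otimes_{A'}P^{\bullet,\bullet})$ is canonically identified with $JY^\bullet$, and $\iota$ from Definition \ref{def:iota} is the identity on $JY^\bullet$. Tracing through the Cartan--Eilenberg triple complex $M^{\bullet,\bullet,\bullet}$ and the differentials (\ref{eq:Ldifferential1})--(\ref{eq:Ldifferential2}) of $L^{\bullet,\bullet}$, the cocycle representing $d_2^{0,1}(\iota)\in E_2^{2,0}$ in $\mathrm{Hom}_{K^-(B)}(\tilde Z^\bullet,JY^\bullet[2])$ is precisely the map $a_Y^j(y)\mapsto c_Y^{j+1}(c_Y^j(y))$ of (\ref{eq:yuckyuck}), which is $\tilde\omega$. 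Hence $\omega(Z^\bullet,A')=\omega_0(Z^\bullet,A')$ for the tailored computation, and the general case follows by inserting the automorphisms $u,v$.

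For the second claim, assume $\omega(Z^\bullet,A')=0$ and use Lemma \ref{lem:extra} together with the observation at the end of $\S\ref{ss:naive}$ to replace $(Y_0^\bullet,\upsilon_0)$ and $({Y'}^\bullet,\upsilon')$ by locally isomorphic quasi-lifts having the common underlying terms $Y^j$ and differentials $d_{Y_0}^j, d_{Y'}^j$ both lifting $c_Y^j$ modulo $J$. Each produces a short exact sequence of the form (\ref{eq:gabberclass}) with middle term $Y_0^\bullet$ (resp.\ ${Y'}^\bullet$) and class $\eta_{\xi_0}$ (resp.\ $\eta_{\xi'}$) in $F_{II}^0=\tilde F_{II}^0$. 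The Baer difference $\eta_{\xi'}-\eta_{\xi_0}$ is represented by the extension glued along the maps $d_{Y'}^j-d_{Y_0}^j=\tilde\beta_{Y'}^j\circ a_Y^j$ from (\ref{eq:principali}). Using the description of $\varphi_{II}^0$ via $f_\xi$ in the Tor sequence (\ref{eq:torseq}) from Theorem \ref{thm:bigobstructionthm}(iii) together with the identifications of the first part now promoted to $D^-(B')$, one verifies that $\eta_{\xi'}-\eta_{\xi_0}=v'[1]\circ\varphi_{II}^0(\beta_{Y'})\circ u'$ for suitable automorphisms $u',v'$ in $D^-(B')$.

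The hard part will be the explicit unpacking of $d_2^{0,1}(\iota)$ through the Cartan--Eilenberg triple complex $M^{\bullet,\bullet,\bullet}$: one must carefully track how the splittings (\ref{eq:carteil1})--(\ref{eq:carteil2}) combined with the sign conventions in the differentials of $L^{\bullet,\bullet}$ accumulate to reproduce exactly the composition $c_Y^{j+1}\circ c_Y^j$, rather than some homotopic variant. Once this bookkeeping is settled, the second claim follows by a linearized ($\mathrm{Ext}^1$ in place of $\mathrm{Ext}^2$) version of the same calculation applied to the Baer difference of $\xi_0$ and $\xi'$.
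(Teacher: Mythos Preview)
Your high-level strategy matches the paper's: construct a specific $P^{0,\bullet}$ and $\epsilon:P^{0,\bullet}\to Z^\bullet$ tailored to the naive data $(Y^j,c_Y^j)$, then compute $d_2^{0,1}(\iota)$ explicitly. However, your execution plan diverges from the paper in a way that makes life harder, and a few of your claims are not quite right.

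First, the paper never traces $d_2^{0,1}(\iota)$ through the Cartan--Eilenberg triple complex $M^{\bullet,\bullet,\bullet}$ and the differentials of $L^{\bullet,\bullet}$. Instead it invokes Lemma~\ref{lem:oyoyoy!}, which already packages $d_2^{0,1}(\iota)=\iota[2]\circ\alpha_2[1]\circ\alpha_1$ as a composition of the connecting morphisms from the triangles (\ref{eq:alpha1})--(\ref{eq:alpha2}). With the tailored $P^{0,\bullet}$ (built explicitly as $P^{0,-j}=Y^{-j-1}\oplus Y^{-j}$ with differentials involving $c_Y$), one computes $T^\bullet$, $D^\bullet$ and $\tau$ directly, and then represents $\alpha_2[1]\circ\alpha_1$ via the mapping cones of $\delta_D$ and $\sigma$. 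This yields $\alpha_2[1]\circ\alpha_1=s_J[2]\circ\tilde\omega\circ(s_Z)^{-1}$ in $D^-(B)$ without ever touching $M^{\bullet,\bullet,\bullet}$. Your proposed bookkeeping through the triple complex and the splittings (\ref{eq:carteil1})--(\ref{eq:carteil2}) would presumably also work, but it is substantially more laborious and the signs are genuinely delicate; the mapping-cone route is both shorter and safer.

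Second, your claim that $u=v=\mathrm{id}$ for the tailored choice is not correct. Because $Z^\bullet$ is the truncation $\mathrm{Trunc}_{-p_0}(\tilde Z^\bullet)$ and $\HH_I^{-1}(A\hat\otimes_{A'}P^{\bullet,\bullet})$ is identified with $\mathrm{Trunc}_{-p_0}(JY^\bullet)$ rather than $JY^\bullet$ itself, the truncation quasi-isomorphisms $s_Z:\tilde Z^\bullet\to Z^\bullet$ and $s_J:JY^\bullet\to\mathrm{Trunc}_{-p_0}(JY^\bullet)$ appear, and one obtains $\omega(Z^\bullet,A')=(\iota\circ s_J)[2]\circ\tilde\omega\circ(s_Z)^{-1}$. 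Also, $P^{0,\bullet}$ is not a ``projective resolution of $\tilde Z^\bullet$'' but rather an acyclic projective object in $C_0^-(B')$ surjecting onto $Z^\bullet$; this distinction matters for the construction.

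For the second claim, the paper again avoids spectral-sequence machinery: it writes down explicit lifts $\lambda_0,\lambda':P^{0,\bullet}\to\mathrm{Trunc}_{-p_0}(Y_0^\bullet),\mathrm{Trunc}_{-p_0}({Y'}^\bullet)$ over $\epsilon$, expresses $\eta_{\xi_0},\eta_{\xi'}$ via the connecting map $\eta_T$ as $\eta_{\xi}=\iota[1]\circ\tilde\lambda[1]\circ\eta_T$, and checks directly that $\tilde\lambda'-\tilde\lambda_0=s_J\circ\tilde\beta_{Y'}[-1]\circ(s_D)^{-1}\circ(\tau\circ a_T)$ for a suitable quasi-isomorphism $s_D:\tilde Z^\bullet[-1]\to D^\bullet$. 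Your plan to use Baer differences and the description of $\varphi_{II}^0$ is plausible but less direct; the paper's route stays entirely at the level of explicit maps in $C^-(B')$ and the triangle (\ref{eq:triangle}).
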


The proofs of Theorems \ref{thm:bigobstructionthm}, \ref{thm:obstructions},
Lemma \ref{lem:aut} and Proposition \ref{prop:compare} are carried out in several sections.


\subsection{Gabber's construction}
\label{ss:gabber}

In this subsection we prove a result due to Gabber which is the key to
relating the term $F_{II}^0\,\HH^1(\mathrm{Tot}(L^{\bullet,\bullet}))$
from the sequence $(\ref{eq:lowdegree})$ to the set $\tilde{F}_{II}^0$ from Definition 
\ref{def:gabberclasses}.

\begin{dfn}
\label{def:gabber}
Assume Hypothesis $\ref{hypo:obstruct}$ and the notation introduced in Definitions
$\ref{def:spectralseqA}$ -  $\ref{def:gabberclasses}$.
We have a short exact sequence in $C^-(B')$
\begin{equation}
\label{eq:sequ1}
0\to T^\bullet \xrightarrow{\delta} P^{0,\bullet} \xrightarrow{\epsilon} Z^\bullet\to 0
\end{equation}
where $T^\bullet=\mathrm{Ker}(\epsilon)$ and $\delta$ is inclusion.
Recall that $P^{0,\bullet}$ is a projective object in $C_0^-(B')$. 
Since $Z^0=0$, we can, and will, assume 
that $P^{0,\bullet}$ is
an acyclic complex of projective pseudocompact $B'$-modules.
Tensoring $(\ref{eq:sequ1})$ with $A$ over $A'$ 
gives an exact sequence of complexes in $C^-(B)$
\begin{equation}
\label{eq:sequ2}
0\to \mathrm{Tor}^{A'}_1(Z^\bullet,A) \xrightarrow{\sigma} A\hat{\otimes}_{A'}T^\bullet
\xrightarrow{A\hat{\otimes}_{A'}\delta} A\hat{\otimes}_{A'}P^{0,\bullet} \xrightarrow{
A\hat{\otimes}_{A'}\epsilon} Z^\bullet \to 0
\end{equation}
where $\mathrm{Tor}^{A'}_1(Z^\bullet,A)=\HH_I^{-1}(A\hat{\otimes}_{A'}P^{\bullet,\bullet})$.
Write $(\ref{eq:sequ2})$ as the Yoneda composition of two short exact sequences in $C^-(B)$
\begin{eqnarray}
\label{eq:seqnow1}
&0\to D^\bullet\xrightarrow{\delta_D} A\hat{\otimes}_{A'}P^{0,\bullet} \xrightarrow{
A\hat{\otimes}_{A'}\epsilon} Z^\bullet \to 0,&
\\
\label{eq:seqnow2}
&0\to \HH_I^{-1}(A\hat{\otimes}_{A'}P^{\bullet,\bullet}) \xrightarrow{\sigma} 
A\hat{\otimes}_{A'}T^\bullet\xrightarrow{\tau} D^\bullet \to 0.&
\end{eqnarray}
Then the triangles in $D^-(B)$ associated to $(\ref{eq:seqnow1})$ and to 
$(\ref{eq:seqnow2})$ have the form
\begin{eqnarray}
\label{eq:alpha1}
& D^\bullet\xrightarrow{\delta_D} A\hat{\otimes}_{A'}P^{0,\bullet} \xrightarrow{
A\hat{\otimes}_{A'}\epsilon} Z^\bullet \xrightarrow{\alpha_1}D^\bullet[1],&
\\
\label{eq:alpha2}
& \HH_I^{-1}(A\hat{\otimes}_{A'}P^{\bullet,\bullet}) \xrightarrow{\sigma} 
A\hat{\otimes}_{A'}T^\bullet\xrightarrow{\tau} D^\bullet \xrightarrow{\alpha_2}
\HH_I^{-1}(A\hat{\otimes}_{A'}P^{\bullet,\bullet})[1].&
\end{eqnarray}
\end{dfn}

We first express the differential $d_2^{0,1}:E_2^{0,1}\to E_2^{2,0}$ in terms of 
the morphisms $\alpha_1$ and $\alpha_2$ 
in the triangles $(\ref{eq:alpha1})$ and $(\ref{eq:alpha2})$ in $D^-(B)$.

\begin{rem}
\label{rem:extraextra}
By $(\ref{eq:spectral})$ and Definition $\ref{def:spectralseqB1}$,
\begin{equation}
\label{eq:whenwillitend}
E_2^{p,q}=\mathrm{Ext}^p_{D^-(B)}(\HH_I^{-q}(A\hat{\otimes}_{A'}P^{\bullet,\bullet}),
J\hat{\otimes}_AZ^\bullet)=\HH_{II}^p(H_I^q(L^{\bullet,\bullet})).
\end{equation}
Thus the elements in 
$E_2^{p,q}$
are represented by 
elements $\beta\in L^{q,p}$ satisfying $d_I^{q,p}(\beta)=0$ and $d_{II}^{q,p}(\beta)\in
\mathrm{Image}(d_I^{q-1,p+1})$. 
It follows from $(\ref{eq:double})$ that
\begin{equation}
\label{eq:nice}
L^{q,p}=\bigoplus_j \mathrm{Hom}_B(\mathrm{Tot}(M^{-q,\bullet,\bullet})^{-j},
J\hat{\otimes}_AZ^{-j+p}),
\end{equation}
which
is equal to the $0^{\mathrm{th}}$ term in the total Hom complex
$\mathrm{Hom}_B^\bullet(\mathrm{Tot}(M^{-q,\bullet,\bullet}),
J\hat{\otimes}_AZ^\bullet[p])$. 
\end{rem}

\begin{lemma}
\label{lem:oyoyoy!}
Assume the notation of Definition $\ref{def:gabber}$ and Remark $\ref{rem:extraextra}$, 
and in particular the notation of
$(\ref{eq:alpha1})$, $(\ref{eq:alpha2})$ and $(\ref{eq:whenwillitend})$.
If
$$f\in E_2^{0,1}=
\mathrm{Hom}_{D^-(B)}(\HH_I^{-1}(A\hat{\otimes}_{A'}P^{\bullet,\bullet}),J\hat{\otimes}_AZ^\bullet),$$
then $d_2^{0,1}(f)=f[2]\circ\alpha_2[1]\circ\alpha_1\in
\mathrm{Hom}_{D^-(B)}(Z^\bullet,J\hat{\otimes}_AZ^\bullet[2])
=\mathrm{Ext}^2_{D^-(B)}(Z^\bullet,J\hat{\otimes}_AZ^\bullet)
=E_2^{2,0}$.
\end{lemma}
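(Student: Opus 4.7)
The plan is to unfold the $d_2^{0,1}$ differential explicitly in the double complex $L^{\bullet,\bullet}$ and match the resulting class in $E_2^{2,0}$ with the Yoneda composition $f[2]\circ\alpha_2[1]\circ\alpha_1$. The conceptual point is that $\alpha_2[1]\circ\alpha_1\in\mathrm{Ext}^2_{D^-(B)}(Z^\bullet,\HH_I^{-1}(A\hat{\otimes}_{A'}P^{\bullet,\bullet}))$ is the Yoneda $2$-extension class of the four-term exact sequence $(\ref{eq:sequ2})$, realized as the Yoneda splice of the two short exact sequences $(\ref{eq:seqnow1})$ and $(\ref{eq:seqnow2})$; once this is identified, the lemma asserts that the $d_2$-differential is given by post-composing this class with $f$, which is the classical behavior of $d_2$ on edge-like elements of a Grothendieck spectral sequence.

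First, I would choose a representative $\tilde f\in L^{1,0}$ of $f$ satisfying $d_I^{1,0}(\tilde f)=0$ and $d_{II}^{1,0}(\tilde f)\in \mathrm{Image}(d_I^{0,1})$. Via $(\ref{eq:nice})$ and the Cartan-Eilenberg structure, $\tilde f$ corresponds to a map of complexes $\mathrm{Tot}(M^{-1,\bullet,\bullet})\to J\hat{\otimes}_A Z^\bullet$ in $C^-(B)$ which, after passing through the projective resolution $M^{-1,\bullet,\bullet}\to A\hat{\otimes}_{A'}P^{-1,\bullet}$ and taking $\HH_I^{-1}$, induces $f$ on $\HH_I^{-1}(A\hat{\otimes}_{A'}P^{\bullet,\bullet})=\mathrm{Tor}_1^{A'}(Z^\bullet,A)$.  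Applying $d_{II}^{1,0}$ and then lifting through $d_I^{0,1}$, using the split exact sequences $(\ref{eq:carteil1})$ and $(\ref{eq:carteil2})$ to perform the diagonal chase, produces $g\in L^{0,1}$ whose existence is the translation of the fact that $f$ extends across the short exact sequence $(\ref{eq:seqnow2})$ from $\HH_I^{-1}(A\hat{\otimes}_{A'}P^{\bullet,\bullet})$ to $A\hat{\otimes}_{A'}T^\bullet$ after composition with $\alpha_2[1]$.

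Finally, I would compute $d_{II}^{0,1}(g)\in L^{0,2}$; by construction this measures the failure of the extension obtained in the previous step to lift further across $(\ref{eq:seqnow1})$ to $A\hat{\otimes}_{A'}P^{0,\bullet}$, which is exactly the pullback along $\alpha_1$. Assembling the two halves of the chase, the resulting cohomology class in $E_2^{2,0}=\mathrm{Ext}^2_{D^-(B)}(Z^\bullet,J\hat{\otimes}_AZ^\bullet)$ equals $f[2]\circ\alpha_2[1]\circ\alpha_1$, as desired.

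The main obstacle will be a careful bookkeeping of signs and identifications between $L^{q,p}$ and the $\mathrm{Hom}$-groups produced by the triple complex $M^{\bullet,\bullet,\bullet}$, particularly the shift producing $\alpha_2[1]$: in the zigzag, the middle segment is extracted from the resolution $\BB^0_I(M^{\bullet,\bullet,\bullet})\to \ZZ^0_I(M^{\bullet,\bullet,\bullet})\to \HH^0_I(M^{\bullet,\bullet,\bullet})$, accounting for one application of the translation functor, while the outer segment comes from the horizontal differential $M^{-1,\bullet,\bullet}\to M^{0,\bullet,\bullet}$ and accounts for $\alpha_1$. Once these compatibilities and the signs $(-1)^y$, $(-1)^{p+1}$ in $(\ref{eq:Ldifferential2})$ are reconciled with the Yoneda splice, the equality $d_2^{0,1}(f)=f[2]\circ\alpha_2[1]\circ\alpha_1$ follows.
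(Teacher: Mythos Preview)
Your proposal is correct and follows essentially the same approach as the paper's proof: represent $f$ by an element $\beta_f\in L^{1,0}$, perform the standard $d_2$ zigzag to obtain $\gamma_f\in L^{0,1}$ with $d_I^{0,1}(\gamma_f)=d_{II}^{1,0}(\beta_f)$, and then identify the class of $d_{II}^{0,1}(\gamma_f)$ in $E_2^{2,0}$ with $f[2]\circ\alpha_2[1]\circ\alpha_1$. The paper's proof is equally a sketch; it suggests carrying out the last identification by representing $\alpha_1$ and $\alpha_2$ concretely via the mapping cones of $\delta_D$ and $\sigma$, which dovetails with your use of the Cartan--Eilenberg splittings $(\ref{eq:carteil1})$--$(\ref{eq:carteil2})$ to navigate the zigzag.
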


\begin{proof}
It follows from Remark $\ref{rem:extraextra}$ that
if $\beta_f\in L^{1,0}$ represents $f\in E_2^{0,1}$,
then there exists $\gamma_f\in L^{0,1}$ with $d_{II}^{1,0}(\beta_f)=d_I^{0,1}(\gamma_f)$.
Hence $d_2^{0,1}(f) \in E_2^{2,0}$ is represented by $d_{II}^{0,1}(\gamma_f)\in L^{0,2}$.
A calculation using $(\ref{eq:Ldifferential1})$ and $(\ref{eq:Ldifferential2})$ shows that
$d_{II}^{0,1}(\gamma_f)$ also represents $f[2]\circ\alpha_2[1]\circ\alpha_1\in E_2^{2,0}$.
In carrying out this calculation, it is useful to represent $\alpha_1$ explicitly in 
$(\ref{eq:alpha1})$ using a quasi-isomorphism between the mapping cone
of $\delta_D$ and $Z^\bullet$, and similarly for $\alpha_2$ in $(\ref{eq:alpha2})$.
\end{proof}

The next definition gives a connection between morphisms $\kappa$ in
$\mathrm{Hom}_{D^-(B)}(A\hat{\otimes}_{A'}T^\bullet,J\hat{\otimes}_A Z^\bullet)$ and
elements in $\tilde{F}_{II}^0$.
This is the key to relating $\tilde{F}_{II}^0$ to $F_{II}^0\,\HH^1(\mathrm{Tot}(L^{\bullet,\bullet}))$.

\begin{dfn}
\label{def:pushout}
Assume the notation of Definition $\ref{def:gabber}$, so that in particular,
$P^{0,\bullet}$  is an acyclic complex of projective pseudocompact 
$B'$-modules.
Suppose $\kappa:A\hat{\otimes}_{A'}T^\bullet\to J\hat{\otimes}_A Z^\bullet$
is a homomorphism in $D^-(B)$. Then $\kappa$ can be represented as
\begin{equation}
\label{eq:kappa}
\kappa = s^{-1}\circ\tilde{\kappa}
\end{equation} 
for suitable homomorphisms
$s:J\hat{\otimes}_AZ^\bullet\to X^\bullet$ and $\tilde{\kappa}:A\hat{\otimes}_{A'}T^\bullet\to X^\bullet$ 
in $C^-(B)$ such that $s$ is a quasi-isomorphism.
We obtain a pushout diagram in $C^-(B')$
\begin{equation}
\label{eq:pushout}
\xymatrix{
0\ar[r] & T^\bullet\ar[d]_{a_T}\ar[r]^{\delta} & P^{0,\bullet}\ar[r]^{\epsilon}\ar[dd]_\lambda 
& Z^\bullet\ar[r]\ar@{=}[dd] & 0\\
&A\hat{\otimes}_{A'}T^\bullet\ar[d]_{\tilde{\kappa}} \\
0\ar[r]& X^\bullet\ar[r]^{u_\xi}&Y^\bullet\ar[r]^{v_\xi}&Z^\bullet\ar[r]&0}
\end{equation}
where $a_T:T^\bullet\to A\hat{\otimes}_{A'}T^\bullet$ is the natural homomorphism in $C^-(B')$.
Let $\xi$ be the bottom row of $(\ref{eq:pushout})$ and let $h_\xi=s^{-1}$
in $D^-(B)$. Then $(\xi,h_\xi)$ represents a class $\eta_\xi\in\tilde{F}_{II}^0$ 
as in Definition $\ref{def:gabberclasses}$. 
Considering the triangles associated to the top and bottom rows of $(\ref{eq:pushout})$, we
obtain a commutative diagram in $D^-(B')$
\begin{equation}
\label{eq:triangle}
\xymatrix{
T^\bullet\ar[d]_{\tilde{\lambda}}\ar[r]^{\delta} & P^{0,\bullet}\ar[r]^{\epsilon}\ar[d]_\lambda 
& Z^\bullet\ar[r]^{\eta_T}\ar@{=}[d] &T^\bullet[1]\ar[d]^{\tilde{\lambda}[1]}\\
X^\bullet\ar[r]^{u_\xi}&Y^\bullet\ar[r]^{v_\xi}&Z^\bullet\ar[r]^{w_\xi}&X^\bullet[1]}
\end{equation}
where $\tilde{\lambda}=\tilde{\kappa}\circ a_T$.
Hence $\eta_\xi=h_\xi[1]\circ w_\xi=s^{-1}[1]\circ \tilde{\kappa}[1]\circ a_T[1]\circ \eta_T
=\kappa[1]\circ a_T[1]\circ\eta_T$. Thus the class $\eta_\xi\in \tilde{F}_{II}^0$
is independent of the choice of  the triple $(X^\bullet,s,\tilde{\kappa})$ 
used to represent $\kappa$, and we denote this class by $\eta_\kappa$.
In particular,
\begin{equation}
\label{eq:etakappa}
\eta_\kappa = \kappa[1]\circ a_T[1]\circ\eta_T.
\end{equation}
Since $P^{0,\bullet}$ is acyclic,
it follows that $\eta_T:Z^\bullet\to T^\bullet[1]$ is an isomorphism in $D^-(B')$.
Therefore it follows from $(\ref{eq:etakappa})$ that if $\kappa,\kappa'
\in \mathrm{Hom}_{D^-(B)}(A\hat{\otimes}_{A'}T^\bullet, J\hat{\otimes}_A
Z^\bullet)$, then $\eta_\kappa=\eta_{\kappa'}$ if and only if $\kappa\circ a_T=\kappa'\circ a_T$
in $D^-(B')$.
\end{dfn}

\begin{lemma}
\label{lem:gabberconstruct}
$\mathrm{[O.\;Gabber]}$
Assume the notation of Definition $\ref{def:gabber}$, so that in particular,
$P^{0,\bullet}$ is an acyclic complex of projective
pseudocompact $B'$-modules.
\begin{enumerate}
\item[(i)] Let $(\xi,h_\xi)$ represent a class $\eta_\xi$ in $\tilde{F}_{II}^0$ 
as in Definition $\ref{def:gabberclasses}$, and let $f_\xi: \mathrm{Tor}^{A'}_1
(Z^\bullet, A)=\HH_I^{-1}(A\hat{\otimes}_{A'}P^{\bullet,\bullet})\to X^\bullet$ be as in
$(\ref{eq:torseq})$. Then $d_2^{0,1}(h_\xi\circ f_\xi)=0$. 
Moreover, there exists 
$\kappa_\xi\in \mathrm{Hom}_{D^-(B)}(A\hat{\otimes}_{A'}T^\bullet, J\hat{\otimes}_AZ^\bullet)$ 
such that $h_\xi\circ f_\xi=\kappa_\xi\circ\sigma$ and $\eta_\xi=\eta_{\kappa_\xi}$,
where $\sigma$ is as in $(\ref{eq:seqnow2})$ and $\eta_{\kappa_\xi}$ is the class in $\tilde{F}_{II}^0$
defined by $\kappa_\xi$ as in Definition $\ref{def:pushout}$.
\item[(ii)] Conversely, suppose $f\in E_2^{0,1}=\mathrm{Hom}_{D^-(B)}(\HH_I^{-1}(A\hat{\otimes}_{A'}
P^{\bullet,\bullet}),J\hat{\otimes}_AZ^\bullet)$ satisfies $d_2^{0,1}(f)=0$.
Then there exists $\kappa\in \mathrm{Hom}_{D^-(B)}(A\hat{\otimes}_{A'}T^\bullet, J\hat{\otimes}_A
Z^\bullet)$ such that $\kappa\circ\sigma=f$. Moreover, if 
$\kappa'\in \mathrm{Hom}_{D^-(B)}(A\hat{\otimes}_{A'}T^\bullet, J\hat{\otimes}_A
Z^\bullet)$ also satisfies $\kappa'\circ\sigma=f$, then there exists
$\alpha\in \mathrm{Hom}_{D^-(B)}(D^\bullet,J\hat{\otimes}_AZ^\bullet)\cong \mathrm{Ext}^1_{D^-(B)}(
Z^\bullet,J\hat{\otimes}_AZ^\bullet)=E_2^{1,0}$ with $\kappa-\kappa'=\alpha\circ\tau$.
Let $\eta_\kappa\in \tilde{F}_{II}^0$ be the class defined by $\kappa$ as in Definition 
$\ref{def:pushout}$ 
and let $(\xi,h_\xi)$ be a representative.
Then the corresponding morphism 
$f_\xi: \HH_I^{-1}(A\hat{\otimes}_{A'}P^{\bullet,\bullet})\to X^\bullet$ from
$(\ref{eq:torseq})$ satisfies $h_\xi\circ f_\xi=f$.
\item[(iii)] Let $\tilde{F}_{II}^1$ be the subset of $\tilde{F}_{II}^0$ consisting of classes represented
by short exact sequences as in $(\ref{eq:gabberclass})$ where $Y^\bullet$ is in $C^-(B)$.
Then $\tilde{F}_{II}^1$ is equal to the set of all classes $\eta_{\kappa_\alpha}$ in $\tilde{F}_{II}^0$
defined by $\kappa_\alpha=\alpha\circ\tau$ as in Definition $\ref{def:pushout}$ as $\alpha$ varies
over all elements in $\mathrm{Hom}_{D^-(B)}(D^\bullet,J\hat{\otimes}_AZ^\bullet)\cong E_2^{1,0}$.
\end{enumerate}
\end{lemma}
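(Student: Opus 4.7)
The overall strategy is to run Gabber's pushout construction in both directions, using Lemma \ref{lem:oyoyoy!} to translate the vanishing of $d_2^{0,1}$ into a lifting property along the triangle (\ref{eq:alpha2}), and exploiting the acyclicity of $P^{0,\bullet}$ (and hence of $A\hat{\otimes}_{A'}P^{0,\bullet}$, since $P^{0,\bullet}$ has projective, hence flat, terms) to conclude that $\alpha_1$ is an isomorphism in $D^-(B)$. This last point is what makes the obstruction-theoretic machinery reduce to cocycle computations involving only the short exact sequences (\ref{eq:seqnow1}) and (\ref{eq:seqnow2}).

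For (i), given $(\xi,h_\xi)$, I would lift the chain map $\epsilon\colon P^{0,\bullet}\to Z^\bullet$ along the termwise surjection $v_\xi\colon Y^\bullet\to Z^\bullet$ to a chain map $\lambda\colon P^{0,\bullet}\to Y^\bullet$ in $C^-(B')$ with $v_\xi\circ\lambda=\epsilon$. Such a $\lambda$ exists because the obstruction $w_\xi\circ\epsilon$ in $\mathrm{Hom}_{D^-(B')}(P^{0,\bullet},X^\bullet[1])$ vanishes (as $P^{0,\bullet}$ is acyclic with projective terms), and a chain-level representative can be chosen by the usual adjustment via a lifted chain homotopy, using projectivity of the $P^{0,-n}$. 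Restriction to $T^\bullet=\mathrm{Ker}(\epsilon)$ gives $\tilde{\lambda}\colon T^\bullet\to X^\bullet$, which factors through $a_T$ as $\tilde{\kappa}_\xi\colon A\hat{\otimes}_{A'}T^\bullet\to X^\bullet$ because $JX^\bullet=0$. Set $\kappa_\xi=h_\xi\circ\tilde{\kappa}_\xi$. The resulting map of triangles $(\tilde{\lambda},\lambda,\mathrm{id}_{Z^\bullet})$ identifies $\xi$ up to isomorphism in $D^-(B')$ with the pushout attached to $\kappa_\xi$, so $\eta_\xi=\eta_{\kappa_\xi}$; Tor-functoriality of this map of sequences then yields $f_\xi=\tilde{\kappa}_\xi\circ\sigma$, hence $h_\xi\circ f_\xi=\kappa_\xi\circ\sigma$. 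Finally, by Lemma \ref{lem:oyoyoy!}, $d_2^{0,1}(\kappa_\xi\circ\sigma)=\kappa_\xi[2]\circ\sigma[2]\circ\alpha_2[1]\circ\alpha_1=0$, since $\sigma[1]\circ\alpha_2=0$ from the rotated triangle (\ref{eq:alpha2}).

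For (ii), since $A\hat{\otimes}_{A'}P^{0,\bullet}$ is acyclic in $D^-(B)$, $\alpha_1$ is an isomorphism; by Lemma \ref{lem:oyoyoy!}, $d_2^{0,1}(f)=0$ is equivalent to $f[1]\circ\alpha_2=0$. Applying $\mathrm{Hom}_{D^-(B)}(-,J\hat{\otimes}_A Z^\bullet)$ to the triangle (\ref{eq:alpha2}) yields a long exact sequence whose connecting map sends $f\mapsto f\circ\alpha_2[-1]$, so this vanishing is precisely the condition that $f$ lies in the image of $\sigma^*$, producing the desired $\kappa$. The kernel of $\sigma^*$ is the image of $\tau^*$, and via the isomorphism induced by $\alpha_1$ one identifies $\mathrm{Hom}_{D^-(B)}(D^\bullet,J\hat{\otimes}_A Z^\bullet)\cong E_2^{1,0}$, giving the asserted uniqueness up to $\alpha\circ\tau$. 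The concluding identity $h_\xi\circ f_\xi=f$ for the pushout $\xi$ built from $\kappa$ is the same Tor-functoriality computation as in (i).

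For (iii), when $\kappa_\alpha=\alpha\circ\tau$, I would factor the pushout (\ref{eq:pushout}) through the sequence (\ref{eq:seqnow1}), which already lives in $C^-(B)$, and then push out along a chain-level representative of $\alpha$; this keeps the middle term in $C^-(B)$, establishing one inclusion. Conversely, if $Y^\bullet\in C^-(B)$, the lift $\lambda$ from (i) factors through $A\hat{\otimes}_{A'}P^{0,\bullet}$ (since $JY^\bullet=0$), and restricting to $D^\bullet\subset A\hat{\otimes}_{A'}P^{0,\bullet}$ produces $\beta\colon D^\bullet\to X^\bullet$ with $\tilde{\kappa}_\xi=\beta\circ\tau$, so $\kappa_\xi=(h_\xi\circ\beta)\circ\tau$ has the required form. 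The main obstacle is the chain-level bookkeeping in step (i): obtaining $\lambda$ as an honest chain map with $v_\xi\circ\lambda=\epsilon$ (rather than only up to homotopy) and then verifying that the resulting pushout sequence is locally isomorphic to the original $\xi$ both require careful use of the projectivity of the terms of $P^{0,\bullet}$ together with functoriality of the Tor long exact sequence.
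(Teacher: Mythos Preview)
Your proof is correct and follows essentially the same route as the paper's. The one simplification you missed: in part (i) the paper obtains the chain map $\lambda$ with $v_\xi\circ\lambda=\epsilon$ in a single step, by invoking that $P^{0,\bullet}$ is a projective \emph{object} in the abelian category $C_0^-(B')$ (Remark \ref{rem:projectivegrothendieck}) and $v_\xi$ is an epimorphism there, so no derived-category obstruction argument or homotopy adjustment is needed; the same device (with $A\hat{\otimes}_{A'}P^{0,\bullet}$ projective in $C_0^-(B)$) handles the converse in part (iii).
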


\begin{proof}
Since $P^{0,\bullet}$ is acyclic, the morphism $\alpha_1:Z^\bullet\to D^\bullet[1]$ from 
$(\ref{eq:alpha1})$ is an isomorphism in $D^-(B)$. Thus 
$\mathrm{Hom}_{D^-(B)}(D^\bullet,J\hat{\otimes}_AZ^\bullet)\cong \mathrm{Ext}^1_{D^-(B)}(
Z^\bullet,J\hat{\otimes}_AZ^\bullet)=E_2^{1,0}$.
Moreover, using Lemma \ref{lem:oyoyoy!}, we have
that $d_2^{0,1}(f)=0$ if and only if $f[1]\circ\alpha_2=0$ in $D^-(B)$.

For part (i), let $(\xi,h_\xi)$ be as in Definition \ref{def:gabberclasses},
where $\xi:\;0\to X^\bullet\xrightarrow{u_\xi}Y^\bullet\xrightarrow{v_\xi}
Z^\bullet\to 0$.
Since $P^{0,\bullet}$ is a projective object in $C^-(B')$,
there exists a commutative diagram in $C^-(B')$ of the form
\begin{equation}
\label{eq:diagg1}
\xymatrix{
0\ar[r]& T^\bullet \ar[r]^{\delta}\ar[d]_{\tilde{\lambda}} & P^{0,\bullet}\ar[r]^{\epsilon}
\ar[d]_{\lambda} & Z^\bullet \ar[r]\ar@{=}[d]&0\\
0\ar[r]&X^\bullet \ar[r]^{u_\xi}&Y^\bullet\ar[r]^{v_\xi}&Z^\bullet\ar[r]& 0.}
\end{equation}
Because $J$ annihilates the terms of $X^\bullet$, $\tilde{\lambda}$
factors as $\tilde{\lambda}=(A\hat{\otimes}_{A'}\tilde{\lambda})\circ a_T$. Hence
$\xi$ is the bottom row of a pushout diagram as in $(\ref{eq:pushout})$ with
$\tilde{\kappa}=A\hat{\otimes}_{A'}\tilde{\lambda}$. 
Letting  $\kappa_\xi=h_\xi\circ(A\hat{\otimes}_{A'}\tilde{\lambda})$ gives $\eta_\xi=\eta_{\kappa_\xi}$ 
by Definition $\ref{def:pushout}$.
Tensoring $(\ref{eq:diagg1})$ with $A$ over $A'$ and using $(\ref{eq:sequ2})$ shows that
$(A\hat{\otimes}_{A'}\tilde{\lambda})\circ\sigma=f_\xi$.
Since $\alpha_2$ and $\sigma[1]$ are 
consecutive maps in the triangle obtained by shifting $(\ref{eq:alpha2})$, 
this implies that $f_\xi[1]\circ\alpha_2=0$ in $D^-(B)$, and hence 
$d_2^{0,1}(h_\xi\circ f_\xi)=0$. Moreover, $h_\xi\circ f_\xi=\kappa_\xi\circ\sigma$.

For part (ii), assume $d_2^{0,1}(f)=0$. 
Applying the functor $\mathrm{Hom}_{D^-(B)}(-,J\hat{\otimes}_AZ^\bullet)$ to
the triangle $(\ref{eq:alpha2})$, we obtain a long exact $\mathrm{Hom}$ sequence.
By the first paragraph of the proof, $f\circ\alpha_2[-1]=0$, which 
shows that there exists $\kappa\in \mathrm{Hom}_{D^-(B)}
(A\hat{\otimes}_{A'}T^\bullet,J\hat{\otimes}_AZ^\bullet)$ with $\kappa\circ\sigma = f$.
Let $\eta_\kappa\in \tilde{F}_{II}^0$ be the class defined by $\kappa$ as in Definition 
$\ref{def:pushout}$
and let $(\xi,h_\xi)$ be a representative.
In particular, $\kappa=h_\xi\circ\tilde{\kappa}$ where
$\tilde{\kappa}$ is as in $(\ref{eq:pushout})$ and $\xi$ is the bottom row of 
$(\ref{eq:pushout})$. Tensoring
$(\ref{eq:pushout})$ with $A$ over $A'$ and using $(\ref{eq:sequ2})$ shows that
$\tilde{\kappa}\circ\sigma=f_\xi$. This implies that
$h_\xi\circ f_\xi=\kappa\circ\sigma=f$.

For part (iii), let first $\alpha\in \mathrm{Hom}_{D^-(B)}(D^\bullet,J\hat{\otimes}_AZ^\bullet)$ and
let $\kappa=\alpha\circ\tau$. Following the construction of the class 
$\eta_\kappa\in \tilde{F}_{II}^0$ in Definition $\ref{def:pushout}$ which 
has representative $(\xi,h_\xi)$, we see that we can choose 
$\tilde{\kappa}$ in $(\ref{eq:kappa})$ and in 
$(\ref{eq:pushout})$ to be of the form $\tilde{\kappa}=\tilde{\mu}\circ\tau$ for a suitable 
$\tilde{\mu}:D^\bullet\to X^\bullet$ in $C^-(B)$. Using the definitions of $\delta_D$ and $\tau$ in
$(\ref{eq:seqnow1})$ and $(\ref{eq:seqnow2})$, it follows that
$\xi$ is the bottom row of a pushout diagram in $C^-(B)$
\begin{equation}
\label{eq:pushoutalpha}
\xymatrix{
0\ar[r] & D^\bullet\ar[d]_{\tilde{\mu}}\ar[r]^(.4){\delta_D} & A\hat{\otimes}_{A'}P^{0,\bullet}\ar[r]^(.65){
A\hat{\otimes}_{A'}\epsilon}\ar[d]_{\mu} & Z^\bullet\ar[r]\ar@{=}[d] & 0\\
0\ar[r]& X^\bullet\ar[r]^{u_\xi}&Y^\bullet\ar[r]^{v_\xi}&Z^\bullet\ar[r]&0}
\end{equation}
where the first row is given by $(\ref{eq:seqnow1})$.
This implies that $\eta_\kappa=\eta_\xi$ lies in 
$\tilde{F}_{II}^1$.
To prove the converse direction, one takes the representative $(\xi,h_\xi)$ of a class in $\tilde{F}_{II}^1$
and uses that $A\hat{\otimes}_{A'}P^{0,\bullet}$  is a projective object in $C^-(B)$ to realize
$\xi$ as the bottom row of a diagram  as in $(\ref{eq:pushoutalpha})$.
Letting $\tilde{\kappa}=\tilde{\mu}\circ\tau$, it follows that $\xi$ is also the bottom row 
of a pushout diagram as in $(\ref{eq:pushout})$. Define $\kappa=h_\xi\circ\tilde{\kappa}$ and
$\alpha=h_\xi\circ\tilde{\mu}\in\mathrm{Hom}_{D^-(B)}(D^\bullet,J\hat{\otimes}_AZ^\bullet)$.
Then $\kappa=\alpha\circ\tau$ and $\eta_\xi=\eta_{\kappa}$.
\end{proof}


\subsection{Proof of Theorem $\ref{thm:bigobstructionthm}$}
\label{ss:bigobsthm}

In this subsection we prove Theorem $\ref{thm:bigobstructionthm}$ by proving 
Lemma \ref{lem:gabberfilter} given below. We use the following notation.

\begin{dfn}
Suppose $\Lambda=B'$ or $B$, and $M_1^\bullet$ and $M_2^\bullet$ are complexes in
$C^-(\Lambda)$. We say a homomorphism $f\in\mathrm{Hom}_{D^-(\Lambda)}(M_1^\bullet,
M_2^\bullet)$ is represented  by a homomorphism $f':{M_1'}^\bullet\to {M_2'}^\bullet$ in $C^-(\Lambda)$
(resp. in $D^-(\Lambda)$) if there exist isomorphisms $s_i:M_i^\bullet\to {M_i'}^\bullet$
in $D^-(\Lambda)$ for $i=1,2$ with $f=s_2^{-1}\circ f'\circ s_1$ in $D^-(\Lambda)$.
\end{dfn}

\begin{lemma}
\label{lem:gabberfilter}
Assume the notation of Definition $\ref{def:gabber}$, so that in particular, 
$P^{0,\bullet}$ is an acyclic complex of projective
pseudocompact $B'$-modules.
Let $(\xi,h_\xi)$ represent a class $\eta_\xi$ in $\tilde{F}_{II}^0$ 
as in Definition $\ref{def:gabberclasses}$, where
$\xi:\;0\to X^\bullet\xrightarrow{u_\xi}Y^\bullet\xrightarrow{v_\xi}Z^\bullet\to 0$.
Let $w_\xi\in \mathrm{Hom}_{D^-(B')}(Z^\bullet,
X^\bullet[1])$ be as in $(\ref{eq:gabbertriangle})$, and let $f_\xi: 
\HH_I^{-1}(A\hat{\otimes}_{A'}P^{\bullet,\bullet})\to X^\bullet$ be the connecting homomorphism
as in $(\ref{eq:torseq})$. 
\begin{enumerate}
\item[(i)] The class $\eta_\xi=h_\xi[1]\circ w_\xi$ lies in $F_{II}^0\,\HH^1(\mathrm{Tot}(L^{\bullet,
\bullet}))$. More precisely, $\eta_\xi$ defines an element $\beta_\xi\in L^{1,0}$ which lies in the
kernel of the first differential of $\mathrm{Tot}(L^{\bullet,\bullet})$.  This identifies $\tilde{F}_{II}^0$
with a subset of $F_{II}^0\,\HH^1(\mathrm{Tot}(L^{\bullet,\bullet}))$.
\item[(ii)] The map $\varphi_{II}^0:F_{II}^0\,\HH^1(\mathrm{Tot}(L^{\bullet,\bullet}))\to E_\infty^{0,1}$ in 
$(\ref{eq:lowdegree0})$ sends $\eta_\xi=h_\xi[1]\circ w_\xi$ to the class of $h_\xi\circ f_\xi$ in $E_\infty^{0,1}$.
This gives a surjection $\tilde{F}_{II}^0 \to E_\infty^{0,1}$. 
\item[(iii)] The image of $E_\infty^{1,0}$ in $F_{II}^0\,\HH^1(\mathrm{Tot}(L^{\bullet,\bullet}))$
under $\psi_{II}^0$ is equal to the subset $\tilde{F}_{II}^1$ of $\tilde{F}_{II}^0$ consisting of classes
represented by short exact sequences as in $(\ref{eq:gabberclass})$ where $Y^\bullet$ is in $C^-(B)$.
\item[(iv)] Fix an element $f\in \mathrm{Ker}(d_2^{0,1}:E_2^{0,1} \to E_2^{2,0})$ as in Lemma
$\ref{lem:gabberconstruct}(ii)$.  Let $\kappa$ vary over all choices of elements of 
$\mathrm{Hom}_{D^-(B)}(A\hat{\otimes}_{A'}T^\bullet, J\hat{\otimes}_AZ^\bullet)$
for which $\kappa\circ\sigma=f$.  Then the classes $\eta_\kappa$ in $\tilde{F}_{II}^0$,
as defined in Definition $\ref{def:pushout}$,
form a coset of $\psi_{II}^0(E_\infty^{1,0})$ in $F_{II}^0\,\HH^1(\mathrm{Tot}(L^{\bullet,\bullet}))$.  
\end{enumerate}
In particular,
$\tilde{F}_{II}^0 = F_{II}^0\,\HH^1(\mathrm{Tot}(L^{\bullet,\bullet}))$.
\end{lemma}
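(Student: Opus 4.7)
The plan is to establish parts (i)--(iv) in sequence, using Lemma \ref{lem:gabberconstruct} as the central technical input. That lemma translates between classes $\eta_\kappa$ arising from pushouts and morphisms $\kappa \in \mathrm{Hom}_{D^-(B)}(A\hat{\otimes}_{A'}T^\bullet, J\hat{\otimes}_A Z^\bullet)$, and the current lemma promotes that translation into a statement about the second filtration on $\HH^1(\mathrm{Tot}(L^{\bullet,\bullet}))$. The final equality $\tilde{F}_{II}^0 = F_{II}^0\,\HH^1(\mathrm{Tot}(L^{\bullet,\bullet}))$ will follow by combining (i)--(iv) via an elementary coset argument.

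For part (i), I would start from $(\xi, h_\xi)$, invoke Lemma \ref{lem:gabberconstruct}(i) to write $\eta_\xi = \eta_{\kappa_\xi}$ for some $\kappa_\xi$, and then build an explicit cocycle in $F_{II}^0(\mathrm{Tot}(L^{\bullet,\bullet}))^1$ representing it. The Cartan--Eilenberg resolution $M^{\bullet,\bullet,\bullet}$ provides, for each $q \geq 0$, a projective resolution $M^{-q,\bullet,\bullet}$ of $A\hat{\otimes}_{A'}P^{-q,\bullet}$, compatible along the $x$-direction by the splittings (\ref{eq:carteil1})--(\ref{eq:carteil2}). A projective lift of $\kappa_\xi$ through these data produces $\beta_\xi \in L^{1,0}$; the differentials (\ref{eq:Ldifferential1})--(\ref{eq:Ldifferential2}) then force $d_I^{1,0}(\beta_\xi) = 0$ and $d_{II}^{1,0}(\beta_\xi) = d_I^{0,1}(\gamma_\xi)$ for some $\gamma_\xi \in L^{0,1}$, so that $(\gamma_\xi,\beta_\xi)$ is a cocycle in $F_{II}^0$. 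Its cohomology class in $\HH^1(\mathrm{Tot}(L^{\bullet,\bullet})) \cong \mathrm{Ext}^1_{D^-(B')}(Z^\bullet, J\hat{\otimes}_A Z^\bullet)$ must be $\eta_\xi$, as can be verified by tracing through the quasi-isomorphism $\pi_M$ and the triangle identity $\eta_\kappa = \kappa[1] \circ a_T[1] \circ \eta_T$ of (\ref{eq:etakappa}).

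Parts (ii)--(iv) then follow quickly. The map $\varphi_{II}^0$ is the canonical projection $F_{II}^0\,\HH^1 \to (F_{II}^0/F_{II}^1)\,\HH^1 = E_\infty^{0,1}$, sending $(\gamma_\xi,\beta_\xi)$ to the class of $\gamma_\xi$; by the construction in (i) this $\gamma_\xi$ represents $\kappa_\xi \circ \sigma$, which Lemma \ref{lem:gabberconstruct}(i) identifies with $h_\xi \circ f_\xi$, and surjectivity onto $E_\infty^{0,1}$ is Lemma \ref{lem:gabberconstruct}(ii). For (iii), $\psi_{II}^0(E_\infty^{1,0}) = F_{II}^1\,\HH^1$ is represented by cocycles with vanishing $L^{0,1}$-component, which by Lemma \ref{lem:gabberconstruct}(iii) are exactly the $\eta_{\alpha \circ \tau}$, i.e., the elements of $\tilde{F}_{II}^1$. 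Part (iv) is then immediate: if $\kappa \circ \sigma = \kappa' \circ \sigma = f$, Lemma \ref{lem:gabberconstruct}(ii) gives $\kappa - \kappa' = \alpha \circ \tau$, and (\ref{eq:etakappa}) together with (iii) yields $\eta_\kappa - \eta_{\kappa'} \in \psi_{II}^0(E_\infty^{1,0})$. For the closing equality, any $\eta \in F_{II}^0\,\HH^1$ has image $f = \varphi_{II}^0(\eta)$, and by (ii) there is some $\eta_\kappa \in \tilde{F}_{II}^0$ with $\varphi_{II}^0(\eta_\kappa) = f$; then $\eta - \eta_\kappa \in \psi_{II}^0(E_\infty^{1,0}) = \tilde{F}_{II}^1 \subset \tilde{F}_{II}^0$ by (iii), so $\eta \in \tilde{F}_{II}^0$.

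The main obstacle is part (i): extracting an explicit cocycle $\beta_\xi \in L^{1,0}$ from the abstract morphism $\kappa_\xi$ in $D^-(B)$, and verifying that the resulting class in $\HH^1(\mathrm{Tot}(L^{\bullet,\bullet}))$ matches $\eta_\xi$. This requires a careful diagram chase through the triple complex $M^{\bullet,\bullet,\bullet}$, using the Cartan--Eilenberg splittings (\ref{eq:carteil1})--(\ref{eq:carteil2}) to reconcile the spectral sequence representative of an $\mathrm{Ext}^1_{D^-(B')}$-class with the $\eta_T$-based formula (\ref{eq:etakappa}). Once this bridge is in place, the remainder of the argument is a matter of bookkeeping against Lemma \ref{lem:gabberconstruct}.
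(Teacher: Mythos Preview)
Your overall strategy matches the paper's: reduce everything to Lemma~\ref{lem:gabberconstruct}, build an explicit cocycle for part~(i) via the Cartan--Eilenberg resolution, and then read off (ii)--(iv) from the structure of the exact sequence~(\ref{eq:lowdegree0}). The closing coset argument for $\tilde{F}_{II}^0 = F_{II}^0\,\HH^1(\mathrm{Tot}(L^{\bullet,\bullet}))$ is also what the paper is doing.

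There is, however, a systematic indexing confusion that runs through your sketch of (i)--(iii). Recall from Remark~\ref{rem:extraextra} that elements of $E_2^{p,q}$ are represented by elements of $L^{q,p}$, and the filtration $F_{II}^r$ picks out the summands with second index $p \ge r$. Thus $E_\infty^{0,1}$ is represented by elements of $L^{1,0}$, while $F_{II}^1\,\HH^1 = \psi_{II}^0(E_\infty^{1,0})$ consists of classes of cocycles supported in $L^{0,1}$. You have this reversed: you say $\varphi_{II}^0$ sends $(\gamma_\xi,\beta_\xi)$ to the class of $\gamma_\xi \in L^{0,1}$, and that $F_{II}^1$ is characterised by vanishing $L^{0,1}$-component. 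Both are backward. In the paper's construction the cocycle $\beta_\xi \in L^{1,0}$ actually satisfies $d_I^{1,0}(\beta_\xi) = 0 = d_{II}^{1,0}(\beta_\xi)$, so no auxiliary $\gamma_\xi$ is needed at all; $\varphi_{II}^0$ then sends $\eta_\xi$ to the class of $\beta_\xi$ restricted to $\mathrm{Tot}(\HH_I^{-1}(M^{\bullet,\bullet,\bullet}))$, which one identifies with $\kappa_\xi \circ \sigma = h_\xi \circ f_\xi$.

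You also underestimate part~(iii). It does not ``follow quickly'': the paper carries out explicit constructions in both directions, passing through an element $\gamma \in L^{0,0}$ built from the Cartan--Eilenberg splittings and the quasi-isomorphism $\psi_1:\mathrm{Tot}(\BB_I^0(M^{\bullet,\bullet,\bullet}))[1]\to \mathrm{Tot}(\HH_I^0(M^{\bullet,\bullet,\bullet}))$ (which represents $\alpha_1^{-1}$), in order to show that a class in $\tilde{F}_{II}^1$ has a representative of the form $-d_{II}^{0,0}(\gamma)$ and conversely that an arbitrary $\beta \in L^{0,1}$ representing a class in $F_{II}^1\,\HH^1$ arises from some $\alpha \in \mathrm{Hom}_{D^-(B)}(D^\bullet,J\hat{\otimes}_A Z^\bullet)$. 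This is genuine work with the triple complex, comparable in difficulty to part~(i), and Lemma~\ref{lem:gabberconstruct}(iii) by itself does not supply it.
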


\begin{proof}
Let $\kappa_\xi\in\mathrm{Hom}_{D^-(B)}(A\hat{\otimes}_{A'}T^\bullet,J\hat{\otimes}_AZ^\bullet)$ 
be as in Lemma \ref{lem:gabberconstruct}(i). It follows from $(\ref{eq:etakappa})$
that we can write $\eta_\xi$ as
\begin{equation}
\label{eq:ohbabyo1}
\eta_\xi=\kappa_\xi[1]\circ a_T[1]\circ \eta_T
\end{equation}
where $\eta_T\in\mathrm{Hom}_{D^-(B')}(Z^\bullet,T^\bullet[1])$ is as in $(\ref{eq:triangle})$
and $a_T:T^\bullet\to A\hat{\otimes}_{A'}T^\bullet$ is the natural homomorphism in $C^-(B')$.

To prove part (i), one uses the projective Cartan-Eilenberg resolution $M^{\bullet,\bullet,\bullet}$
of $A\hat{\otimes}_{A'}P^{\bullet,\bullet}$ from Definition \ref{def:spectralseqB} to represent
$\kappa_\xi$ by the homotopy class in $K^-(B)$ of a homomorphism
\begin{equation}
\label{eq:kappaduo}
\kappa_{\xi,\sharp}: \frac{\mathrm{Tot}(M^{-1,\bullet,\bullet})}{
\mathrm{Tot}(\BB_I^{-1}(M^{\bullet,\bullet,\bullet}))}\to J\hat{\otimes}_AZ^\bullet
\end{equation}
in $C^-(B)$. To find $\kappa_{\xi,\sharp}$, one 
first identifies $T^\bullet$ with $\BB_I^0(P^{\bullet,\bullet})$ by $(\ref{eq:sequ1})$
and then shows that there are quasi-isomorphisms
\begin{equation}
\label{eq:qisuno}
\frac{\mathrm{Tot}(M^{-1,\bullet,\bullet})}{\mathrm{Tot}(\BB_I^{-1}(M^{\bullet,\bullet,\bullet}))}
\xrightarrow{\overline{\pi}_M^{\,-1,\bullet}} \frac{A\hat{\otimes}_{A'}P^{-1,\bullet}}{
\BB_I^{-1}(A\hat{\otimes}_{A'}P^{\bullet,\bullet})}
\xrightarrow{\overline{A\hat{\otimes}_{A'}d_P'}}A\hat{\otimes}_{A'}\BB_I^0(P^{\bullet,\bullet})
=A\hat{\otimes}_{A'}T^\bullet
\end{equation}
in $C^-(B)$,
where  $\overline{\pi}_M^{\,-1,\bullet}$ is induced by the quasi-isomorphism
$\pi_M$ from Definition \ref{def:spectralseqB}.
Using $(\ref{eq:carteil1})$ and $(\ref{eq:carteil2})$, it follows that
$\mathrm{Tot}(M^{-1,\bullet,\bullet})\,/\,\mathrm{Tot}(\BB_I^{-1}(M^{\bullet,\bullet,\bullet}))$
is a bounded above complex of projective pseudocompact $B$-modules.
Hence the composition of  $\kappa_\xi$ with the quasi-isomorphisms in $(\ref{eq:qisuno})$
represents $\kappa_\xi$ in $D^-(B)$ and is given by the homotopy class in $K^-(B)$ of a 
homomorphism $\kappa_{\xi,\sharp}$ as in $(\ref{eq:kappaduo})$.

Let $\pi_{\BB_I^{-1}}:\mathrm{Tot}(M^{-1,\bullet,\bullet})\to \mathrm{Tot}(M^{-1,\bullet,\bullet})\,/\,
\mathrm{Tot}(\BB_I^{-1}(M^{\bullet,\bullet,\bullet}))$ be the natural projection in $C^-(B)$ and define
$\beta_{\xi,j}\in \mathrm{Hom}_B(\mathrm{Tot}(M^{-1,\bullet,\bullet})^{-j},
J\hat{\otimes}_AZ^{-j})$ by
\begin{equation}
\label{eq:beta}
\beta_{\xi,j}=\kappa_{\xi,\sharp}^{-j}\circ \pi_{\BB_I^{-1}}^{-j}
\end{equation}
for all $j$.
By $(\ref{eq:nice})$, $\beta_\xi=(\beta_{\xi,j})$ defines an element in $L^{1,0}$.
It follows from the construction that $d_I^{1,0}(\beta_\xi)=0=d_{II}^{1,0}(\beta_\xi)$. 
By considering the effect of making a different choice of $\kappa_\xi$ in $(\ref{eq:ohbabyo1})$,
one sees that the class $[\beta_\xi]$ in $F_{II}^0\,\HH^1(\mathrm{Tot}(L^{\bullet,\bullet}))$
only depends on $\eta_\xi\in\tilde{F}_{II}^0$. Hence the map $\eta_\xi\mapsto [\beta_\xi]$ shows that
$\tilde{F}_{II}^0\subseteq F_{II}^0\,\HH^1(\mathrm{Tot}(L^{\bullet,\bullet}))$.

Part (ii) is proved by considering the restriction of the homomorphism $\kappa_{\xi,\sharp}$ in
$C^-(B)$ from $(\ref{eq:kappaduo})$ to $\mathrm{Tot}(\HH_I^{-1}(M^{\bullet,\bullet,\bullet}))$.
By Lemma \ref{lem:gabberconstruct}(i), we have $h_\xi\circ f_\xi=\kappa_\xi\circ\sigma$, where 
$\sigma: \HH_I^{-1}(A\hat{\otimes}_{A'}P^{\bullet,\bullet})\to A\hat{\otimes}_{A'}
\BB_I^0(P^{\bullet,\bullet})=A\hat{\otimes}_{A'}T^\bullet$ is the homomorphism 
from $(\ref{eq:seqnow2})$. Using the projective Cartan-Eilenberg resolution
$M^{\bullet,\bullet,\bullet}$,
one sees that $\sigma$ is represented in $D^-(B)$ by 
the restriction of the composition of the quasi-isomorphisms in $(\ref{eq:qisuno})$ 
to $\mathrm{Tot}(\HH_I^{-1}(M^{\bullet,\bullet,\bullet}))$.
Since  $\kappa_{\xi,\sharp}$  represents the
composition of  $\kappa_\xi$ with the quasi-isomorphisms in $(\ref{eq:qisuno})$, it follows that
the restriction of $\kappa_{\xi,\sharp}$ to $\mathrm{Tot}(\HH_I^{-1}(M^{\bullet,\bullet,\bullet}))$ 
represents $\kappa_\xi\circ \sigma=h_\xi\circ f_\xi$.
This implies that $\varphi_{II}^0$ sends 
$\eta_\xi$
to the class of $h_\xi\circ f_\xi$ in 
$E_\infty^{0,1}$. It follows from Lemma \ref{lem:gabberconstruct}(ii) that the restriction of
$\varphi_{II}^0$ to $\tilde{F}_{II}^0$ gives a surjection $\tilde{F}_{II}^0 \to E_\infty^{0,1}$.

To prove part (iii), one relates
the elements of $\tilde{F}_{II}^1$ and of $F_{II}^1\,\HH^1(\mathrm{Tot}(L^{\bullet,\bullet}))$
to elements in $L^{0,0}$, using the differentials in $(\ref{eq:Ldifferential1})$ and 
$(\ref{eq:Ldifferential2})$.
Let first $(\xi,h_\xi)$ represent a class in $\tilde{F}_{II}^1$.
By Lemma \ref{lem:gabberconstruct}(iii), there exists a morphism
$\alpha_\xi\in\mathrm{Hom}_{D^-(B)}(D^\bullet,J\hat{\otimes}_{A'}Z^\bullet)$ such that
$\eta_\xi=\eta_{\kappa_\xi}$ for $\kappa_\xi=\alpha_\xi\circ\tau$. 
By analyzing
the construction of $\beta_\xi=(\beta_{\xi,j})\in L^{1,0}$ in $(\ref{eq:beta})$ 
for $\kappa_\xi=\alpha_\xi\circ\tau$, one shows that there exists 
$\gamma=(\gamma_j) \in L^{0,0}$ such that
\begin{equation}
\label{eq:sooodumb}
[\beta_\xi]=[-d_{II}^{0,0}(\gamma)]
\end{equation}
in $F_{II}^0\,\HH^1(\mathrm{Tot}(L^{\bullet,\bullet}))$.
To construct $\gamma=(\gamma_j)$, one represents $\alpha_\xi$  by a homomorphism of complexes 
\begin{equation}
\label{eq:kappaduo1}
\alpha_{\xi,\sharp}: \mathrm{Tot}(\BB_I^0(M^{\bullet,\bullet,\bullet}))\to J\hat{\otimes}_AZ^\bullet
\end{equation}
in $C^-(B)$ and defines $\gamma_j \in \mathrm{Hom}_B(\mathrm{Tot}(
M^{0,\bullet,\bullet})^{-j},J\hat{\otimes}_AZ^{-j})$ by
\begin{equation}
\label{eq:L00}
\gamma_j=\alpha_{\xi,\sharp}^{-j}\circ \mathrm{proj}_{0,j},
\end{equation}
where $\mathrm{proj}_{0,j}:\mathrm{Tot}(M^{0,\bullet,\bullet})^{-j}\to
\mathrm{Tot}(\BB_I^0(M^{\bullet,\bullet,\bullet}))^{-j}$
is induced by the projections
$M^{0,-y,-z}\to \BB_I^0(M^{\bullet,-y,-z})$ for all $y,z$ with $y+z=j$
coming from the split exact sequences $(\ref{eq:carteil1})$ and $(\ref{eq:carteil2})$.
Using $(\ref{eq:Ldifferential1})$, one checks that 
$[\beta_\xi]=[d_I^{0,0}(\gamma)]$ in $F_{II}^0\,\HH^1(\mathrm{Tot}(L^{\bullet,\bullet}))$, 
which implies (\ref{eq:sooodumb}) because $[d_{\mathrm{Tot}(L)}^0(\gamma)]=0$.
Hence $[\beta_\xi]$ is equal to an element in $F_{II}^1\,\HH^1(\mathrm{Tot}(L^{\bullet,\bullet}))
=\psi_{II}^0(E_\infty^{1,0})$.

Conversely, suppose $\beta=(\beta_j)\in L^{0,1}
=\bigoplus_j \mathrm{Hom}_B(\mathrm{Tot}(M^{0,\bullet,\bullet})^{-j},J\hat{\otimes}_AZ^{-j+1})$ 
represents a class in $F_{II}^1\,\HH^1(\mathrm{Tot}(L^{\bullet,\bullet}))=\psi_{II}^0(E_\infty^{1,0})$. 
One uses $\beta$ to construct a representative $(\xi,h_\xi)$ in $\tilde{F}_{II}^1$ such that
the corresponding element $\beta_\xi=(\beta_{\xi,j})\in L^{1,0}$ defined by $(\ref{eq:beta})$ satisfies
\begin{equation}
\label{eq:soomuchdumber}
[\beta]=[\beta_\xi]
\end{equation}
in $F_{II}^0\,\HH^1(\mathrm{Tot}(L^{\bullet,\bullet}))$.
To find $(\xi,h_\xi)$, one first shows that there exists an element 
$\gamma_\beta=(\gamma_{\beta,j})\in L^{0,0}$ with
\begin{equation}
\label{eq:soodumber}
[\beta]=[-d_I^{0,0}(\gamma_\beta)]
\end{equation}
in $F_{II}^0\,\HH^1(\mathrm{Tot}(L^{\bullet,\bullet}))$.
To define $\gamma_\beta$, let 
$f_\beta: \mathrm{Tot}(M^{0,\bullet,\bullet})\to J\hat{\otimes}_AZ^\bullet[1]$
be the map given by $f_\beta^{-j}=\beta_j$ for all $j$. Because $(\beta_j)\in L^{0,1}$, 
it follows that $f_\beta$ is a homomorphism in $C^-(B)$ that factors
through $\mathrm{Tot}(\HH_I^0(M^{\bullet,\bullet,\bullet}))=\mathrm{Tot}(M^{0,\bullet,\bullet})\,/
\,\mathrm{Tot}(\BB_I^0(M^{\bullet,\bullet,\bullet}))$. Let 
\begin{equation}
\label{eq:arrgghh!!}
\overline{f_\beta}:\mathrm{Tot}(\HH_I^0(M^{\bullet,\bullet,\bullet})) \to
J\hat{\otimes}_{A'}Z^\bullet[1]
\end{equation}
be the induced homomorphism in $C^-(B)$.
Since  $P^{0,\bullet}$ is acyclic, the morphism
$\alpha_1:Z^\bullet\to D^\bullet[1]$  from $(\ref{eq:alpha1})$ is an isomorphism in $D^-(B)$,
and we can use the projective Cartan-Eilenberg resolution $M^{\bullet,\bullet,\bullet}$
to represent the inverse of $\alpha_1$ by  a quasi-isomorphism
\begin{equation}
\label{eq:psi}
\psi_1:\mathrm{Tot}(\BB_I^0(M^{\bullet,\bullet,\bullet}))[1]\to
\mathrm{Tot}(\HH_I^0(M^{\bullet,\bullet,\bullet}))
\end{equation}
in $C^-(B)$. Define $\gamma_\beta=(\gamma_{\beta,j})\in L^{0,0}$
by
\begin{equation}
\label{eq:anotherL00}
\gamma_{\beta,j}=\overline{f_\beta}^{\,-j-1}\circ \psi_1^{-j-1}\circ\mathrm{proj}_{0,j}
\end{equation}
where $\mathrm{proj}_{0,j}$ is as in $(\ref{eq:L00})$.
Using $(\ref{eq:Ldifferential2})$, one checks that $[\beta]=[d_{II}^{0,0}(\gamma_\beta)]$
in $F_{II}^0\,\HH^1(\mathrm{Tot}(L^{\bullet,\bullet}))$,
which implies $(\ref{eq:soodumber})$.
Define $\hat{\alpha}_\beta:\mathrm{Tot}(\BB_I^0(M^{\bullet,\bullet,\bullet}))\to J\hat{\otimes}_AZ^\bullet$
in $C^-(B)$ by 
\begin{equation}
\label{eq:alphalpha!}
\hat{\alpha}_\beta=-\overline{f_\beta}[-1]\circ \psi_1[-1].
\end{equation}
It follows that
$\hat{\alpha}_\beta$ represents a morphism $\alpha\in\mathrm{Hom}_{D^-(B)}(D^\bullet,
J\hat{\otimes}_AZ^\bullet)$. By Lemma \ref{lem:gabberconstruct}(iii),
$\alpha\circ\tau$ defines a class  in $\tilde{F}_{II}^1$. Let $(\xi,h_\xi)$
be a representative of this class. 
Since $\eta_\xi=\eta_{\kappa_\xi}$ for $\kappa_\xi=\alpha\circ\tau$, one can take 
the morphism $\alpha_\xi$ from the beginning of the proof of part (iii) to be $\alpha_\xi=\alpha$.
This implies that in $(\ref{eq:kappaduo1})$ one can take
$\alpha_{\xi,\sharp}=\hat{\alpha}_\beta$.
Using $(\ref{eq:sooodumb})$ and comparing $\gamma_j$ in $(\ref{eq:L00})$ to $\gamma_{\beta,j}$
in $(\ref{eq:anotherL00})$, one sees $(\ref{eq:soomuchdumber})$.

Part (iv) follows from part (iii) above and from parts (ii) and (iii) of
Lemma \ref{lem:gabberconstruct}.
\end{proof}


\subsection{Proof of Lemma $\ref{lem:extra}$ and Theorem $\ref{thm:obstructions}$}
\label{ss:obstruct}

In this subsection we prove Lemma $\ref{lem:extra}$ and Theorem $\ref{thm:obstructions}$  by proving 
Lemmas \ref{lem:takethisout} and \ref{lem:gabberlift} below. The proof relies on Lemmas 
\ref{lem:gabberconstruct} and \ref{lem:gabberfilter} and the following result.

\begin{lemma}
\label{lem:gabberlemma1}
$\mathrm{[O.\;Gabber]}$
Assume Hypothesis $\ref{hypo:obstruct}$, and suppose we have a short exact sequence in
$C^-(B')$
\begin{equation}
\label{eq:sequ}
\xi:\qquad 0\to X^\bullet \xrightarrow{u_\xi} Y^\bullet \xrightarrow{v_\xi} Z^\bullet\to 0
\end{equation}
where the terms of $X^\bullet$  are annihilated by $J$. 
Let $f_\xi:\HH^{-1}_I(A\hat{\otimes}_{A'}P^{\bullet,\bullet})=\mathrm{Tor}^{A'}_1(Z^\bullet,A)
\to X^\bullet$ be the homomorphism in $C^-(B)$ resulting from tensoring $\xi$ with $A$
over $A'$.
Then $f_\xi$ is an isomorphism in $D^-(B)$ if and only if 
the homomorphism $\upsilon:A\hat{\otimes}_{A'}^{\LL} Y^\bullet \to Z^\bullet$ 
induced by $A\hat{\otimes}_{A'}^{\LL}-$ is an isomorphism in $D^-(B)$.
\end{lemma}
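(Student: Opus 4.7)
The plan is to compare the hypertor sequence $(\ref{eq:torseq})$ of complexes, obtained by underived tensoring $\xi$ with $A$ over $A'$, against the long exact cohomology sequence of the triangle in $D^-(B)$ obtained by applying $A\hat{\otimes}^{\LL}_{A'}-$ to the exact triangle $X^\bullet\to Y^\bullet\to Z^\bullet\xrightarrow{w_\xi}X^\bullet[1]$ from $\xi$.

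First I would observe that by the universal property of the derived tensor product, $\upsilon$ factors naturally in $D^-(B)$ as
$$A\hat{\otimes}^{\LL}_{A'}Y^\bullet\xrightarrow{\tau_Y}A\hat{\otimes}_{A'}Y^\bullet\xrightarrow{A\hat{\otimes}_{A'}v_\xi}Z^\bullet,$$
where $\tau_Y$ is the canonical truncation from the derived to the underived tensor product. Hence $\upsilon$ is an isomorphism in $D^-(B)$ if and only if (a) $\tau_Y$ is one, that is, the higher Tor complexes $\mathrm{Tor}^{A'}_n(Y^\bullet,A)$ vanish in $D^-(B)$ for $n\ge 1$, and (b) $A\hat{\otimes}_{A'}v_\xi$ is a quasi-isomorphism of complexes. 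Reading off the tail $X^\bullet\to A\hat{\otimes}_{A'}Y^\bullet\to Z^\bullet\to 0$ of $(\ref{eq:torseq})$ and using exactness at $X^\bullet$, condition (b) is equivalent to $f_\xi$ being a quasi-surjection onto $X^\bullet$.

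Next I would apply $A\hat{\otimes}^{\LL}_{A'}-$ to the triangle from $\xi$ and pass to the long exact cohomology sequence in $D^-(B)$. Using that $JX^\bullet=0$ and $JZ^\bullet=0$, the natural triangles obtained by tensoring $0\to J\to A'\to A\to 0$ respectively with $X^\bullet$ and with $Z^\bullet$ both have connecting map (multiplication by $J$) equal to zero and therefore split, giving natural decompositions
$$A\hat{\otimes}^{\LL}_{A'}X^\bullet\cong X^\bullet\oplus J\hat{\otimes}^{\LL}_{A'}X^\bullet[1],\qquad A\hat{\otimes}^{\LL}_{A'}Z^\bullet\cong Z^\bullet\oplus J\hat{\otimes}_AZ^\bullet[1].$$
These splittings identify the relevant segment of the derived cohomology LES with $(\ref{eq:torseq})$ (via $\iota$ for the $Z^\bullet$ part), and exhibit $f_\xi$ as essentially a connecting homomorphism extracted from the octahedral diagram for $\tilde{\upsilon}=A\hat{\otimes}^{\LL}_{A'}v_\xi$ and $\tau_Y$. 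Exactness then translates the remaining condition --- that $f_\xi$ is a quasi-injection --- into the vanishing of the higher Tor's of $Y^\bullet$, i.e., condition (a). Combined with the previous paragraph this yields the desired equivalence.

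The hard part will be carefully handling the $X^\bullet$ side of the identification, since $X^\bullet$ need not be $A$-flat and so $J\hat{\otimes}^{\LL}_{A'}X^\bullet$ generally contributes nontrivial higher cohomology that must be verified to cancel in the combined exact sequence. One way to bypass this is to represent $A\hat{\otimes}^{\LL}_{A'}Y^\bullet$ using a resolution of $Y^\bullet$ adapted to the choices of Definition $\ref{def:spectralseqA}$ and Definition $\ref{def:gabber}$, where the acyclicity of $P^{0,\bullet}$ makes the identifications on both sides compatible in $C^-(B)$ and the matching with $(\ref{eq:torseq})$ is visible at the level of complexes.
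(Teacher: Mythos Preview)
Your argument has a genuine gap at the sentence ``Hence $\upsilon$ is an isomorphism in $D^-(B)$ if and only if (a) $\tau_Y$ is one \ldots\ and (b) $A\hat{\otimes}_{A'}v_\xi$ is a quasi-isomorphism.'' The factorization $\upsilon=(A\hat{\otimes}_{A'}v_\xi)\circ\tau_Y$ gives you only the trivial direction (a) and (b) $\Rightarrow$ $\upsilon$ iso. A composition of two non-isomorphisms can perfectly well be an isomorphism, so the converse needs a separate argument---and that converse is precisely where the content of the lemma lies. Your later octahedral/splitting discussion does not supply it: the splitting $A\hat{\otimes}^{\LL}_{A'}X^\bullet\cong X^\bullet\oplus (J\hat{\otimes}^{\LL}_{A'}X^\bullet)[1]$ introduces an uncontrolled summand (as you acknowledge, $X^\bullet$ need not be $A$-flat), and there is no exactness statement in sight that isolates ``$f_\xi$ is a quasi-injection'' from the rest of the long exact sequence. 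In effect, the claim you label ``hence'' is equivalent in strength to what you are trying to prove.

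The paper's proof bypasses this factorization entirely. It picks a projective resolution $\rho:Q^\bullet\to Y^\bullet$, writes down the explicit map of short exact sequences
\[
\begin{array}{ccccccccc}
0&\to& J\hat{\otimes}_{A'}Q^\bullet&\to& Q^\bullet&\to& A\hat{\otimes}_{A'}Q^\bullet&\to&0\\
&&\downarrow&&\downarrow\scriptstyle\rho&&\downarrow\scriptstyle\upsilon_Q&&\\
0&\to& X^\bullet&\to& Y^\bullet&\to& Z^\bullet&\to&0
\end{array}
\]
and establishes the single key identity $\mu_Y\circ(J\hat{\otimes}_{A'}\rho)=f_\xi\circ\iota^{-1}\circ(J\hat{\otimes}_A\upsilon_Q)$ in $C^-(B)$. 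The direction $\upsilon$ iso $\Rightarrow$ $f_\xi$ iso is then immediate from the five-lemma on cones. For the reverse direction the paper does \emph{not} try to show (a) and (b) separately; instead it passes to the triangle on mapping cones $C(J\hat{\otimes}_A\upsilon_Q)^\bullet\to C(\rho)^\bullet\to C(\upsilon_Q)^\bullet$ and runs a descending induction on degree, using that $C(\rho)^\bullet$ is acyclic, that the terms of $C(\upsilon_Q)^\bullet$ are topologically free over $A$ (so $J\hat{\otimes}_A-$ behaves well), and that everything is bounded above. This degree-by-degree argument is the substitute for your unjustified biconditional, and it is where the hypotheses on $Z^\bullet$ in Hypothesis~\ref{hypo:obstruct} are actually used.
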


\begin{rem}
\label{rem:specialups}
The homomorphism $\upsilon:A\hat{\otimes}_{A'}^{\LL} Y^\bullet \to Z^\bullet$ in $D^-(B)$
in Lemma \ref{lem:gabberlemma1}
is given as follows. Let $Q^\bullet$ be a bounded above complex 
of projective  pseudocompact $B'$-modules such that there is a quasi-isomorphism 
$\rho:Q^\bullet\to Y^\bullet$ in $C^-(B')$ that is surjective on terms. 
Then $\upsilon$ is represented in $D^-(B)$ by a homomorphism 
$\upsilon_Q:A\hat{\otimes}_{A'}Q^\bullet\to Z^\bullet$ in $C^-(B)$
which is the composition
\begin{equation}
\label{eq:tobeornottobe}
A\hat{\otimes}_{A'}Q^\bullet\xrightarrow{A\hat{\otimes}_{A'}\rho} 
A\hat{\otimes}_{A'}Y^\bullet\xrightarrow{A\hat{\otimes}_{A'}v_\xi}
A\hat{\otimes}_{A'}Z^\bullet=Z^\bullet.
\end{equation}
\end{rem}

\begin{proof}
Let $Q^\bullet$, $\rho$ and $\upsilon_Q$ be as in Remark \ref{rem:specialups} so that $\upsilon_Q$
represents $\upsilon$.
We obtain a commutative diagram in $C^-(B')$ with exact rows 
\begin{equation}
\label{eq:triangle00}
\xymatrix{
0\ar[r]&J\hat{\otimes}_{A'}Q^\bullet\ar[d]_{\mu_Y\circ(J\hat{\otimes}_{A'}\rho)}\ar[r]&Q^\bullet\ar[r]
\ar[d]^{\rho}&A\hat{\otimes}_{A'}Q^\bullet \ar[r]\ar[d]^{\upsilon_Q} &0\\
0\ar[r]&X^\bullet\ar[r]^{u_\xi} & Y^\bullet \ar[r]^{v_\xi}& Z^\bullet \ar[r]& 0}
\end{equation}
where $\mu_Y: J\hat{\otimes}_{A'}Y^\bullet\to X^\bullet$ is the composition of the natural
homomorphisms $J\hat{\otimes}_{A'}Y^\bullet \to JY^\bullet \to X^\bullet$.
By tensoring the diagram $(\ref{eq:triangle00})$ with $A$ over $A'$, and by also tensoring
$\upsilon_Q:A\hat{\otimes}_{A'}Q^\bullet\to Z^\bullet$ with $0\to J\to A'\to A\to 0$ over $A'$,
one sees that in $C^-(B)$
\begin{equation}
\label{eq:olala}
\mu_Y\circ (J\hat{\otimes}_{A'}\rho) = f_\xi\circ\iota^{-1}\circ (J\hat{\otimes}_A\upsilon_Q),
\end{equation}
where $\iota:\HH^{-1}_I(A\hat{\otimes}_{A'}P^{\bullet,\bullet})\to J\hat{\otimes}_{A}Z^\bullet$ 
is the isomorphism in $C^-(B)$ from Definition \ref{def:iota}. 

To prove the lemma, 
suppose first that $\upsilon$, and hence $\upsilon_Q$, is an isomorphism in $D^-(B)$. 
Since $\rho$ is a quasi-isomorphism in $C^-(B')$, one sees,
using $(\ref{eq:triangle00})$, that $\mu_Y\circ(J\hat{\otimes}_{A'}\rho)$
is a quasi-isomorphism in $C^-(B)$. By $(\ref{eq:olala})$, this implies that
$f_\xi$ is an isomorphism in $D^-(B)$.

Conversely, suppose  that $f_\xi$ is an isomorphism in $D^-(B)$. 
Rewriting $(\ref{eq:triangle00})$ with the aid of $(\ref{eq:olala})$, one obtains a
commutative diagram with exact rows in $C^-(B')$
\begin{equation}
\label{eq:triangle11}
\xymatrix{
0\ar[r] & J\hat{\otimes}_{A'}Q^\bullet \ar[r]\ar[d]_{J\hat{\otimes}_A\upsilon_Q}& 
Q^\bullet \ar[r]\ar[d]^{\rho} & 
A\hat{\otimes}_{A'}Q^\bullet\ar[r]\ar[d]^{\upsilon_Q}&0\\
&J\hat{\otimes}_AZ^\bullet\ar[r]^{u_\xi'}\ar[d]& Y^\bullet\ar[r]^{v_\xi}\ar[d]& Z^\bullet\ar[r]\ar[d]& 0\\
&C(J\hat{\otimes}_A\upsilon_Q)^\bullet\ar[r]&C(\rho)^\bullet\ar[r]&C(\upsilon_Q)^\bullet}
\end{equation}
where $u_\xi'=u_\xi\circ f_\xi\circ\iota^{-1}$. Because
$f_\xi\circ\iota^{-1}:J\hat{\otimes}_AZ^\bullet\to X^\bullet$ is  an isomorphism in $D^-(B')$, 
the rows in $(\ref{eq:triangle11})$ represent triangles in $D^-(B')$. 
Using the triangle corresponding to the last row in $(\ref{eq:triangle11})$, one argues 
inductively that $C(\upsilon_Q)^\bullet$ is acyclic.
To make this argument, one uses that $C(\rho)^\bullet$ is acyclic, that 
the terms of $C(\upsilon_Q)^\bullet$ are topologically free over $A$ and that all 
complexes involved are bounded above. The acyclicity of  
$C(\upsilon_Q)^\bullet$ implies that $\upsilon_Q$, and hence $\upsilon$, is an 
isomorphism in $D^-(B)$.
\end{proof}

We also need the following result which relates quasi-lifts of $(Z^\bullet,\zeta)$
over $A'$ to short exact sequences $\xi$ in $C^-(B')$ as in Definition $\ref{def:gabberclasses}$.

\begin{lemma}
\label{lem:takethisout}
Assume Hypothesis $\ref{hypo:obstruct}$ and the notation introduced in Definition
$\ref{def:gabberclasses}$.
Suppose $(Y^\bullet,\upsilon)$ is a quasi-lift  of $(Z^\bullet,\zeta)$ over $A'$.  Then
there exists a quasi-lift $({Y'}^\bullet,\upsilon')$ of $(Z^\bullet,\zeta)$ over $A'$
which is locally isomorphic to $(Y^\bullet,\upsilon)$ with the following properties:
\begin{itemize}
\item[(a)]
There is a short exact sequence
$\xi':0\to {X'}^\bullet\to {Y'}^\bullet\to Z^\bullet\to 0$ in $C^-(B')$ as in Definition 
$\ref{def:gabberclasses}$, i.e. the terms of ${X'}^\bullet$ are annihilated by $J$ and there is an 
isomorphism  ${X'}^\bullet\to J\hat{\otimes}_AZ^\bullet$ in $D^-(B)$.
\item[(b)] The isomorphism $\upsilon':A\hat{\otimes}^{\LL}_{A'}{Y'}^\bullet \to Z^\bullet$ is the
homomorphism in $D^-(B)$ from Lemma  $\ref{lem:gabberlemma1}$ 
which is induced by $A\hat{\otimes}^{\LL}_{A'}-$ relative to $\xi'$.
\end{itemize}
\end{lemma}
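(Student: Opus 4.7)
The plan is to apply the pushout construction of Definition \ref{def:pushout} to a morphism $\kappa$ constructed naturally from $(Y^\bullet, \upsilon)$, and to compare the resulting middle term with $Y^\bullet$.

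First I would carry out a reduction. By Theorem \ref{thm:derivedresult} we may assume $Y^\bullet$ is a bounded above complex of topologically free pseudocompact $B'$-modules. Then $A\hat\otimes_{A'} Y^\bullet$ is a bounded above complex of topologically free (hence projective) pseudocompact $B$-modules, so $\upsilon$ is represented by a chain map $\upsilon_Y: A\hat\otimes_{A'} Y^\bullet \to Z^\bullet$ in $C^-(B)$. Replacing $Y^\bullet$ by $Y^\bullet\oplus P^{0,\bullet}$ (as a direct sum of $B'$-complexes) and $\upsilon_Y$ by $\upsilon_Y\oplus\epsilon$ preserves the local isomorphism class because $P^{0,\bullet}$ is acyclic, and after this replacement $\upsilon_Y$ is surjective on terms. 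Consequently $\lambda_Y=\upsilon_Y\circ q_Y:Y^\bullet\to Z^\bullet$ is surjective, and projectivity of $P^{0,\bullet}$ in $C_0^-(B')$ yields a lift $\Lambda:P^{0,\bullet}\to Y^\bullet$ with $\lambda_Y\circ\Lambda=\epsilon$.

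Next I would produce $\kappa$. The restriction $\Lambda|_{T^\bullet}$ takes values in $\ker(\lambda_Y)$, which contains $JY^\bullet$ and whose quotient $\ker(\lambda_Y)/JY^\bullet=\ker(\upsilon_Y)$ is acyclic (since $\upsilon_Y$ is a surjective quasi-isomorphism). Hence $JY^\bullet\hookrightarrow\ker(\lambda_Y)$ is a quasi-isomorphism in $C^-(B)$. Moreover, because $A\hat\otimes_{A'}Y^\bullet$ is topologically flat over $A$, the induced map $JY^\bullet=J\hat\otimes_A(A\hat\otimes_{A'}Y^\bullet)\xrightarrow{J\hat\otimes_A\upsilon_Y}J\hat\otimes_A Z^\bullet$ is also a quasi-isomorphism in $C^-(B)$. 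Composing gives a morphism $T^\bullet\to J\hat\otimes_A Z^\bullet$ in $D^-(B)$, and since the target is annihilated by $J$, this factors through $a_T$ to define $\kappa:A\hat\otimes_{A'}T^\bullet\to J\hat\otimes_A Z^\bullet$ in $D^-(B)$. Applying Definition \ref{def:pushout} to $\kappa$ yields a short exact sequence $\xi':0\to {X'}^\bullet\to {Y'}^\bullet\to Z^\bullet\to 0$ in $C^-(B')$ with ${X'}^\bullet$ annihilated by $J$ and an isomorphism $h_{\xi'}:{X'}^\bullet\to J\hat\otimes_A Z^\bullet$ in $D^-(B)$; this gives (a), and (b) then follows from Lemma \ref{lem:gabberlemma1} once we know $({Y'}^\bullet,\upsilon')$ is locally isomorphic to $(Y^\bullet,\upsilon)$.

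Finally I would exhibit the local isomorphism. Choose a chain-level representative $\tilde\kappa:A\hat\otimes_{A'}T^\bullet\to X^\bullet$ and a quasi-isomorphism $s:J\hat\otimes_A Z^\bullet\to X^\bullet$ with $\kappa=s^{-1}\circ\tilde\kappa$, and form the pushout ${Y'}^\bullet=(P^{0,\bullet}\oplus X^\bullet)/\{(\delta(t),-\tilde\kappa(a_T(t)))\}$. Using $\Lambda$ and the quasi-iso $JY^\bullet\hookrightarrow\ker(\lambda_Y)$, I would construct a chain map $\phi:{Y'}^\bullet\to Y^\bullet$ by $(p,x)\mapsto\Lambda(p)+\tilde\phi(x)$, where $\tilde\phi:X^\bullet\to Y^\bullet$ is a chain-level realization of the inverse of $s$ composed with the inclusion $JY^\bullet\hookrightarrow Y^\bullet$; the compatibility on the relations $(\delta(t),-\tilde\kappa(a_T(t)))$ is arranged by the very definition of $\kappa$ via $\Lambda|_{T^\bullet}$. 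Then $\phi$ fits into morphisms of short exact sequences relating $\xi'$ to the natural sequence $0\to JY^\bullet\to Y^\bullet\to A\hat\otimes_{A'}Y^\bullet\to 0$ composed with $\upsilon_Y$, and a five-lemma argument using acyclicity of $\ker(\upsilon_Y)$ shows $\phi$ is a quasi-isomorphism in $C^-(B')$, compatible with $\upsilon$ and $\upsilon'$ after $A\hat\otimes^{\LL}_{A'}-$.

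The main obstacle is the last paragraph: reconciling the fact that $\kappa$ is only defined in $D^-(B)$ (via the zig-zag through $\ker(\lambda_Y)$) with the requirement that $\tilde\kappa$ be a chain map into a strictly $J$-annihilated complex $X^\bullet$, and then verifying that the comparison $\phi$ really is well-defined and a quasi-isomorphism. This requires making compatible chain-level choices for $\tilde\kappa$, $s$, and $\tilde\phi$, and tracking how the correction by the acyclic complex $\ker(\upsilon_Y)$ is absorbed into the pushout.
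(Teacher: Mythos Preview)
Your approach via the pushout of Definition~\ref{def:pushout} is a genuine detour, and the gap you flag in the last paragraph is real. To make $\phi:{Y'}^\bullet\to Y^\bullet$ a chain map on the pushout you need the \emph{equality} $\Lambda(\delta(t))=\tilde\phi(\tilde\kappa(a_T(t)))$ in $Y^\bullet$ for every $t$, but your $\kappa$ is built from a zig-zag $T^\bullet\to\ker(\lambda_Y)\leftarrow JY^\bullet\to J\hat\otimes_A Z^\bullet$ that only determines things up to quasi-isomorphism; there is no reason for the required equality to hold on the nose. You also cannot shortcut by taking $X^\bullet=\ker(\lambda_Y)$ directly, because $\ker(\lambda_Y)$ is not annihilated by $J$ (only $J\cdot\ker(\lambda_Y)\subset JY^\bullet$). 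So the chain-level comparison genuinely does not go through without substantial extra work.

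The paper avoids all of this by going in the opposite direction: it uses a \emph{pullback} rather than a pushout. After arranging (via Theorem~\ref{thm:derivedresult} and Remark~\ref{rem:dumbdumb}) that $Y^\bullet$ is bounded with good terms, the key observation is that since $Z^i$ is projective for $i>-p_0$, the inverse $\upsilon^{-1}$ is represented by an honest chain map $\chi:Z^\bullet\to A\hat\otimes_{A'}Y^\bullet$. Pulling back the natural sequence $0\to J\hat\otimes_{A'}Y^\bullet\to Y^\bullet\to A\hat\otimes_{A'}Y^\bullet\to 0$ along $\chi$ produces ${Y'}^\bullet$ together with a chain map $\chi_Y:{Y'}^\bullet\to Y^\bullet$ that is automatically a quasi-isomorphism (five lemma), and ${X'}^\bullet=J\hat\otimes_{A'}Y^\bullet$ is visibly annihilated by $J$ and quasi-isomorphic to $J\hat\otimes_A Z^\bullet$ via $J\hat\otimes_A\upsilon$. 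The compatibility $\upsilon\circ(A\hat\otimes^{\LL}_{A'}\chi_Y)=\upsilon'$ then drops out of Remark~\ref{rem:specialups}. The pullback buys you the chain-level comparison for free, whereas your pushout forces you to manufacture it afterwards.
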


\begin{proof}
Using Theorem \ref{thm:derivedresult} and Remark \ref{rem:dumbdumb}, we may assume that the terms  $Y^i$ of $Y^\bullet$
are zero for $i<-p_0$ and $i>-1$, they are  projective pseudocompact 
$B'$-modules for $-p_0<i\le -1$, and $Y^{-p_0}$ is topologically free over $A'$.
Since the terms $Z^i$ of $Z^\bullet$ are projective pseudocompact $B$-modules
for $i>-p_0$, it follows that the inverse of the isomorphism
$\upsilon:A\hat{\otimes}_{A'}Y^\bullet\to Z^\bullet$ in $D^-(B)$ can be represented by
a quasi-isomorphism $\chi:Z^\bullet \to A\hat{\otimes}_{A'}Y^\bullet$ in $C^-(B)$.
We obtain a pullback diagram in $C^-(B')$ with exact rows
\begin{equation}
\label{eq:ohsomany}
\xymatrix{
0\ar[r] & J\hat{\otimes}_{A'}Y^\bullet \ar[r] &{Y'}^\bullet \ar[r] \ar[d]_{\chi_Y}  & Z^\bullet \ar[r]
\ar[d]^{\chi} & 0\\
0\ar[r]& J\hat{\otimes}_{A'}Y^\bullet \ar@{=}[u] \ar[r] & Y^\bullet \ar[r] & A\hat{\otimes}_{A'}Y^\bullet
\ar[r] & 0.}
\end{equation}
It follows that $\chi_Y$ is a quasi-isomorphism in $C^-(B')$. 
Letting ${X'}^\bullet = J\hat{\otimes}_{A'}Y^\bullet$, 
the top row of $(\ref{eq:ohsomany})$ defines a short exact sequence $\xi'$ as in part (a).
To prove part (b), let $\upsilon':A\hat{\otimes}^{\LL}_{A'}{Y'}^\bullet \to 
Z^\bullet$ be the homomorphism in $D^-(B)$ from Lemma  $\ref{lem:gabberlemma1}$, 
which is induced by $A\hat{\otimes}^{\LL}_{A'}-$ relative to the top row $\xi'$ of $(\ref{eq:ohsomany})$. 
By representing $\upsilon'$ by a homomorphism in $C^-(B)$
as in Remark \ref{rem:specialups}, one sees that in $D^-(B)$
$$A\hat{\otimes}_{A'}^{\LL}\chi_Y = \chi \circ \upsilon'= \upsilon^{-1}\circ \upsilon'.$$
Hence $\upsilon'$ is an isomorphism in $D^-(B)$, and $\chi_Y$ defines a local
isomorphism between the quasi-lifts $(Y^\bullet,\upsilon)$ and $({Y'}^\bullet,\upsilon')$
of $(Z^\bullet,\zeta)$ over $A'$. 
\end{proof}

\begin{lemma}
\label{lem:gabberlift}
Assume the notation of Definition $\ref{def:gabber}$, so that in particular, 
$P^{0,\bullet}$ is an acyclic
complex of projective pseudocompact $B'$-modules.
Let $\iota:\HH_I^{-1}(A\hat{\otimes}_{A'}P^{\bullet,\bullet})\to J\hat{\otimes}_A Z^\bullet$ be
the isomorphism in $C^-(B)$ 
from Definition $\ref{def:iota}$, so $\iota\in E_2^{0,1}$.
Let $\omega=\omega(Z^\bullet,A')$ be the class
$\omega=d_2^{0,1}(\iota)\in E_2^{2,0}=
\mathrm{Ext}^2_{D^-(B)}(Z^\bullet,J\hat{\otimes}_AZ^\bullet)$.
\begin{enumerate}
\item[(i)] Suppose $(Z^\bullet,\zeta)$ has a quasi-lift $(Y^\bullet,\upsilon)$ over $A'$.  Then $\omega=0$.
\item[(ii)] Conversely, suppose that $\omega=0$. 
\begin{enumerate}
\item[(a)]
There exists $\kappa\in\mathrm{Hom}_{D^-(B)}(
A\hat{\otimes}_{A'}T^\bullet,J\hat{\otimes}_AZ^\bullet)$ with $\kappa\circ\sigma = \iota$. 
Let $(\xi,h_\xi)$ represent the class $\eta_\kappa$ in $\tilde{F}_{II}^0$, as defined in 
Definition $\ref{def:pushout}$, where
$\xi:\;0\to X^\bullet\xrightarrow{u_\xi}Y^\bullet\xrightarrow{v_\xi}Z^\bullet\to 0$. 
Let $\upsilon:A\hat{\otimes}^{\LL}_{A'}Y^\bullet \to Z^\bullet$ be the homomorphism
in $D^-(B)$ from Lemma $\ref{lem:gabberlemma1}$  relative to $\xi$. Then 
$(Y^\bullet,\upsilon)$ is a quasi-lift of $(Z^\bullet,\zeta)$ over $A'$, which we denote by 
$(Y_\kappa^\bullet,\upsilon_\kappa)$. 
\item[(b)]
Let $\Xi$ be the set of the classes $\eta_\kappa$ in $\tilde{F}_{II}^0$ as
$\kappa$ varies over all choices of elements of 
$\mathrm{Hom}_{D^-(B)}(A\hat{\otimes}_{A'}T^\bullet, J\hat{\otimes}_AZ^\bullet)$
with $\kappa\circ\sigma=\iota$.
Then  the map $\eta_\kappa\mapsto [(Y_\kappa^\bullet,\upsilon_\kappa)]$ defines a bijection 
between $\Xi$ and the set $\Upsilon$ of all local isomorphism classes of quasi-lifts of
$(Z^\bullet,\zeta)$ over $A'$. 
\item[(c)] Let $[\iota]$ be the class of $\iota$ in $E_\infty^{0,1}$.
The set of all local isomorphism classes of quasi-lifts of $(Z^\bullet,\zeta)$ over 
$A'$ is in bijection with the full preimage of $[\iota]$ 
in $F_{II}^0\,\HH^1(\mathrm{Tot}(L^{\bullet,\bullet}))=\tilde{F}_{II}^0$ under $\varphi_{II}^0$.
In other words, the set of all local isomorphism classes of quasi-lifts of $(Z^\bullet,\zeta)$ over 
$A'$ is a principal homogeneous space for $E_\infty^{1,0}$. The set of all 
$\kappa\in\mathrm{Hom}_{D^-(B)}(A\hat{\otimes}_{A'}T^\bullet, J\hat{\otimes}_AZ^\bullet)$
with $\kappa\circ\sigma=\iota$ is a principal homogeneous space for $E_2^{1,0}=
\mathrm{Ext}^1_{D^-(B)}(Z^\bullet, J\hat{\otimes}_AZ^\bullet)$.
\item[(d)] We have $E_2^{p,0}=E_\infty^{p,0}$ for all $p$, i.e. the spectral sequence 
$(\ref{eq:spectral})$ partially degenerates. 
\end{enumerate}
\end{enumerate}
\end{lemma}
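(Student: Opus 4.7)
The plan is to derive parts (i)--(d) by combining the formula $d_2^{0,1}(g) = g[2]\circ \alpha_2[1]\circ\alpha_1$ from Lemma \ref{lem:oyoyoy!} with the structural results in Lemmas \ref{lem:gabberconstruct}, \ref{lem:gabberfilter}, \ref{lem:gabberlemma1}, and the normalization of Lemma \ref{lem:takethisout}.  For part (i), I start from a given quasi-lift $(Y^\bullet,\upsilon)$ and use Lemma \ref{lem:takethisout} to replace it, up to local isomorphism, by a quasi-lift arising from a short exact sequence $\xi'$ with ${X'}^\bullet = J\hat{\otimes}_{A'}Y^\bullet$ annihilated by $J$; the natural isomorphism $h_{\xi'}\colon J\hat{\otimes}_{A'}Y^\bullet \xrightarrow\sim J\hat{\otimes}_AZ^\bullet$ (from $J^2=0$ plus topological $A'$-flatness of the terms of $Y^\bullet$) plays the role of $h_\xi$, and $\upsilon$ is the morphism produced by Lemma \ref{lem:gabberlemma1}.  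Since $\upsilon$ is an isomorphism, that lemma forces $f_{\xi'}$ to be an isomorphism in $D^-(B)$.  Lemma \ref{lem:gabberconstruct}(i) then gives $d_2^{0,1}(h_{\xi'}\circ f_{\xi'})=0$, i.e.\ $(h_{\xi'}\circ f_{\xi'})[2]\circ \alpha_2[1]\circ\alpha_1=0$; since $h_{\xi'}\circ f_{\xi'}$ is invertible, $\alpha_2[1]\circ\alpha_1=0$, whence $\omega = \iota[2]\circ\alpha_2[1]\circ\alpha_1 = 0$.

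For parts (ii)(a) and (b) I run this argument in reverse.  Given $\omega=0$, Lemma \ref{lem:gabberconstruct}(ii) supplies $\kappa$ with $\kappa\circ\sigma=\iota$, and Definition \ref{def:pushout} produces a representative $(\xi,h_\xi)$ of $\eta_\kappa$ with $h_\xi\circ f_\xi=\kappa\circ\sigma=\iota$; since $h_\xi$ and $\iota$ are isomorphisms, so is $f_\xi$, and Lemma \ref{lem:gabberlemma1} yields the desired quasi-lift $(Y_\kappa^\bullet,\upsilon_\kappa)$.  To check that $\Phi\colon \eta_\kappa\mapsto [(Y_\kappa^\bullet,\upsilon_\kappa)]$ is a well-defined bijection $\Xi\to\Upsilon$, note that $\eta_\kappa$ determines the short exact sequence $\xi$ up to equivalence in $D^-(B')$, hence $Y_\kappa^\bullet$ up to local isomorphism; surjectivity uses the construction of part (i) together with the naturality identification $h_{\xi'}\circ f_{\xi'}=\iota$, which follows from Definition \ref{def:iota} and the functoriality of the Tor boundary map applied to $\xi'$; injectivity is dual, since a local isomorphism of quasi-lifts extends to an equivalence of the associated extensions.

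Part (c) is a bookkeeping assembly.  Theorem \ref{thm:bigobstructionthm}(iii) gives $\varphi_{II}^0(\eta_\kappa)=[h_\xi\circ f_\xi]=[\iota]$, while Lemma \ref{lem:gabberconstruct}(ii) shows every preimage of $[\iota]$ in $\tilde{F}_{II}^0$ arises from some such $\kappa$, so $\Xi=(\varphi_{II}^0)^{-1}([\iota])$, which is a principal homogeneous space for $E_\infty^{1,0}$ by the exact sequence $(\ref{eq:lowdegree0})$.  For the last assertion, Lemma \ref{lem:gabberconstruct}(ii) already gives transitivity of the $E_2^{1,0}$-action $\kappa\mapsto \kappa+\alpha\circ\tau$; freeness follows by applying $\mathrm{Hom}_{D^-(B)}(-,J\hat{\otimes}_A Z^\bullet)$ to the triangle $(\ref{eq:alpha2})$: the long exact $\mathrm{Hom}$-sequence identifies $\ker(-\circ\tau)$ with $\mathrm{image}(-\circ\alpha_2[-1])$, which vanishes because $\omega=0$ forces $\alpha_2=0$ (the reverse of part (i), using that $\iota$ and $\alpha_1$ are isomorphisms in $D^-(B)$).

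The main obstacle is part (d), which I handle via an adjunction argument rather than a direct spectral sequence computation.  Fix the quasi-lift $(Y^\bullet,\upsilon)$ produced in (ii)(a).  The adjunction between $A\hat{\otimes}_{A'}^{\LL}\colon D^-(B')\to D^-(B)$ and the restriction functor, combined with the isomorphism $\upsilon\colon A\hat{\otimes}_{A'}^{\LL}Y^\bullet\xrightarrow\sim Z^\bullet$, yields for each $B$-module $N$ natural isomorphisms
\[
\mathrm{Ext}^p_{D^-(B')}(Y^\bullet,N)\;\cong\; \mathrm{Ext}^p_{D^-(B)}(A\hat{\otimes}_{A'}^{\LL}Y^\bullet,N)\;\cong\; \mathrm{Ext}^p_{D^-(B)}(Z^\bullet,N).
\]
Taking $N = J\hat{\otimes}_A Z^\bullet$, I consider the composite
\[
\mathrm{Ext}^p_{D^-(B)}(Z^\bullet,J\hat{\otimes}_A Z^\bullet)\xrightarrow{\mathrm{Inf}_B^{B'}}\mathrm{Ext}^p_{D^-(B')}(Z^\bullet,J\hat{\otimes}_A Z^\bullet)\xrightarrow{-\circ v_\xi}\mathrm{Ext}^p_{D^-(B')}(Y^\bullet,J\hat{\otimes}_A Z^\bullet)\xrightarrow{\sim}\mathrm{Ext}^p_{D^-(B)}(Z^\bullet,J\hat{\otimes}_A Z^\bullet),
\]
and check by unwinding the adjunction, using that $v_\xi\colon Y^\bullet\to Z^\bullet$ corresponds under adjunction to $\upsilon$, that this composite is the identity.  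Hence $\mathrm{Inf}_B^{B'}$ is injective.  Since $\mathrm{Inf}_B^{B'}$ factors as the edge map $E_2^{p,0}\twoheadrightarrow E_\infty^{p,0}\hookrightarrow \mathrm{Ext}^p_{D^-(B')}(Z^\bullet,J\hat{\otimes}_A Z^\bullet)$, this injectivity is equivalent to $E_2^{p,0}=E_\infty^{p,0}$ for all $p$.  The two most delicate verifications are the naturality identification $h_{\xi'}\circ f_{\xi'}=\iota$ underlying the surjectivity of $\Phi$ in (ii)(b), and the identity-composite claim in (d).
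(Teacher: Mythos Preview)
Your proposal follows essentially the same architecture as the paper's proof, invoking the same lemmas and the formula $d_2^{0,1}(g)=g[2]\circ\alpha_2[1]\circ\alpha_1$.  Part (i) via Lemma~\ref{lem:takethisout} is a mild repackaging of the paper's direct argument (the paper instead builds the short exact sequence $(\ref{eq:latergater1off})$ by hand from a projective model of $Y^\bullet$), and your adjunction argument for (d) is exactly the abstract form of the paper's explicit homotopy computation with $g=\pi_P^{-1}\circ\upsilon\circ a_Y$.  Two places need tightening.

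In (ii)(a) you omit the verification that $Y_\kappa^\bullet$ has finite pseudocompact $A'$-tor dimension; Lemma~\ref{lem:gabberlemma1} only yields that $\upsilon$ is an isomorphism.  The paper closes this in one line: for any pseudocompact $A'$-module $S'$, both $JS'$ and $S'/JS'$ are $A$-modules, so finite $A$-tor dimension of $Z^\bullet$ together with the isomorphism $\upsilon$ bounds $\HH^i(S'\hat{\otimes}_{A'}^{\LL}Y_\kappa^\bullet)$.

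More seriously, your injectivity argument in (ii)(b) is too loose.  A local isomorphism $\theta:Y_\kappa^\bullet\to Y_{\kappa'}^\bullet$ lives only in $D^-(B')$, so it does not literally ``extend to an equivalence of the associated extensions'' $\xi,\xi'$ in $C^-(B')$; one needs a triangle that is \emph{functorial} in $Y^\bullet$.  The paper uses the triangle $J\hat{\otimes}_{A'}^{\LL}Y_\kappa^\bullet \to Y_\kappa^\bullet \to A\hat{\otimes}_{A'}^{\LL}Y_\kappa^\bullet \xrightarrow{\eta^{\LL}_\kappa} J\hat{\otimes}_{A'}^{\LL}Y_\kappa^\bullet[1]$ coming from $0\to J\to A'\to A\to 0$, checks the compatibility $\eta_\kappa\circ\upsilon_\kappa = (J\hat{\otimes}_A^{\LL}\upsilon_\kappa[1])\circ\eta^{\LL}_\kappa$, and then invokes naturality of $\eta^{\LL}$ under $\theta$ to conclude $\eta_\kappa=\eta_{\kappa'}$.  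You should spell this out.  For surjectivity, the paper also sidesteps your ``naturality identification $h_{\xi'}\circ f_{\xi'}=\iota$'' by simply \emph{defining} $h_\xi:=\iota\circ f_\xi^{-1}$ once $f_\xi$ is known (via Lemma~\ref{lem:gabberlemma1}) to be invertible; this is cleaner than chasing Tor boundary maps.
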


\begin{proof}
For part (i), suppose $(Y^\bullet,\upsilon)$ is a quasi-lift of $(Z^\bullet,\zeta)$ over $A'$. 
Using Theorem \ref{thm:derivedresult}, we may assume that the
terms of $Y^\bullet$ are projective
pseudocompact $B'$-modules. Moreover, by adding an acyclic complex of topologically
free pseudocompact $B'$-modules to $Y^\bullet$ if necessary, we can assume that
$\upsilon:A\hat{\otimes}_{A'}Y^\bullet\to Z^\bullet$ is given by a quasi-isomorphism of complexes
in $C^-(B)$ that is surjective on terms. Hence we a have a short exact sequence
in $C^-(B')$ of the form
\begin{equation}
\label{eq:latergater1off}
 0\to K^\bullet\to Y^\bullet \xrightarrow{v_Y} Z^\bullet\to 0,
\end{equation}
where $v_Y$ is the composition $Y^\bullet\to A\hat{\otimes}_{A'}Y^\bullet\xrightarrow{\upsilon}
Z^\bullet$ and $K^\bullet=\mathrm{Ker}(v_Y)$. Note that
$K^\bullet$ may or may not be annihilated by $J$. Since $P^{0,\bullet}$ is a projective
object in $C^-(B')$, we obtain
a commutative diagram in $C^-(B')$ 
whose top (resp. bottom) row is given by $(\ref{eq:sequ1})$ (resp. $(\ref{eq:latergater1off})$).
Tensoring this diagram with $A$ over $A'$, we get a commutative diagram in $C^-(B)$ 
with exact rows 
\begin{equation}
\label{eq:diagg2alt}
\xymatrix {
0\ar[r] & \HH_I^{-1}(A\hat{\otimes}_{A'}P^{\bullet,\bullet})\ar@{=}[d]\ar[r]^(.58)\sigma&
A\hat{\otimes}_{A'}T^\bullet\ar[d]\ar[r]^(.48){A\hat{\otimes}_{A'}\delta}
& A\hat{\otimes}_{A'}P^{0,\bullet}\ar[d]\ar[r]^(.65){A\hat{\otimes}_{A'}\epsilon} 
& Z^\bullet\ar@{=}[d]\ar[r]&0\\
0\ar[r]& \HH_I^{-1}(A\hat{\otimes}_{A'}P^{\bullet,\bullet})\ar[r]^(.58){f_Y}& 
A\hat{\otimes}_{A'}K^\bullet\ar[r]&A\hat{\otimes}_{A'}Y^\bullet\ar[r]^(.6){\upsilon}&Z^\bullet\ar[r]&0
}
\end{equation}
whose top row is given by  $(\ref{eq:sequ2})$. Using the definition of $\alpha_1$ and
$\alpha_2$ in $(\ref{eq:alpha1})$ and $(\ref{eq:alpha2})$, one sees
that the top row of $(\ref{eq:diagg2alt})$ defines the class $\alpha_2[1]\circ\alpha_1$ in
$\mathrm{Ext}^2_{D^-(B)}(Z^\bullet,\HH_I^{-1}(A\hat{\otimes}_{A'}P^{\bullet,\bullet}))$.
Because $\upsilon$ is an isomorphism
in $D^-(B)$, the bottom row of $(\ref{eq:diagg2alt})$ shows that $f_Y$ is also an isomorphism in 
$D^-(B)$. Therefore, $\alpha_2[1]\circ\alpha_1=0$ in $D^-(B)$. Since 
$\omega=d_2^{0,1}(\iota)=\iota[2]\circ \alpha_2[1]\circ\alpha_1$
by Lemma \ref{lem:oyoyoy!}, this implies $\omega=0$  in $D^-(B)$.

For part (ii), assume that $\omega=d_2^{0,1}(\iota)=0$. 
By Lemma \ref{lem:gabberconstruct}(ii), there exists 
$\kappa:A\hat{\otimes}_{A'}T^\bullet\to J\hat{\otimes}_A Z^\bullet$ 
in $D^-(B)$ with $\kappa\circ\sigma=\iota$.
Let $(\xi,h_\xi)$ and $\upsilon$ be as in the statement of part (ii)(a),
where $\xi:\;0\to X^\bullet\xrightarrow{u_\xi}Y^\bullet\xrightarrow{v_\xi}Z^\bullet\to 0$.
Let $f_\xi: \HH_I^{-1}(A\hat{\otimes}_{A'}P^{\bullet,\bullet})\to X^\bullet$ be 
the homomorphism in $C^-(B)$ resulting from tensoring $\xi$ with $A$ over $A'$.
By Lemma \ref{lem:gabberconstruct}(ii),
$h_\xi\circ f_\xi=\iota$, which
implies that $f_\xi$ is an isomorphism in $D^-(B)$. Hence by Lemma 
\ref{lem:gabberlemma1},  $\upsilon:A\hat{\otimes}^{\LL}_{A'}Y^\bullet \to Z^\bullet$ 
is an isomorphism in $D^-(B)$.
Using the isomorphism $\upsilon$ together with the fact that 
$Z^\bullet$ has finite pseudocompact $A$-tor dimension, it follows that
there exists an integer $N$ such that $\HH^i(S\hat{\otimes}^{\LL}_{A'}Y^\bullet)=0$
for all $i<N$ and for all pseudocompact $A$-modules $S$.
Since for all pseudocompact $A'$-modules $S'$ we have that 
$JS'$ and $S'/JS'$ are annihilated by $J$ and thus pseudocompact $A$-modules,
one sees that $\HH^i(S'\hat{\otimes}^{\LL}_{A'}Y^\bullet)=0$ for all $i<N$.
Hence $(Y^\bullet,\upsilon)$ is a quasi-lift of $(Z^\bullet,\zeta)$ over $A'$, which we denote by
$(Y_\kappa^\bullet,\upsilon_\kappa)$.

Let $\Xi$ and $\Upsilon$ be as in the statement of part (ii)(b).
We need to show that the map
\begin{eqnarray}
\label{eq:TheMap}
\Xi &\to &\Upsilon\\
\eta_\kappa &\mapsto& [(Y_\kappa^\bullet,\upsilon_\kappa)]\nonumber
\end{eqnarray}
is a bijection. 
This map is well-defined, since, as seen at the end of Definition \ref{def:pushout}, 
$\eta_\kappa=\eta_{\kappa'}$ 
if and only if $\kappa\circ a_T=\kappa'\circ a_T$ in $D^-(B')$
and the construction in Definition \ref{def:pushout} shows that $\kappa\circ a_T$
determines the local isomorphism class $[(Y_\kappa^\bullet,\upsilon_\kappa)]$.

We first prove that $(\ref{eq:TheMap})$ is surjective. Given a quasi-lift
$(Y^\bullet,\upsilon)$ of $(Z^\bullet,\zeta)$ over $A'$, 
we may assume by Lemma \ref{lem:takethisout} that there is a short exact sequence
$\xi:0\to X^\bullet\xrightarrow{u_\xi} Y^\bullet\xrightarrow{v_\xi} Z^\bullet\to 0$ in $C^-(B')$ as in 
Definition $\ref{def:gabberclasses}$ and that
the isomorphism $\upsilon:A\hat{\otimes}^{\LL}_{A'}Y^\bullet \to Z^\bullet$ is the
homomorphism in $D^-(B)$ from Lemma  $\ref{lem:gabberlemma1}$ relative to $\xi$.
Since $\upsilon$ is an isomorphism in $D^-(B)$, it follows from Lemma \ref{lem:gabberlemma1} that
the homomorphism $f_\xi:\HH_I^{-1}(A\hat{\otimes}_{A'}P^{\bullet,\bullet})\to X^\bullet$ 
is an isomorphism in $D^-(B)$. Letting $h_\xi=\iota\circ f_\xi^{-1}$, it follows that $(\xi,h_\xi)$
represents a class $\eta_\xi$ in $\tilde{F}_{II}^0$.
By Lemma \ref{lem:gabberconstruct}(i), there exists 
$\kappa\in\mathrm{Hom}_{D^-(B)}(A\hat{\otimes}_{A'}T^\bullet,J\hat{\otimes}_AZ^\bullet)$ 
such that $\kappa\circ\sigma=h_\xi\circ f_\xi=\iota$ and $\eta_\xi=\eta_{\kappa}$ in $\tilde{F}_{II}^0$.
Following the definition of $(Y_\kappa^\bullet,\upsilon_\kappa)$, one sees that
$(Y_\kappa^\bullet,\upsilon_\kappa)$ and $(Y^\bullet,\upsilon)$ are locally isomorphic
quasi-lifts of $(Z^\bullet,\zeta)$ over $A'$. 

To prove that $(\ref{eq:TheMap})$ is injective, let $\eta_\kappa,\eta_{\kappa'}\in \Xi$ 
be such that
$(Y_\kappa^\bullet,\upsilon_\kappa)$ and $(Y^\bullet_{\kappa'},\upsilon_{\kappa'})$
are locally isomorphic quasi-lifts of $(Z^\bullet,\zeta)$ over $A'$. This means that
there exists an isomorphism $\theta:Y_\kappa^\bullet\to Y_{\kappa'}^\bullet$ in $D^-(B')$ with
$\upsilon_{\kappa'}\circ (A\hat{\otimes}^{\LL}_{A'}\theta) = \upsilon_\kappa$.
Consider the triangle in $D^-(B')$
\begin{equation}
\label{eq:undnocheins}
J\hat{\otimes}_{A'}^{\LL}Y_{\kappa}^\bullet\to A'\hat{\otimes}_{A'}^{\LL}Y_{\kappa}^\bullet\to 
A\hat{\otimes}_{A'}^{\LL}Y_{\kappa}^\bullet \xrightarrow{\eta^{\LL}_{\kappa}} 
J\hat{\otimes}_{A'}^{\LL}Y_{\kappa}^\bullet[1],
\end{equation}
which is associated to the short exact sequence obtained by 
applying the functor $-\hat{\otimes}_{A'}^{\LL}Y_{\kappa}^\bullet$ to the sequence
$0\to J \to A'\to A\to 0$.
Using the definition of $\upsilon_{\kappa}$, one sees that
$\eta_{\kappa}\circ\upsilon_{\kappa}=(J\hat{\otimes}_A^{\LL}
\upsilon_{\kappa}[1])\circ \eta^{\LL}_{\kappa}$.
On replacing $\kappa$ by $\kappa'$, one obtains a similar equation relating  $\eta_{\kappa'}$
and $\eta^{\LL}_{\kappa'}$. Since
$(J\hat{\otimes}_{A'}^{\LL}\theta[1])\circ \eta^{\LL}_{\kappa}
=\eta^{\LL}_{\kappa'}\circ (A\hat{\otimes}_{A'}^{\LL}\theta)$, this implies that 
$\eta_\kappa=\eta_{\kappa'}$.

The first statement of part (ii)(c) follows from part (ii)(b) above and from Lemma \ref{lem:gabberfilter}(iv).
For the second statement of part (ii)(c), one notes that since 
$\omega=\iota[2]\circ\alpha_2[1]\circ\alpha_1=0$ and
$\iota$ and $\alpha_1$ are isomorphisms in $D^-(B)$, one has  $\alpha_2=0$. 
Replacing $\alpha_2=0$ in the triangle $(\ref{eq:alpha2})$ and applying the
functor $\mathrm{Hom}_{D^-(B)}(-,J\hat{\otimes}_AZ)$, one obtains 
a short exact sequence of abelian groups
$$0\to 
\mathrm{Hom}(D^\bullet, J\hat{\otimes}_AZ^\bullet)\xrightarrow{\tau^*}
\mathrm{Hom}( A\hat{\otimes}_{A'}T^\bullet, J\hat{\otimes}_AZ^\bullet)\xrightarrow{\sigma^*}
\mathrm{Hom}( \HH_I^{-1}(A\hat{\otimes}_{A'}P^{\bullet,\bullet}), J\hat{\otimes}_AZ^\bullet)\to 0,$$
where $\mathrm{Hom}$ stands for $\mathrm{Hom}_{D^-(B)}$.
Since $\mathrm{Hom}_{D^-(B)}(D^\bullet, J\hat{\otimes}_AZ^\bullet)\cong
\mathrm{Ext}^1_{D^-(B)}(Z^\bullet, J\hat{\otimes}_AZ^\bullet)$, part (ii)(c) follows.

To prove part (ii)(d), we show that for all $p$ the inflation map
$$\mathrm{Inf}_{B}^{B'}:\mathrm{Ext}^p_{D^-(B)}(Z^\bullet,J\hat{\otimes}_AZ^\bullet)
\to \mathrm{Ext}^p_{D^-(B')}(Z^\bullet,J\hat{\otimes}_AZ^\bullet)$$
is injective, which implies that $E_\infty^{p,0}=E_2^{p,0}=
\mathrm{Ext}^p_{D^-(B)}(Z^\bullet,J\hat{\otimes}_AZ^\bullet)$.
Let $(Y^\bullet,\upsilon)$ be a quasi-lift of
$(Z^\bullet,\zeta)$ such that $Y^\bullet$ is a bounded above complex of topologically free
pseudocompact $B'$-modules. Let $a_Y:Y^\bullet\to A\hat{\otimes}_{A'}Y^\bullet$ be the 
natural homomorphism in $C^-(B')$, and let 
$\pi_P:\mathrm{Tot}(P^{\bullet,\bullet})\to Z^\bullet$ be the quasi-isomorphism in 
$C^-(B')$ from Definition \ref{def:spectralseqA}. Then $g=\pi_P^{-1}\circ \upsilon \circ a_Y\in
\mathrm{Hom}_{D^-(B')}(Y^\bullet,\mathrm{Tot}(P^{\bullet,\bullet}))=
\mathrm{Hom}_{K^-(B')}(Y^\bullet,\mathrm{Tot}(P^{\bullet,\bullet}))$. Suppose
$f\in \mathrm{Ext}^p_{D^-(B)}(Z^\bullet,J\hat{\otimes}_AZ^\bullet)$ and $\mathrm{Inf}_{B}^{B'}(f)=0$
in $\mathrm{Ext}^p_{D^-(B')}(Z^\bullet,J\hat{\otimes}_AZ^\bullet)$. Since 
$A\hat{\otimes}_{A'}Y^\bullet$ is a bounded above complex of topologically free 
pseudocompact $B$-modules, it follows that
$f\circ \upsilon\in \mathrm{Hom}_{K^-(B)}(A\hat{\otimes}_{A'}Y^\bullet,J\hat{\otimes}_AZ^\bullet[p])$. 
Since $\mathrm{Inf}_{B}^{B'}(f)=0$ and $\pi_P$ is a quasi-isomorphism in $C^-(B')$, it follows that  
$F=f\circ \pi_P:\mathrm{Tot}(P^{\bullet,\bullet})\to J\hat{\otimes}_AZ^\bullet[p]$
is homotopic to zero in $C^-(B')$. 
Then $(f\circ \upsilon) \circ a_Y = (f\circ \pi_P)\circ (\pi_P^{-1}\circ \upsilon\circ a_Y)
=(f\circ \pi_P)\circ g = F\circ g$, which implies that $(f\circ \upsilon)\circ a_Y$ is homotopic to
zero in $C^-(B')$.
Applying $A\hat{\otimes}_{A'}-$ shows that $f\circ \upsilon$ is homotopic to zero in $C^-(B)$.
Since $\upsilon$ is an isomorphism in $D^-(B)$ and
$\mathrm{Hom}_{K^-(B)}(A\hat{\otimes}_{A'}Y^\bullet,J\hat{\otimes}_AZ^\bullet[p])=
\mathrm{Hom}_{D^-(B)}(A\hat{\otimes}_{A'}Y^\bullet,J\hat{\otimes}_AZ^\bullet[p])$,
it follows that $f=0$ in $D^-(B)$ which proves part (ii)(d).
\end{proof}

\begin{rem}
\label{rem:notalwaysdegenerate}
If $\omega=\omega(Z^\bullet,A')\neq 0$, i.e. if there is no quasi-lift of $(Z^\bullet,\zeta)$ over $A'$, then
$E_\infty^{1,0}$ is a proper quotient of $E_2^{1,0}$ in general. 
For example, let $k=\mathbb{Z}/2$, $A=k[t]/(t^4)$, $A'=k[t]/(t^6)$ and let 
$\pi:A'\to A$ be the natural surjection. Let $G$ be the trivial group,
so that $B=A$ and $B'=A'$. Suppose $V^\bullet = k\xrightarrow{0} k \xrightarrow{0} k$
and $Z^\bullet = A\xrightarrow{t^3} A \xrightarrow{t} A$ are both concentrated in degrees $-3,-2,-1$.
Then $J\hat{\otimes}_AZ^\bullet=A/t^2A \xrightarrow{0} A/t^2A\xrightarrow{t} A/t^2A$
is also concentrated in degrees $-3,-2,-1$. 
We now show that the inflation map 
$$\mathrm{Inf}_{A}^{A'}:
\mathrm{Ext}^1_{D^-(A)}(Z^\bullet,J\hat{\otimes}_AZ^\bullet)
\to \mathrm{Ext}^1_{D^-(A')}(Z^\bullet,J\hat{\otimes}_AZ^\bullet)$$
is not injective.  This implies that $E_\infty^{1,0}$ is a proper quotient of $E_2^{1,0}  = \mathrm{Ext}^1_{D^-(A)}(Z^\bullet,J\hat{\otimes}_AZ^\bullet)$,
since $A=B$ and $A'=B'$ and $E_\infty^{1,0}$ is isomorphic to the image of $\mathrm{Inf}_{A}^{A'}$. Consider the map of complexes $f:Z^\bullet \to
J\hat{\otimes}_AZ^\bullet[1]$ in $C^-(A)$ where $f^j=0$ for all $j\neq -3$ and
$f^{-3}:Z^{-3}=A \to A/t^2A=J\hat{\otimes}_AZ^{-2}$
sends $1\in A$ to $t\in A/t^2A$. Then $f$ is not homotopic to zero which implies that
$f$ is not zero in $D^-(A)$ since the terms of $Z^\bullet$ are topologically free 
pseudocompact $A$-modules. To show that $\mathrm{Inf}_{A}^{A'}(f)=0$ in $D^-(A')$,
we construct a suitable bounded above complex $Q^\bullet$ of topologically free pseudocompact  
$A'$-modules 
together with a quasi-isomorphism $s_Q:Q^\bullet\to Z^\bullet$. Namely, let
$$Q^\bullet: \quad\cdots \;
(A')^2 \xrightarrow{\footnotesize \left(\begin{array}{cc}t^3&0\\0&t^5\end{array}\right)}
(A')^2 \xrightarrow{\footnotesize \left(\begin{array}{cc}t^3 & 0\\0&0\\0&t\end{array}\right)}
(A')^3\xrightarrow{\footnotesize \left(\begin{array}{ccc}0&1&0\\0&0&t^5\end{array}\right)} 
(A')^2 \xrightarrow{\footnotesize \left(\begin{array}{cc}0&t\end{array}\right)} A' $$
be concentrated in degrees $\le -1$, and let $s_Q=\left(s_Q^j\right)$ where
$s_Q^j=0$ for $j\not\in \{-3,-2,-1\}$ and
$s_Q^{-1}=\pi$, $s_Q^{-2}=(t^3\,\pi,\pi)$, $s_Q^{-3}=(t\,\pi,\pi,0)$.
It follows that $f\circ s_Q$ is homotopic to zero, and hence equal to zero in
$D^-(A')$, by defining $h^j:Q^j\to J\hat{\otimes}_AZ^j[1]=J\hat{\otimes}_AZ^{j+1}$ by
$h^j=0$ for all $j\neq -2$ and $h^{-2}=(t\,\overline{\pi},0)$ where $\overline{\pi}:A'\to A/t^2$ is the 
natural surjection. Since $s_Q$ is an isomorphism in $D^-(A')$, this implies that
$\mathrm{Inf}_{A}^{A'}(f) = (f\circ s_Q)\circ (s_Q)^{-1}$ is zero in $D^-(A')$.
\end{rem}


\subsection{Proof of Lemma $\ref{lem:aut}$}
\label{ss:aut}

As in the statement of Lemma \ref{lem:aut}, suppose that $(Y^\bullet,\upsilon)$ is a 
quasi-lift of $(Z^\bullet,\zeta)$ over $A'$. 
Using Theorem \ref{thm:derivedresult}, we may assume that the
terms of $Y^\bullet$ are projective pseudocompact $B'$-modules.
Consider the triangle in $D^-(B')$
\begin{equation}
\label{eq:triang}
A\hat{\otimes}_{A'}Y^\bullet[-1]\xrightarrow{a} J\hat{\otimes}_{A'}Y^\bullet
\xrightarrow{b}  Y^\bullet\xrightarrow{c}
A\hat{\otimes}_{A'} Y^\bullet,
\end{equation}
which is associated to the short exact sequence obtained by 
applying the functor $-\hat{\otimes}_{A'}^{\LL}Y^\bullet=-\hat{\otimes}_{A'}Y^\bullet$ 
to the sequence $0\to J \to A'\to A\to 0$.
Applying the functor $\mathrm{Hom}_{D^-(B')}(Y^\bullet,-)$ to the triangle 
$(\ref{eq:triang})$, one obtains a long exact Hom sequence
\begin{equation}
\label{eq:homseq}
\mbox{\small $\xymatrix {
\cdots\ar[r] &\mathrm{Hom}_{D^-(B')}(Y^\bullet,Y^\bullet[-1])\ar[r]&
\mathrm{Hom}_{D^-(B')}(Y^\bullet,A\hat{\otimes}_{A'}Y^\bullet[-1])\ar[dll]_(.55){(a)_*}\\
\mathrm{Hom}_{D^-(B')}(Y^\bullet,J\hat{\otimes}_{A'}Y^\bullet)\ar[r]^(.58){(b)_*} & 
\mathrm{Hom}_{D^-(B')}(Y^\bullet,Y^\bullet) \ar[r]^{(c)_*}&
\mathrm{Hom}_{D^-(B')}(Y^\bullet,A\hat{\otimes}_{A'}Y^\bullet)\ar[r]& \cdots
}$}
\end{equation}
Using that $\mathrm{Image}((b)_*)$ is a two-sided ideal with square $0$ in
$\mathrm{Hom}_{D^-(B')}(Y^\bullet,Y^\bullet)$, one sees that
\begin{equation}
\label{eq:ouchie1}
\mathrm{Aut}^0_{D^-(B')}(Y^\bullet)\cong 
\mathrm{Image}((b)_*)\cong
\mathrm{Hom}_{D^-(B')}(Y^\bullet,J\hat{\otimes}_{A'}Y^\bullet)/\mathrm{Image}((a)_*).
\end{equation}
Since $Y^\bullet$  is a bounded above complex of projective 
pseudocompact $B'$-modules, 
$c$ induces an isomorphism 
$(c)^*: \mathrm{Hom}_{D^-(B)}(A\hat{\otimes}_{A'}Y^\bullet,W^\bullet)
\xrightarrow{\cong} \mathrm{Hom}_{D^-(B')}(Y^\bullet,W^\bullet)$
for all complexes $W^\bullet$ in $C^-(B)$.
Thus $(\ref{eq:ouchie1})$ implies 
\begin{equation}
\label{eq:ouchie2}
\mathrm{Aut}^0_{D^-(B')}(Y^\bullet)\cong 
\mathrm{Hom}_{D^-(B)}(A\hat{\otimes}_{A'}Y^\bullet,J\hat{\otimes}_{A'}Y^\bullet)/
\mathrm{Image}(\mathrm{Ext}^{-1}_{D^-(B)}(A\hat{\otimes}_{A'}Y^\bullet,A\hat{\otimes}_{A'}Y^\bullet)),
\end{equation}
where $\mathrm{Image}(\mathrm{Ext}^{-1}_{D^-(B)}(A\hat{\otimes}_{A'}Y^\bullet,
A\hat{\otimes}_{A'}Y^\bullet))$ is the image of
$\mathrm{Hom}_{D^-(B)}(A\hat{\otimes}_{A'}Y^\bullet,A\hat{\otimes}_{A'}Y^\bullet[-1])$
in $\mathrm{Hom}_{D^-(B)}(A\hat{\otimes}_{A'}Y^\bullet,J\hat{\otimes}_{A'}Y^\bullet)$
under the composition $((c)^*)^{-1}\circ (a)_*\circ (c)^*$.
Since $\upsilon$ induces an isomorphism
$J\hat{\otimes}_A\upsilon: J\hat{\otimes}_{A'}Y^\bullet\to J\hat{\otimes}_AZ^\bullet$ 
in $D^-(B)$,  Lemma \ref{lem:aut} follows.


\subsection{Proof of Proposition $\ref{prop:compare}$}
\label{ss:compare}

As in the statement of Proposition \ref{prop:compare}, we assume the notation of
\S\ref{ss:naive} and Theorem \ref{thm:obstructions}. For simplicity, we 
identify $A\hat{\otimes}_{A'}Y^j=\tilde{Z}^j$ for all $j$ and we
identify $Z^\bullet$ with the truncation $\mathrm{Trunc}_{-p_0}(\tilde{Z}^\bullet)$
of $\tilde{Z}^\bullet$ at $-p_0$ which is obtained from
$\tilde{Z}^\bullet$ by replacing $\tilde{Z}^{-p_0}$ by 
$\tilde{Z}^{-p_0}/\mathrm{Image}(d_{\tilde{Z}}^{-p_0-1})$ and $\tilde{Z}^j$ be $0$ for all $j<-p_0$.
Let $s_Z:\tilde{Z}^\bullet\to Z^\bullet$ be the resulting quasi-isomorphism where
$s_Z^{-p_0}:\tilde{Z}^{-p_0}\to \tilde{Z}^{-p_0}/\mathrm{Image}(d_{\tilde{Z}}^{-p_0-1})=Z^{-p_0}$
is the natural surjection.

To be able to compare the two lifting obstructions $\omega(Z^\bullet, A')$ and 
$\omega_0(Z^\bullet,Z')$, we define a particular $P^{0,\bullet}$ and a particular
$\epsilon:P^{0,\bullet}\to Z^\bullet$ as in Definition \ref{def:gabber} by using 
$(Y^j,c_Y^j)$ from \S\ref{ss:naive}. By following Grothendieck's construction
discussed in Remark \ref{rem:projectivegrothendieck}, we define
$$P^{0,0}=Y^{-1}, \quad P^{0,-j}=Y^{-j-1}\oplus Y^{-j} \;(1\le j\le p_0-1), \quad P^{0,-p_0}=Y^{-p_0}$$
and the differentials as
$$d_{P^{0,\bullet}}^{-1}=(-c_Y^{-2},1), \quad d_{P^{0,\bullet}}^{-j}=
\left(\begin{array}{cc}-c_Y^{-j-1}&1\\-c_Y^{-j}\circ c_Y^{-j-1} & c_Y^{-j}\end{array}\right)
\;(2\le j\le p_0-1), \quad d_{P^{0,\bullet}}^{-p_0}=\left(\begin{array}{c}1\\ c_Y^{-p_0}\end{array}\right).$$
Moreover, we define $\epsilon$ by
$$\epsilon^0=0,\quad \epsilon^{-j} = (0,a_Y^{-j}) \; (1\le j\le p_0-1),\quad
\epsilon^{-p_0}=s_Z^{-p_0}\circ a_Y^{-p_0}$$
where $a_Y^{-j}:Y^{-j}\to A\hat{\otimes}_{A'}\tilde{Z}^{-j}$ is the natural surjection for $1\le j\le p_0$.

Following Definition \ref{def:gabber}, one now computes explicitly $T^\bullet=\mathrm{Ker}(\epsilon)$ 
and $D^\bullet=\mathrm{Ker}(A\hat{\otimes}_{A'}\epsilon)$ and identifies
$\HH_I^{-1}(A\hat{\otimes}_{A'}P^{\bullet,\bullet})$ with the kernel of the
surjection $\tau:A\hat{\otimes}_{A'}T^\bullet\to D^\bullet$. This computation shows that
$\HH_I^{-1}(A\hat{\otimes}_{A'}P^{\bullet,\bullet})$ can be identified with the truncation 
$\mathrm{Trunc}_{-p_0}(JY^\bullet)$ of the
complex $JY^\bullet$ at $-p_0$ which is obtained from $JY^\bullet$ by replacing $JY^{-p_0}$ by 
$JY^{-p_0}/\mathrm{Image}(d_{JY}^{-p_0-1})$ and $JY^j$ by $0$ for all $j<-p_0$.

We use the definition of $\omega(Z^\bullet,A')=d_2^{0,1}(\iota)$ in Theorem \ref{thm:obstructions}
which is by Lemma \ref{lem:oyoyoy!} equal to 
$$\omega(Z^\bullet,A') = \iota[2]\circ\alpha_2[1]\circ\alpha_1$$
where $\alpha_1$ and $\alpha_2$ are the homomorphisms in $D^-(B)$ which occur in the
triangles $(\ref{eq:alpha1})$ and $(\ref{eq:alpha2})$ in Definition \ref{def:gabber}. 
Using the mapping cones of the homomorphisms
$\delta_D:D^\bullet \to A\hat{\otimes}_{A'}T^\bullet$ and 
$\sigma:\HH_I^{-1}(A\hat{\otimes}_{A'}P^{\bullet,\bullet})\to A\hat{\otimes}_{A'}T^\bullet$ in 
$(\ref{eq:alpha1})$ and $(\ref{eq:alpha2})$, respectively, one sees that one can express 
$\alpha_2[1]\circ\alpha_1\in
\mathrm{Hom}_{D^-(B)}(Z^\bullet,\HH_I^{-1}(A\hat{\otimes}_{A'}P^{\bullet,\bullet})[2])$
as
\begin{equation}
\alpha_2[1]\circ\alpha_1= s_J[2]\circ \tilde{\omega}\circ (s_Z)^{-1}
\end{equation}
where $(s_Z)^{-1}$ is the inverse in $D^-(B)$ of the quasi-isomorphism $s_Z$, $\tilde{\omega}$
is as in $(\ref{eq:yuckyuck})$ and 
\begin{equation}
\label{eq:sJ}
s_J: JY^\bullet \to \mathrm{Trunc}_{-p_0}(JY^\bullet)=\HH_I^{-1}(A\hat{\otimes}_{A'}P^{\bullet,\bullet})
\end{equation}
is the quasi-isomorphism in $C^-(B)$ resulting from truncation such that 
$s_J^{-p_0}$ is the natural surjection.
It follows that
$$\omega(Z^\bullet,A') = \iota[2]\circ\alpha_2[1]\circ\alpha_1=
(\iota\circ s_J)[2] \circ \tilde{\omega} \circ (s_Z)^{-1}$$
in $D^-(B)$, which proves the first part of Proposition \ref{prop:compare}.

For the second part of Proposition \ref{prop:compare}, let $(Y_0^\bullet,\upsilon_0)$ and
$({Y'}^\bullet,\upsilon')$ be two quasi-lifts of $(Z^\bullet,\zeta)$ over $A'$.
Without loss of generality, we can assume that $Y_0^j=Y^j={Y'}^j$ for all $j$, 
by using a fixed versal deformation $(U^\bullet,\phi_U)$ of $V^\bullet$ over $R=R(G,V^\bullet)$
such that $U^\bullet$ is concentrated in degrees $\le -1$ and all terms of $U^\bullet$
are topologically free pseudocompact $R[[G]]$-modules. 
In particular, this implies that $JY_0^\bullet=JY^\bullet=J{Y'}^\bullet$ and
$\tilde{Z}^\bullet = A\hat{\otimes}_{A'}Y_0^\bullet = A\hat{\otimes}_{A'}{Y'}^\bullet$  
as complexes in $C^-(B)$. We have short exact sequences in $C^-(B')$ of the form
$0\to JY^\bullet \to Y_0^\bullet \xrightarrow{a_{Y_0}} \tilde{Z}^\bullet  \to 0$ and
$0\to JY^\bullet \to {Y'}^\bullet \xrightarrow{a_{Y'}} \tilde{Z}^\bullet \to 0$.
Truncating these complexes at $-p_0$ in the same way as we have done several times above
and using that we have assumed that $Z^\bullet=\mathrm{Trunc}_{-p_0}(\tilde{Z}^\bullet)$,
we obtain short exact sequences in $C^-(B')$ of the form
\begin{eqnarray}
\label{eq:firstone}
\xi_0:&0\to \mathrm{Trunc}_{-p_0}(JY^\bullet) \to \mathrm{Trunc}_{-p_0}(Y_0^\bullet) 
\xrightarrow{\mathrm{Trunc}_{-p_0}(a_{Y_0})} Z^\bullet  \to 0,&\\
\label{eq:secondone}
\xi':&0\to \mathrm{Trunc}_{-p_0}(JY^\bullet) \to \mathrm{Trunc}_{-p_0}({Y'}^\bullet) 
\xrightarrow{\mathrm{Trunc}_{-p_0}(a_{Y'})} Z^\bullet  \to 0.&
\end{eqnarray}
Since we have seen above that $\HH_I^{-1}(A\hat{\otimes}_{A'}P^{\bullet,\bullet})$ can be identified 
with $\mathrm{Trunc}_{-p_0}(JY^\bullet)$, letting $h_{\xi_0}=h_{\xi'}=\iota$ we arrive
at the class $\eta_{\xi_0}$ (resp. $\eta_{\xi'}$) in 
$\tilde{F}_{II}^0=F_{II}^0\,\HH^1(\mathrm{Tot}(L^{\bullet,\bullet}))$ represented by
$(\xi_0,h_{\xi_0})$ (resp. $(\xi',h_{\xi'})$) as described in Definition \ref{def:gabberclasses}.
It follows from Lemma \ref{lem:gabberlift}, parts (ii)(a) and (ii)(b), that $\eta_{\xi_0}$ (resp. $\eta_{\xi'}$)
is the class in $\tilde{F}_{II}^0=F_{II}^0\,\HH^1(\mathrm{Tot}(L^{\bullet,\bullet}))$ corresponding
to the local isomorphism class of $(Y_0^\bullet,\upsilon_0)$ (resp. $({Y'}^\bullet,\upsilon')$).

Following Definition \ref{def:pushout}, we now find homomorphisms 
$\lambda_0:P^{0,\bullet}\to \mathrm{Trunc}_{-p_0}(Y_0^\bullet)$
and $\lambda':P^{0,\bullet}\to \mathrm{Trunc}_{-p_0}({Y'}^\bullet)$ in $C^-(B')$ such that 
$\mathrm{Trunc}_{-p_0}(a_{Y_0})\circ \lambda_0=\epsilon=
\mathrm{Trunc}_{-p_0}(a_{Y'})\circ\lambda'$. Namely,
$$\lambda_0^0=0,\quad \lambda_0^{-j} = (d_{Y_0}^{-j}-c_Y^{-j},1)\; (2\le j\le p_0-1),
\quad \lambda_0^{-p_0} = e_0$$
where $e_0:Y^{-p_0}\to Y^{-p_0}/\mathrm{Image}(d_{Y_0}^{-p_0-1})$ is the natural surjection.
Similarly, we define $\lambda'$ by replacing $d_{Y_0}$ by $d_{Y'}$ and $e_0$ by
the natural surjection $e':Y^{-p_0}\to Y^{-p_0}/\mathrm{Image}(d_{Y'}^{-p_0-1})$.
Letting $\tilde{\lambda_0}$ (resp. $\tilde{\lambda'}$) be the restriction of $\lambda_0$
(resp. $\lambda'$) to $T^\bullet$, we obtain by using triangle diagrams in $D^-(B')$
similarly to $(\ref{eq:triangle})$ that 
\begin{equation}
\label{eq:betteretas}
\eta_{\xi_0} = \iota [1]\circ \tilde{\lambda_0}[1]\circ \eta_T\quad\mbox{and} \quad
\eta_{\xi'} = \iota [1]\circ \tilde{\lambda'}[1]\circ \eta_T
\end{equation}
in $D^-(B')$, 
where $\eta_T$ is the connecting homomorphism in the top row of $(\ref{eq:triangle})$.
Using the explicit computations of $T^\bullet$, $D^\bullet$ and
$\tau:A\hat{\otimes}_{A'}T^\bullet\to D^\bullet$ as before, one sees that there exists a
quasi-isomorphism $s_D:\tilde{Z}^\bullet[-1]\to D^\bullet$ in $C^-(B)$,
which is independent of the local isomorphism classes of $(Y_0^\bullet,\upsilon_0)$ and
$({Y'}^\bullet,\upsilon')$,  such that
$$\tilde{\lambda'}-\tilde{\lambda_0} = s_J\circ \tilde{\beta}_{Y'}[-1]\circ 
(s_D)^{-1}\circ (\tau\circ a_T)$$
in $D^-(B')$ where $s_J$ is as in $(\ref{eq:sJ})$, $\tilde{\beta}_{Y'}$ is as in $(\ref{eq:principali})$
and $a_T:T^\bullet\to A\hat{\otimes}_{A'}T^\bullet$ is the natural surjection. Note that
$\tau\circ a_T:T^\bullet \to D^\bullet$ is a quasi-isomorphism in $C^-(B')$. 
Hence
$$\eta_{\xi'}-\eta_{\xi_0} =  \iota [1]\circ (\tilde{\lambda_0}-\tilde{\lambda'})[1]\circ \eta_T
=(\iota\circ s_J)[1]\circ \tilde{\beta}_{Y'}\circ \left( (s_D)^{-1}[1]\circ (\tau\circ a_T)[1]\circ \eta_T\right)$$
in $D^-(B')$, completing the proof of Proposition \ref{prop:compare}.


\section{Quotients by pro-$\ell'$ groups}
\label{s:prop}
\setcounter{equation}{0}

In this section, we give an application of the obstructions to lifting quasi-lifts as
determined in \S\ref{s:obstruct}.
As we have assumed throughout this paper, the field $k$ has positive characteristic $\ell$, 
and $V^\bullet$ is
a complex in $D^-(k[[G]])$ that has only finitely many non-zero cohomology groups, all of which
have finite $k$-dimension. Without loss of generality, we may assume that $\HH^i(V^\bullet)=0$
unless $-p_0\leq i \leq -1$.

\begin{rem}
\label{rem:prop}
Suppose there is a short exact sequence of profinite groups
\begin{equation}
\label{eq:sigh1}
1\to K\to G \to \Delta \to 1,
\end{equation}
where $K$ is a closed normal subgroup which is a pro-$\ell'$ group, i.e. the projective limit
of finite groups that have order prime to $\ell$. Let $R$ be an object in $\hat{\mathcal{C}}$, 
and suppose $M$ is a  projective
pseudocompact $R[[\Delta]]$-module. Then the inflation
$\mathrm{Inf}_{\Delta}^G\,M$ is a projective pseudocompact $R[[G]]$-module.
\end{rem}

\begin{prop}
\label{prop:prop}
Suppose $G$ and $\Delta$ are as in Remark $\ref{rem:prop}$,
$G$ has finite pseudocompact cohomology, and $V^\bullet$ is isomorphic to the inflation
$\mathrm{Inf}_\Delta^G\,V_\Delta^\bullet$ of a bounded above complex
$V_\Delta^\bullet$ of pseudocompact $k[[\Delta]]$-modules. Then 
 the two deformation functors 
$\hat{F}^G=\hat{F}^G_{V^\bullet}$ and $\hat{F}^\Delta=\hat{F}^\Delta_{V_\Delta^\bullet}$ which are defined according
to Definition $\ref{def:functordef}$ are naturally isomorphic. In consequence,
 $R(G,V^\bullet)\cong R(\Delta,V_\Delta^\bullet)$
and $(U(G,V^\bullet),\phi_U)\cong
(\mathrm{Inf}_{\Delta}^G\,U(\Delta,V_\Delta^\bullet),\mathrm{Inf}_{\Delta}^G\,\phi_U)$.
\end{prop}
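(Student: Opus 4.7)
The plan is to define the natural transformation $\tau\colon \hat{F}^\Delta \Rightarrow \hat{F}^G$ by inflation, sending the class of $(M_\Delta^\bullet,\phi_\Delta)$ to the class of $(\mathrm{Inf}_\Delta^G M_\Delta^\bullet,\mathrm{Inf}_\Delta^G\phi_\Delta)$, using $\mathrm{Inf}_\Delta^G V_\Delta^\bullet = V^\bullet$ and the canonical identification $k\hat{\otimes}^{\LL}_R\mathrm{Inf}_\Delta^G M_\Delta^\bullet \cong \mathrm{Inf}_\Delta^G(k\hat{\otimes}^{\LL}_R M_\Delta^\bullet)$. Well-definedness is immediate: inflation is exact, commutes with $\hat{\otimes}_R$, does not alter the underlying pseudocompact $R$-module structure (so finite pseudocompact $R$-tor dimension is preserved), and by Remark \ref{rem:prop} sends projective pseudocompact $R[[\Delta]]$-modules to projective pseudocompact $R[[G]]$-modules. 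The conclusion $R(G,V^\bullet)\cong R(\Delta,V_\Delta^\bullet)$ and the compatibility of versal deformations will follow from Theorem \ref{thm:bigthm} once $\tau$ is shown to be a natural isomorphism.

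The main technical step will be to show that for any $R\in\mathrm{Ob}(\hat{\mathcal{C}})$ and any bounded above complexes $M_\Delta^\bullet, N_\Delta^\bullet$ of pseudocompact $R[[\Delta]]$-modules, inflation induces a bijection on all Ext groups
\begin{equation*}
\mathrm{Ext}^i_{D^-(R[[\Delta]])}(M_\Delta^\bullet, N_\Delta^\bullet) \xrightarrow{\ \sim\ } \mathrm{Ext}^i_{D^-(R[[G]])}(\mathrm{Inf}_\Delta^G M_\Delta^\bullet, \mathrm{Inf}_\Delta^G N_\Delta^\bullet).
\end{equation*}
To prove this, I would pick a projective resolution $Q_\Delta^\bullet \to M_\Delta^\bullet$ in $C^-(R[[\Delta]])$; by Remark \ref{rem:prop}, $\mathrm{Inf}_\Delta^G Q_\Delta^\bullet$ is a projective resolution of $\mathrm{Inf}_\Delta^G M_\Delta^\bullet$ in $C^-(R[[G]])$. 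The elementary identity $\mathrm{Hom}_{R[[G]]}(\mathrm{Inf}_\Delta^G X,\mathrm{Inf}_\Delta^G Y) = \mathrm{Hom}_{R[[\Delta]]}(X,Y)$ (valid at the level of pseudocompact modules because $K$ acts trivially on both arguments) then shows both sides above are the cohomology of the same chain complex $\mathrm{Hom}^\bullet_{R[[\Delta]]}(Q_\Delta^\bullet, N_\Delta^\bullet)$. This full faithfulness immediately yields injectivity of $\tau_R$ on isomorphism classes, since any isomorphism in $D^-(R[[G]])$ between inflated complexes descends uniquely to $D^-(R[[\Delta]])$ and remains an isomorphism there (quasi-isomorphism being detected on cohomology, which is the same over $\Delta$ or $G$).

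For surjectivity of $\tau_R$, my plan is to induct on the length of $R$ when $R\in\mathrm{Ob}(\mathcal{C})$ and extend to general $R\in\mathrm{Ob}(\hat{\mathcal{C}})$ using the continuity of $\hat{F}$ from Theorem \ref{thm:bigthm}(i). The base case $R=k$ is trivial. For the inductive step, let $A'\twoheadrightarrow A$ be a small extension in $\mathcal{C}$ with square-zero kernel $J$ and let $(M^\bullet,\phi)\in\hat{F}^G(A')$; by induction, there is $(M_\Delta^\bullet,\phi_\Delta)\in\hat{F}^\Delta(A)$ with $\tau_A(M_\Delta^\bullet,\phi_\Delta) = A\hat{\otimes}^{\LL}_{A'}(M^\bullet,\phi)$. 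Applying the Ext isomorphism of paragraph two in degree $2$, the obstruction $\omega(M_\Delta^\bullet,A')\in\mathrm{Ext}^2_{D^-(A[[\Delta]])}(M_\Delta^\bullet,J\hat{\otimes}^{\LL}_A M_\Delta^\bullet)$ of Theorem \ref{thm:obstructions} corresponds to $\omega(\mathrm{Inf}_\Delta^G M_\Delta^\bullet,A')$, which vanishes because $M^\bullet$ is a lift; hence $\omega(M_\Delta^\bullet,A')=0$ and $(M_\Delta^\bullet,\phi_\Delta)$ lifts over $A'$. By Theorem \ref{thm:obstructions}(ii,iii) the set of local isomorphism classes of lifts is a torsor for $E_\infty^{1,0}=E_2^{1,0}=\mathrm{Ext}^1$ on each side; the inflation map of lifts is equivariant for the Ext$^1$-isomorphism, so this map of non-empty torsors is a bijection and some lift of $(M_\Delta^\bullet,\phi_\Delta)$ to $A'$ inflates to the given $(M^\bullet,\phi)$.

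The main obstacle I anticipate is establishing the naturality of the obstruction class $\omega(\cdot,A')$ and of the torsor action of $E_\infty^{1,0}$ on lifts under inflation, i.e.\ verifying that the image of $\omega(M_\Delta^\bullet,A')$ under the Ext$^2$-isomorphism coincides with $\omega(\mathrm{Inf}_\Delta^G M_\Delta^\bullet,A')$ and likewise for the principal homogeneous space structure. This will require tracking the constructions of Definitions \ref{def:spectralseqA}--\ref{def:spectralseqB1}, \ref{def:gabber} and \ref{def:iota} through the exact and projective-preserving functor $\mathrm{Inf}_\Delta^G$: a projective resolution $P^{\bullet,\bullet}\to M_\Delta^\bullet$ inflates to one over $R[[G]]$, the Cartan--Eilenberg data of Definition \ref{def:spectralseqB} transports functorially, the double complex $L^{\bullet,\bullet}$ and its filtration $(\ref{eq:filtration})$ are carried across, and so the spectral sequence $(\ref{eq:spectral})$, its boundary $d_2^{0,1}$, and the distinguished class $\iota$ are all transported. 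Once this naturality is in place, the inductive argument above runs without further difficulty.
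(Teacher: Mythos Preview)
Your proposal is correct and follows essentially the same route as the paper's proof. Both arguments use that $\mathrm{Inf}_\Delta^G$ is exact and sends projectives to projectives (Remark~\ref{rem:prop}) to transport the projective resolution $P_\Delta^{\bullet,\bullet}$, the Cartan--Eilenberg data, and hence the entire spectral sequence~(\ref{eq:spectral}) and its low-degree exact sequence~(\ref{eq:lowdegree}) from $\Delta$ to $G$; then Theorem~\ref{thm:obstructions} is invoked to match obstructions and torsors of lifts along a small extension $A'\twoheadrightarrow A$. Your added observation that inflation induces isomorphisms $\mathrm{Ext}^i_{D^-(R[[\Delta]])}(M_\Delta^\bullet,N_\Delta^\bullet)\cong\mathrm{Ext}^i_{D^-(R[[G]])}(\mathrm{Inf}\,M_\Delta^\bullet,\mathrm{Inf}\,N_\Delta^\bullet)$ via the identity $\mathrm{Hom}_{R[[G]]}(\mathrm{Inf}\,X,\mathrm{Inf}\,Y)=\mathrm{Hom}_{R[[\Delta]]}(X,Y)$ is a clean way to phrase what the paper obtains by identifying the two spectral sequences, and it gives you injectivity of $\tau_R$ directly; the paper leaves the inductive structure and the passage from ``bijection on local lifts'' to ``$\hat{F}^G\cong\hat{F}^\Delta$'' implicit, while you spell it out.
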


\begin{proof}
It follows from the definition of finite pseudocompact cohomology (see Definition
\ref{dfn:bndcoh}) and from Remark \ref{rem:prop} that $\Delta$ also has finite pseudocompact
cohomology. It will be enough to show that the two deformation functors 
$\hat{F}^G=\hat{F}^G_{V^\bullet}$ and $\hat{F}^\Delta=\hat{F}^\Delta_{V_\Delta^\bullet}$
are naturally isomorphic. 

Let $0\to J\to A'\to A\to 0$ be an extension of objects $A',A$ in $\hat{\mathcal{C}}$ with 
$J^2=0$, and let $(Z_\Delta^\bullet,\zeta_\Delta)$ be a quasi-lift of $V_\Delta^\bullet$ over $A$. 
By Theorem \ref{thm:derivedresult}, we may assume that the terms of $Z_\Delta^\bullet$
are  projective pseudocompact $A[[\Delta]]$-modules. Hence
$(Z^\bullet,\zeta)=(\mathrm{Inf}_\Delta^G\,Z_\Delta^\bullet,\mathrm{Inf}_\Delta^G\,\zeta_\Delta)$ is
a quasi-lift of $V^\bullet$ over $A$, and by Remark \ref{rem:prop} the terms of
$Z^\bullet$ are projective pseudocompact $A[[G]]$-modules. 
By Remark \ref{rem:dumbdumb}, we can truncate
$Z_\Delta^\bullet$, and hence $Z^\bullet=\mathrm{Inf}_\Delta^G\,Z_\Delta^\bullet$, so as
to be able to assume Hypothesis \ref{hypo:obstruct} for both $Z_\Delta^\bullet$ and $Z^\bullet$.
Moreover, in view of Remark \ref{rem:prop}, we can choose the projective resolutions 
$P^{\bullet,\bullet}\to Z^\bullet\to 0$ and 
$P_\Delta^{\bullet,\bullet}\to Z_\Delta^\bullet\to 0$ in Definition \ref{def:spectralseqA}
such that $P^{\bullet,\bullet}=\mathrm{Inf}_\Delta^G\, P_\Delta^{\bullet,\bullet}$  and
such that $P_\Delta^{0,\bullet}$, and hence $P^{0,\bullet}$, is acyclic. 
We can also arrange that the projective Cartan-Eilenberg resolutions $M^{\bullet,\bullet,\bullet}$
and $M_\Delta^{\bullet,\bullet,\bullet}$ in Definition \ref{def:spectralseqB} satisfy
$M^{\bullet,\bullet,\bullet}=\mathrm{Inf}_\Delta^G\, M_\Delta^{\bullet,\bullet,\bullet}$.
Following the definition of $(\ref{eq:lowdegree0})$ and $(\ref{eq:lowdegree})$, we see 
that the natural inflation homomorphisms from $\Delta$ to $G$ identify
the sequences of low degree terms for $G$ and for $\Delta$.
Using Theorem \ref{thm:obstructions}(i), it follows that the obstruction to lifting 
$(Z_\Delta^\bullet,\zeta_\Delta)$ over $A'$ vanishes if and only if 
the obstruction to lifting $(Z^\bullet,\zeta)$ over $A'$ vanishes. 
Using Theorem \ref{thm:obstructions}(ii), we see that  if these obstructions vanish, then
the set of all local isomorphism classes of quasi-lifts of $(Z^\bullet,\zeta)$ over $A'$ is in bijection
with the set of all local isomorphism classes of quasi-lifts of $(Z_\Delta^\bullet,\zeta_\Delta)$ over $A'$.
This implies Proposition \ref{prop:prop}.
\end{proof}


\end{document}